\RemoveFromHook{class/amsart/after}[firstaid/aliascounter]
\documentclass[10pt,dvipsnames]{amsart}

\usepackage{preamble}
\usepackage{tikz}

\title[Constructive Tchakaloff results and well-conditioned quadrature]{Constructive Tchakaloff results and well-conditioned quadrature through randomized least squares}
\author{Filip B\v{e}l\'ik}
\author{Akil Narayan}
\author{John Turnage}

\begin{document}
\begin{abstract}
We consider using randomized least squares to construct quadrature rules exact on a subspace of functions. Using new conditions that we call relative admissibility and reference weight concentration, we establish both that the quadrature weights from such a procedure concentrate close to their asymptotic values with prescribed and arbitrarily large probability, and this in turn provides useful finite-sample probabilistic bounds on the stability of the resulting quadrature rules for very general classes of possibly complex-valued functions. Our analysis both significantly generalizes the existing analysis of randomized least squares quadrature construction, and provides new bounds on stability for these rules. These results specialize to existence results for positive quadrature rules, when such rules can be theoretically expected. Because our procedures are formally algorithmic, our analysis is a substantive advance toward computationally constructive generalized Tchakaloff theorems.
\end{abstract}

\maketitle

{\footnotesize
\noindent \textbf{Keywords:}
Tchakaloff, quadrature, cubature, randomized least-squares, induced sampling

\noindent \textbf{MSC 2020:}
Primary 65D32; Secondary 41A55, 41A65, 65C20}

\section{Introduction}

Quadrature/cubature rules are standard ingredients in computational mathematics for approximating integrals or more general functionals \cite{Cools1997,Brass2011}. We consider the generation of such quadrature rules, exact on a prescribed subspace, through randomized least squares. 

Given an \(N\)-dimensional real- or complex-valued vector space \(V\subset L^2_\mu(D)\) and a functional
\(\mathcal L\in V^*\), we seek nodes \(x_m\in D\) and weights \(w_m\) such
that
\begin{equation}\label{eq:Q-L-exact}
  \mathcal{Q}_M(v) \coloneqq \sum_{m=1}^M w_m v(x_m)=\mathcal L(v),
    \qquad \forall\;v\in V.
\end{equation}
The desire for exactness on $V$ can be motivated through Lebesgue-type
inequalities that quantify accuracy of such $V$-exact rules through best
approximation numbers over $V$, cf. \eqref{eq:quad-lebesgue}. Such inequalities
are particularly powerful when the quadrature weights are real and positive,
as such weights avoid cancellation in the quadrature functional, give
immediate stability bounds in the uniform norm, and, for polynomial rules, are
closely connected with convergence of the resulting integral estimates on
smooth function classes.

In one dimension, positive quadrature with classical choices of $V$ is well understood.
Gaussian quadrature rules~\cite{Golub1969}, for example, are built from
orthogonal polynomial roots and have positive weights for broad classes of
positive measures; Clenshaw--Curtis quadrature~\cite{Clenshaw1960} is another
classical construction.  In several variables, especially on general domains
or for non-product measures, the situation is more difficult.
Tensor-product, sparse-grid constructions, and quasi-Monte Carlo rules 
\cite{Novak1999,Smolyak1963,Gerstner1998,dick_digital_2010,lemieux_monte_2009,niederreiter_random_1992}
are effective under suitable product structure, but each has drawbacks. For
example, product rules have large cardinality in high dimension, sparse grids
can have negative weights, and quasi-Monte Carlo rules are generally not exact
on subspaces and need not have high order accuracy for smooth functions. While
each of these approaches has practical computational underpinnings, each of
them also largely require product structure for $D$.

Methods to compute quadrature rules on unstructured domains are less common.
This search for positive quadrature rules can be motivated by Tchakaloff's
theorem, which addresses the existence of finitely supported positive measures
(i.e., positive quadrature rules) matching the moments of a given measure on a
finite-dimensional function space \cite{Tchakaloff1957,Davis1967,Bayer2006}.
Recent results have established the existence of positive quadrature in
substantial generality \cite{schafer_2025}, but do not provide a constructive
procedure to compute these rules.  One class of usually deterministic
computational methods is the family of optimization-based searches for cubature
rules \cite{Jakeman2017,Keshavarzzadeh2018,Keshavarzzadeh2018a} and
sample-based constructions~\cite{Bos2018b}.  These methods are useful in
practice, but positivity and exactness are not automatic consequences of the
approximation procedures used to construct them. The alternative that we
explore is the approach of constructing quadrature through randomized least
squares, as pioneered in \cite{migliorati2022stable}, although related work in
more specialized settings has appeared earlier \cite{Huybrechs2009,Wilson1970}.

The central questions we focus on are how to construct such $V$-exact rules
from random samples, how their stability depends on the sampling law, and when
it is possible to assert that positive-weight rules exist. In particular, we
establish new, relatively permissive assumptions that ensure that a trial-based
randomized least squares procedure can achieve quadrature rules whose weights
lie close to their asymptotic value, along with guarantees on the conditioning
of such quadrature rules. These results, specialized to the case when positive
quadrature is a reasonable goal, provide an algorithmic template for Tchakaloff
theorems. Like \cite{migliorati2022stable}, our randomized results hold with
probability \(1-\xi\), for arbitrary \(0<\xi<1\). In contrast to
\cite{migliorati2022stable}, our guarantees are more general for existence and
stability of quadrature rules. We investigate stable (and when appropriate,
positive) quadrature in situations when the results of
\cite{migliorati2022stable} are not applicable; in exchange for this
generality, our required sample size may be specified only implicitly through
a new  ``relative admissibility'' condition. In contrast to the abstract
existence results in \cite{schafer_2025}, our results are slightly less
general, applying in situations when we can excise certain pathologies; in
exchange we gain constructivity through an explicit least squares methodology.

\subsection{Summary of contributions}
Our scope is limited to the construction of quadrature rules through randomized
least squares, cf. \Cref{ssec:procedure}.  Within this scope, we make several
theoretical contributions under appropriate assumptions.
\begin{itemize}
  \item \textit{Quadrature weight concentration}: We show that a finite-$M$ quadrature rule exists whose weights concentrate around their ``reference'' values. See \Cref{thm:finite-sample-weight-approximation}.
  \item \textit{Stable quadrature rules}: The resulting quadrature rules have bounded condition numbers with respect to functions from appropriate weighted spaces. See \Cref{thm:stable-intermediate-rule}.
  \item \textit{Generalized Tchakaloff theorems}: When $(V,\mathcal{L})$ satisfy certain positivity-type properties, one expects that positive quadrature rules can be identified. (To contrast, if $\mathcal{L}$ were negative integration, one does not expect that positive weights would be possible or desirable.) Under these positivity properties, we combine the previous results with a deterministic Carath\'eodory--Steinitz pruning
argument to show the existence of positive quadrature rules satisfying \eqref{eq:Q-L-exact} with interpolatory size ($M \leq N$). See \Cref{thm:l2-tchakaloff,thm:l2-tchakaloff-conjugate}.
\end{itemize}
We emphasize that our results inherit the classical hallmark of randomness: The
dimension, $d = \dim D$, is not formally explicit, but rather implicit in some of
our quantitative constants. The previously mentioned assumptions we require to
conclude our results involve the Riesz representer $L \in V$ of $\mathcal{L}$,
and the $L^2$-Christoffel-type function of
$(V,\mu)$:
\begin{align*}
  \left\langle L, v\right\rangle_{L^2_\mu} \coloneqq \mathcal{L}(v) \;\;\forall\; v \in V, \hskip 20pt
  k(x) \coloneqq \sup_{v \in V\backslash\{0\}} \frac{|v(x)|^2}{\|v\|_{L^2_\mu}^2}.
\end{align*}
Our core assumption, ``relative admissibility'', informally requires that
$k(x)/L(x)^2$ can be large only with sufficiently small probability, when $x$
is distributed under the law of how samples are drawn. See
\Cref{ssec:assumption} for the precise conditions. (In contrast, the main
characterization of positive quadrature in \cite{migliorati2022stable} requires
the much stronger conditions $L \equiv 1$ and $k$ being uniformly bounded with
probability 1.) The stability results in \Cref{thm:stable-intermediate-rule}
require yet another (different) assumption of ``reference weight
concentration'', but we show in \Cref{sssec:mixture-sampling} how a sampler that
achieves only the first relative admissibility assumption can be augmented in a
computationally feasible manner to achieve both assumptions.

All our proofs are algorithmic in nature. The most literal implementation is a
trial-based procedure: sample an \(M\)-point least-squares rule, test whether
the weights are stable and/or positive; prune when successful to achieve an
$N$-point Tchakaloff rule. For the sample sizes identified by our results,
success is achieved with high probability. Repeated trials for these sample
sizes remain a practical option for checking success.

We re-emphasize that our results are distinct and new, but are similar to
existing ones in methodology \cite{migliorati2022stable} and in impact
\cite{schafer_2025}. At a high level, our setup is considerably more general
than in \cite{migliorati2022stable} (e.g., we allow infinite measures $\mu$,
unbounded Christoffel-type functions $k$, relatively flexible sampling choices,
general integration-type functionals and approximation weights, and
complex-valued functions), our relative admissibility assumption is
substantially weaker than similar assumptions in \cite[Theorem
6]{migliorati2022stable}, and we provide different types of probabilistic
guarantees. However, our setup and assumptions are somewhat stronger than
required for the abstract existence results in \cite{schafer_2025}, but our
existence results are explicitly constructive.  We provide more technical
comparisons and discussion that distinguish our results from these existing
ones in \Cref{ssec:assumptions-discussion}.

The remainder of the paper is organized as follows.
\Cref{sec:preliminaries} introduces the notation, admissibility
hypotheses, and main results.  \Cref{sec:ls,sec:posls,subsec:reduced} all focus on providing proofs for the main results stated in \Cref{sec:preliminaries}. In particular, \Cref{sec:ls} constructs the
least-squares quadrature rule and derives the limiting form of its weights.
\Cref{sec:posls} proves the finite-sample concentration estimates for
the weights and derives the positive intermediate rule.
\Cref{subsec:reduced} gives the Carath\'eodory--Steinitz pruning
argument and its consequences for the constructive Tchakaloff theorem.

We provide a numerical example in \Cref{sec:numerics},
discloure statements in \Cref{sec:disclosures},
and \Cref{sec:conclusion} concludes.

\section{Setup and main results}

\label{sec:preliminaries}
This section introduces the notation and hypotheses used throughout the
paper and states the main results.  The construction proceeds by first forming
an exact \(M\)-point least squares quadrature rule, then using finite-sample
weight estimates to obtain stability and/or positivity on an
event of probability at least \(1-\xi\), for a prescribed \(0<\xi<1\).
Deterministic Carath\'eodory--Steinitz pruning may
be applied to reduce the rule to interpolatory size, which maintains positivity
if the original $M$-point rule is positive. The definitions below are organized
to make these steps precise. 

\subsection{Notation}
\label{ssec:notation}

We will write finite dimensional vectors in lowercase boldface as $\mathbf{x}\in\mathbb{R}^d$ or $\mathbf{y}\in\mathbb{C}^d$ and matrices in uppercase boldface as $\mathbf{A}\in\mathbb{R}^{N\times M}$ or $\mathbf{B}\in\mathbb{C}^{N\times M}$. We will primarily work with complex-valued spaces as most results generalize or specialize to real-valued spaces. For $z = a + ib \in \mathbb{C}$ with $a,b\in\mathbb{R}$, we write $\overline{z}=a-ib$ to denote its complex conjugate and note that $\overline{a}=a$ for all $a\in\mathbb{R}$. We make careful use of both the transpose and adjoint operators defined respectively as follows for $\mathbf{B}\in\mathbb{C}^{N\times M}$,
\[(\trans{\mathbf{B}})_{i,j} = \mathbf{B}_{j,i},\quad (\mathbf{B}^*)_{i,j} = \overline{\mathbf{B}_{j,i}},\quad 1\leq i\leq M,\;1\leq j\leq N.\]
Similar transpose and adjoint logic is applied to vectors. The purpose of the inclusion of both is that while most linear algebra respects the complex inner-product, quadrature rules are naturally written with real inner-products. 

Let \(\mu\) be a positive measure with closed support \(D\) in a Banach space. 
We write
\begin{equation}
\label{eq:U-def}
    \mathcal U(D)
    \coloneqq
    \left\{
        u:D\to\mathbb C:
        u \text{ is \(\mu\)-measurable}
    \right\},
\end{equation}
over which pointwise evaluation is well-defined.
We work in the complex Hilbert space $L^2_\mu(D)$ defined as
\begin{align}
\label{eq:l2mu-def}
    L^2_\mu(D)
    &=
    \left\{
        u \in \mathcal{U}(D) :
        \int_D |u(x)|^2 \dx\mu(x)<\infty
    \right\} \big/ \sim_{\mu},
\end{align}
where $u\sim_\mu v$ means $u=v$ $\mu$-almost everywhere, and the inner-product and norm are defined respectively as
\begin{equation}
\label{eq:l2mu-normip-def}
    \langle u,v\rangle_{L^2_\mu(D)}
    =
    \int_D\overline{u(x)}v(x)\,d\mu(x),
    \qquad
    \|u\|_{L^2_\mu(D)}^2
    =
    \langle u,u\rangle_{L^2_\mu(D)}.
\end{equation}
To reduce notational clutter, we will write
\[
    \|\cdot\|=\|\cdot\|_{L^2_\mu(D)},
    \qquad
    \langle\cdot,\cdot\rangle
    =
    \langle\cdot,\cdot\rangle_{L^2_\mu(D)}.
\]
For any positive integer \(P\), we write
\begin{equation}
\label{eq:index-set-def}
    [P]
    \coloneqq
    \{1,\ldots,P\}.
\end{equation}

\subsubsection{The subspace \(V\)}
\label{sssec:preliminaries-V}

Let \(V\subset L^2_\mu(D)\) be a finite-dimensional subspace with
\(
    N=\dim(V) < \infty.
\)
Choose an \(L^2_\mu\)-orthonormal basis of \(V\), and fix a measurable representative \(\varphi_n:D\to\mathbb C\) of each basis element. We henceforth identify the equivalence class \(\sum_{n=1}^N c_n[\varphi_n]_\mu\in V\) with the pointwise-defined function
\[
x\longmapsto\sum_{n=1}^N c_n\varphi_n(x).
\]With this convention,
\begin{equation}
\label{eq:V-basis-def}
V=\operatorname{span}\{\varphi_n\}_{n=1}^N\subset\mathcal U(D),
\end{equation}
and all point evaluations below refer to this fixed realization of \(V\).

We construct quadrature rules that are exact on \(V\) using $L^2_\mu$-least
squares problems involving point evaluation data. When $\mu$ is a finite
measure so that sampling from it is possible, a corresponding notion of least
squares stability with such data that surfaces in analysis is the
Christoffel-type quantities,
\begin{equation}
\label{eq:k-def}
    k(x)
    =
    \sup_{v\in V\setminus\{0\}}
    \frac{|v(x)|^2}{\|v\|^2},
    \qquad
    K= \mathop{\mathrm{ess\,sup}}_{x \in D} k(x),
\end{equation}
which implicitly also depend on $\mu$.
Equivalently,
\[
    k(x)=\sum_{n=1}^N |\varphi_n(x)|^2.
\]
Thus, \(\sqrt{k(x)}\) is the operator norm of evaluation at \(x\) on the space \(V\).

Large values of \(k\) identify points at which functions in \(V\) can be large
relative to their \(L^2_\mu\)-norm, and the value of \(K\) enters sample
complexity estimates in least squares constructions.  

\subsubsection{The sampling measure $\rho$}\label{ssec:notation-sampling}
Since large values of \(K\) result in poor sample complexity estimates, we consider a change of measure technique wherein we still form approximations in $V \subset L^2_\mu(D)$, but generate samples from a different measure $\rho$. Let
\(\tau^2:D\to[0,\infty)\) be measurable and belong to $L^1_\mu(D)$, normalized so that
\(\|\tau^2\|_{L^1_\mu(D)} = 1\). We assume the following coverage condition:
\begin{equation}
\label{eq:sampling-coverage}
    k(x)=0
    \text{  for \(\mu\)-almost every  } x\in\{\tau^2=0\}.
\end{equation}
Equivalently, \(\tau^2(x)>0\) for \(\mu\)-almost every point at which
\(k(x)>0\). Thus, sampling may omit only a set on which every function in
\(V\) vanishes. We then define the probability measure $\rho$ by
\begin{equation}\label{eq:tau-def}
    \dx \rho=\tau^2 \dx \mu.
\end{equation}

Note that $\rho$ is always a probability measure even when $\mu$ is not a
finite measure. When sampling with $\rho$ but constructing $L^2_\mu$ least
squares approximations, the analogue of the stability parameter $K$ is,
\begin{equation}\label{eq:Q-def}
    Q
    =
    \mathop{\mathrm{ess\,sup}}_{x\sim\rho}
    \frac{k(x)}{\tau^2(x)}.
\end{equation}
Here and below, ratios of nonnegative quantities use the conventions
\(0/0=0\) and \(a/0=+\infty\) for \(a>0\). The essential supremum in
\eqref{eq:Q-def} is the relevant quantity for the concentration estimates;
a pointwise supremum would depend on the chosen pointwise representatives of
the functions in \(V\) and would impose a stronger condition.
Many results below assume \(Q<\infty\). This is only a \(\rho\)-almost-sure
bound and does not constrain \(\{\tau^2=0\}\), which is instead governed by
\eqref{eq:sampling-coverage}; common zeros of \(k\) and \(\tau^2\) are
permitted.
Under the $\rho$ sampling measure, it is the tail behavior of \(k\), together
with the change of measure Jacobian, \(\tau^2\), that controls the finite-sample estimates in
\Cref{sec:posls}. Because $\rho$ is a probability measure, every
sampling density satisfying \eqref{eq:sampling-coverage} obeys the lower
bound
\begin{align}
    \label{eq:Q-lower-bound}
    Q
    =
    \mathop{\mathrm{ess\,sup}}_{x\sim\rho}\frac{k(x)}{\tau^2(x)}
    \ge
    \int_{\{\tau^2>0\}} \frac{k(x)}{\tau^2(x)}\dx{\rho}(x)
    =
    \int_{\{\tau^2>0\}} k(x)\dx{\mu}(x)
    =
    \int_D k(x)\dx{\mu}(x)
    =
    N.
\end{align}
Equality (the minimum of $Q = N$) is attained by the particularly important choice of the \emph{induced distribution}
\begin{equation}\label{eq:induced_measure}
\dx \rho_{\mathrm{ind}} \coloneqq \tau_{\mathrm{ind}}^2
\dx \mu, \qquad 
\tau_{\mathrm{ind}}^2=k/N,
\end{equation}
for which \(k/\tau_{\mathrm{ind}}^2=N\) on \(\{k>0\}\), while both
numerator and denominator vanish on \(\{k=0\}\). Thus, the coverage
condition holds and \(Q=N\). We consider general samplers, possibly
$\tau \neq \tau_{\mathrm{ind}}$, for example because in some computational
scenarios only an approximation $\tau \approx \tau_{\mathrm{ind}}$ may be
available as a sampler.

\subsubsection{The functional \(\mathcal L\)}
\label{sssec:preliminaries-L}

Let \(\mathcal L\in V^*\) be a bounded linear functional on \(V\).  A
central example, whenever this is bounded on $V \subset L^2_\mu$, is the integral functional
\[
    \mathcal L(v)
    =
    \int_D v(x)\dx\mu(x),
    \qquad v\in V.
\]

The least squares weights will be expressed in terms of the action of
\(\mathcal L\) on a basis of \(V\).  It is therefore useful to identify
\(\mathcal L\) with its \(L^2_\mu\)-Riesz representer on \(V\).  Let
\(\{\varphi_n\}_{n=1}^N\) be an(y) orthonormal basis for \(V\), and define the
moment vector
\begin{equation}
\label{eq:moment-vector}
    \bs\ell
    =
    \trans{(\ell_1,\ldots,\ell_N)}\in\mathbb C^N,
    \qquad
    \ell_n=\mathcal L(\varphi_n).
\end{equation}
The \(L^2_\mu\)-Riesz representer of \(\mathcal L\) on \(V\) is the function
\(L\in V\) defined by
\begin{equation}
\label{eq:L-def}
    L(x)
    =
    \bs\ell^*\bs\varphi(x)
    =
    \sum_{n=1}^N \overline{\ell_n}\,\varphi_n(x),
    \qquad
    \bs\varphi(x)
    =
    \trans{(\varphi_1(x),\ldots,\varphi_N(x))}.
\end{equation}
Then, for every \(u\in V\),
\begin{equation}
\label{eq:L-integral}
    \mathcal L(u)
    =
    \langle L,u\rangle
    =
    \int_D \overline{L(x)}u(x)\,\dx \mu(x), \hskip 10pt
    \|\mathcal L\|_{V^*}=\|L\|=\|\bs\ell\|_2.
\end{equation}
Although \eqref{eq:L-def} is written in terms of a basis, \(L\) is
basis-independent. Of particular note is the special case when $\mathcal{L}$ is
integration and constant functions are in $V$ ($1 \in V$), as in that case we
have $L(x) = 1$. See \Cref{app:integral-representer} for a brief proof.

\subsubsection{Quadrature rules}
\label{sssec:preliminaries-quadrature}

Given \((\mu,V,\mathcal L)\), an \(M\)-point quadrature rule on
\(\mathcal U(D)\) that is exact on \(V\) is a functional of the form
\begin{equation}\label{eq:Q-V-exact}
    \mathcal Q_M:\mathcal U(D)\to\mathbb C,
    \qquad
    \mathcal Q_M(u)
    =
    \sum_{m=1}^M w_m u(x_m), \hskip 15pt
    \mathcal Q_M(v)=\mathcal L(v),
        \;\; \forall\;v\in V,
\end{equation}
where \(x_m\in D\) are the nodes and \(w_m\in\mathbb C\) are the weights.  Let \(\mathsf W:D\to(0,\infty)\) be a positive, measurable approximation weight and define the weighted $L^\infty$-type supremum space
\begin{equation}
\label{eq:weighted-sup-space}
    \mathcal{B}^\infty_{\mathsf W}(D)
    \coloneqq
    \left\{
        u\in\mathcal{U}(D):
        \|u\|_{\infty,\mathsf W}
        \coloneqq
        \|\mathsf W u\|_\infty
        =
        \sup_{x\in D}\mathsf W(x)|u(x)|
        <\infty
    \right\}.
\end{equation}
Here we take \(\mathcal{B}^\infty_{\mathsf W}(D)\) to consist of \(\mu\)-measurable,
pointwise-defined functions, rather than equivalence classes modulo
almost-everywhere equality, since quadrature rules involve point evaluation.
Thus, \(\mathcal{B}^\infty_{\mathsf W}(D)\subset\mathcal U(D)\).
We measure the stability of \(\mathcal Q_M\) by its operator norm on
\(\mathcal{B}^\infty_{\mathsf W}(D)\):
\begin{equation}
\label{eq:condition-number}
    \kappa_{\mathsf W}(\mathcal Q_M)
    \coloneqq
    \sup_{\substack{u\in \mathcal{B}^\infty_{\mathsf W}(D)\\
    \|u\|_{\infty,\mathsf W}\leq1}}
    |\mathcal Q_M(u)|
    \leq
    \sum_{m=1}^M
    \frac{|w_m|}{\mathsf W(x_m)}.
\end{equation}
The right-hand side is a coefficient bound. It is exact when the listed nodes
are distinct; at coincident nodes, the corresponding coefficients combine
before their absolute value is taken.
Indeed, \(\|u\|_{\infty,\mathsf W}\leq1\) is equivalent to
\[
    |u(x)|\leq\mathsf W(x)^{-1},
    \qquad x\in D,
\]
so \(\kappa_{\mathsf W}(\mathcal Q_M)\) is the largest possible value of
\(\left|\sum_m w_m u(x_m)\right|\) over functions satisfying this pointwise
bound. It therefore quantifies the amplification of perturbations measured in
the weighted supremum norm.

The choice \(\mathsf W\equiv1\) recovers the usual absolute condition number
\begin{equation}
    \label{eq:standard-condition-number}
    \kappa(\mathcal Q_M)\leq\sum_{m=1}^M|w_m|,
\end{equation}
for which we suppress the subscript. Whenever real-valued weights are expected,
positive rules are especially desirable because they avoid cancellation. In
the unweighted case, if \(w_m\geq0\) and \(1\in V\), then exactness on constants
gives
\[
    \kappa(\mathcal Q_M)
    =
    \sum_{m=1}^M w_m
    =
    \mathcal L(1).
\]

The weighted formulation is useful on unbounded domains, where elements of
\(V\), such as nonconstant polynomials, may have infinite supremum norm while
\(\mathsf Wv\) remains bounded. It therefore permits meaningful uniform
approximation and stability estimates for functions with controlled growth at
infinity. For the resulting error estimate, suppose that
\[
    V\subset \mathcal{B}^\infty_{\mathsf W}(D)
    \qquad\text{and}\qquad
    \mathcal L_{\mathsf W}\in
    \bigl(\mathcal{B}^\infty_{\mathsf W}(D)\bigr)^*,
    \quad
    \mathcal L_{\mathsf W}|_V=\mathcal L.
\]
Then exactness on \(V\) gives the weighted Lebesgue inequality
\begin{align}\label{eq:quad-lebesgue}
    |\mathcal Q_M(u)-\mathcal L_{\mathsf W}(u)|
    &\leq
    \left(
        \kappa_{\mathsf W}(\mathcal Q_M)
        +
        \|\mathcal L_{\mathsf W}\|_{\infty,\mathsf W}^*
    \right)
    e_{V,\infty,\mathsf W}(u),\\
    \label{eq:quad-pointwise-err}
    e_{V,\infty,\mathsf W}(u)
    &=
    \inf_{v\in V}\|u-v\|_{\infty,\mathsf W},
\end{align}
for every \(u\in \mathcal{B}^\infty_{\mathsf W}(D)\), where
\(\|\mathcal L_{\mathsf W}\|_{\infty,\mathsf W}^*\) denotes the operator norm
of \(\mathcal L_{\mathsf W}\) on this space.
A proof is given in \Cref{app:lebesgue}. Thus stable
rules (small or controlled $\kappa_{\mathsf W}(\mathcal Q_M)$) convert weighted best approximation estimates, such as weighted Jackson
inequalities~\cite{Jackson1982,Passow1970}, into quadrature error estimates; see, e.g.,
\cite{lubinsky2007survey}. Further results on stable quadrature can be found in
\cite{Brass2011,Watson1980,Ibrahimoglu2016}.

We call a quadrature rule \emph{interpolatory-size} if it is exact on \(V\) and
has at most \(N=\dim(V)\) active nodes. (``Active'' nodes are those with
nonzero weights.) If exactly \(N\) active nodes are present and the associated
evaluation matrix is nonsingular, this agrees with the usual interpolatory
construction based on cardinal functions.

For convenience, \Cref{tab:notation} summarizes the principal notation used
throughout the paper.

\begin{table}[t]
\small
\centering
\renewcommand{\tabcolsep}{0.28cm}
\renewcommand{\arraystretch}{1.18}
\begin{tabular}{@{}ccp{0.74\textwidth}@{}}
\toprule
\multicolumn{3}{@{}l}{\textbf{Ambient Setting}} \\
\midrule
\(\mathcal U(D)\) & \eqref{eq:U-def} & Pointwise-defined ambient space for quadrature inputs \\
\(D\), \(\mu\), \(L^2_\mu(D)\) & \eqref{eq:l2mu-def} & Support, measure, and the ambient Hilbert space \\
\(\langle\cdot,\cdot\rangle\), \(\|\cdot\|\) & \eqref{eq:l2mu-normip-def} & Ambient Hilbert space norm and inner product \\
\([P]\) & \eqref{eq:index-set-def} & Index set \(\{1,\ldots,P\}\) \\
\(V\), \(N\),\(\{\varphi_n\}_{n=1}^N\) & \eqref{eq:V-basis-def} & Approximation space, its dimension, and an \(L^2_\mu\)-orthonormal basis\\
\(k(x)\), \(K\) & \eqref{eq:k-def} & Christoffel-type quantity and its supremum, measuring point evaluation \\
\midrule
\multicolumn{3}{@{}l}{\textbf{Sampling, Functionals, and Quadrature}} \\
\midrule
\(\tau^2\), \(\rho\) & \eqref{eq:tau-def} & Sampling density with respect to \(\mu\) and associated probability measure \\
\(X\sim\rho\), \(\E_\rho\) & \eqref{eq:tau-def} & Generic \(\rho\)-distributed sample and expectation under \(\rho\) \\
\(Q\) & \eqref{eq:Q-def} & Parameter governing empirical Gramian concentration and sample complexity \\
\(\rho_{\mathrm{ind}}\), \(\tau_{\mathrm{ind}}^2\) & \eqref{eq:induced_measure} & Induced distribution and its density; this choice attains \(Q=N\) \\
\(\bs\ell\) & \eqref{eq:moment-vector} & Moment vector of \(\mathcal L\)\\
\(\mathcal L\), \(\|\mathcal L\|_{V^*}\) & \eqref{eq:L-integral} & Bounded linear functional on \(V\) and its operator norm \\
\(L\), \(\bs\varphi(x)\) & \eqref{eq:L-def} & \(L^2_\mu\)-Riesz representer of \(\mathcal L\) and point-evaluation vector of basis functions \\
\(M\), \(x_m\), \(w_m\) & \eqref{eq:Q-V-exact} & Number of sampled nodes, quadrature nodes, and quadrature weights \\
\(\mathcal Q_M\) & \eqref{eq:Q-V-exact} & \(M\)-point quadrature rule exact on \(V\) \\
\(\mathsf W\), \(\mathcal{B}^\infty_{\mathsf W}(D)\) & \eqref{eq:weighted-sup-space} & Approximation weight and weighted supremum space \\
\(\kappa_{\mathsf W}(\mathcal Q_M)\) & \eqref{eq:condition-number} & Weighted condition number used to quantify quadrature stability \\
\(\|\mathcal L_{\mathsf W}\|_{\infty,\mathsf W}^*\) & \eqref{eq:quad-lebesgue} & Operator norm of the extension \(\mathcal L_{\mathsf W}\) \\
\(e_{V,\infty,\mathsf W}(u)\) & \eqref{eq:quad-pointwise-err} & Best weighted uniform approximation error over \(V\) \\
\midrule
\multicolumn{3}{@{}l}{\textbf{Admissibility and Concentration}} \\
\midrule
\(\omega_{\mathcal L,\rho}(x)\) & \eqref{eq:reference-weight} & Reference weight governing the large-\(M\) limit of the least-squares weights \\
\(\cos\gamma(x)\) & \eqref{eq:alignment-angle} & Alignment factor between \(\mathcal L\) and point evaluation \\
\(p_J\) & \eqref{eq:relative-amplification-tail} & Tail probability used to characterize relative admissibility \\
\(A_{\mathcal L,\rho}(x)\) & \eqref{eq:absolute-amplification} & Absolute amplification factor \\
\(q_J\) & \eqref{eq:absolute-admissibility} & Tail probability used to characterize absolute admissibility \\
\(\mathsf K_{\mathcal L,\rho,\mathsf W}(x)\) & \eqref{eq:weighted-reference-magnitude} & Weighted magnitude of the reference weight \\
\(\|\mathsf K_{\mathcal L,\rho,\mathsf W}(X)\|_{\psi_1}\) & \eqref{eq:reference-weight-concentration} & Sub-exponential Orlicz norm used in reference-weight concentration \\
\midrule
\multicolumn{3}{@{}l}{\textbf{Real and Positive Quadrature}} \\
\midrule
\(V_{\mathbb R}\) & \eqref{eq:VR-def} & Real subspace of \(V\) used in the real-preserving and positive-quadrature discussion \\
\midrule
\multicolumn{3}{@{}l}{\textbf{Least-Squares Construction}} \\
\midrule
\(u_V^{(M)}\) & \eqref{eq:wls-functional} & Weighted discrete least-squares approximation in \(V\) \\
\(\mathbf G^{(M)}\), \(\mathbf c\) & \eqref{eq:normal-equations} & Empirical Gramian and least-squares coefficient vector \\
\(\mathbf u\) & \eqref{eq:V-W-def} & Sampled data vector \(\trans{(u(x_1),\ldots,u(x_M))}\) \\
\(\mathbf V\), \(\mathbf W\) & \eqref{eq:V-W-def} & Evaluation matrix and diagonal weight matrix \\
\(\mathbf w\) & \eqref{eq:w-vector} & \(M\)-point quadrature-weight vector \\
\bottomrule
\end{tabular}
\renewcommand{\arraystretch}{1}
\renewcommand{\tabcolsep}{12pt}
\caption{Notation used throughout this paper.}
\label{tab:notation}
\end{table}

\subsection{The main procedure: ``intermediate'' least squares quadrature}\label{ssec:procedure}
We have enough notation to describe the main procedure we consider and analyze: For some fixed $M$, let $\{x_m\}_{m \in [M]}$ denote a collection of iid samples distributed according to the sampling measure $\rho$. Given \(u\in\mathcal U(D)\), we define \(\mathcal Q_M(u)\) by having \(\mathcal L\) act on the (weighted) least squares approximation on the samples \(x_m\):
\begin{align}\label{eq:ls-quad-procedure}
  \mathcal{Q}_M(u) &= \mathcal{L}\left(u^{(M)}_V\right), & u^{(M)}_V &= \argmin_{v \in V} \sum_{m \in [M]} \frac{1}{M \tau^2(x_m)} \left| u(x_m) - v(x_m)\right|^2.
\end{align}
We assume above that the least squares solution $u^{(M)}_V$ is unique; this is guaranteed (say with high probability) by a sufficiently large choice of $M$, which is accounted for in our detailed analysis. As our use of the notation $\mathcal{Q}_M$ suggests, \eqref{eq:ls-quad-procedure} is actually the implicit formula for a quadrature rule of the form \eqref{eq:Q-V-exact}. In particular, there are weights $\{w_m\}_{m \in [M]}$, independent of $u$, such that $\mathcal{Q}_M(u)$ as defined above is equal to $\sum_{m} w_m u(x_m)$. See \Cref{ssec:ls-weights}, in particular \eqref{eq:w-vector}, for the explicit identification of weights from this problem. This idea and procedure is also the central formulation in \cite{migliorati2022stable}.

Our main theoretical advances describe sufficient conditions (assumptions) that guarantee when the procedure \eqref{eq:ls-quad-procedure} produces a quadrature rule with desirable properties.  

\subsection{Assumptions for finite-sample weight approximation}
\label{ssec:weight-approximation}

For a sampled node \(x_m\), the limiting (large-$M$), $M$-scaled least squares weight is
\begin{equation}
\label{eq:reference-weight}
    \omega_{\mathcal L, \rho}(x_m)
    \coloneqq
    \frac{\overline{L(x_m)}}{\tau^2(x_m)},
\end{equation}
see Lemma \ref{lemma:wm-asymptotic}, which shows, for each fixed \(m\), that
\(M w_m^{(M)}\to\omega_{\mathcal L,\rho}(x_m)\) almost surely as
\(M\to\infty\). We seek finite-sample control of
\begin{equation}
\label{eq:weight-approximation-error}
    \left|
        M w_m-\omega_{\mathcal L, \rho}(x_m)
    \right|,
\end{equation}
where \( (x_m, w_m) \) are weights and nodes associated to the quadrature rule \eqref{eq:ls-quad-procedure}.
The coverage condition \eqref{eq:sampling-coverage} implies that
\(L(x)=0\) for \(\mu\)-almost every \(x\in\{\tau^2=0\}\), since
\(|L(x)|\leq\|L\|\sqrt{k(x)}\). We set
\(\omega_{\mathcal L,\rho}(x)=0\) on this set. This convention has no
effect on the sampled weights because \(\rho(\{\tau^2=0\})=0\).
Our primary result concerns relative error, since relative accuracy smaller
than one preserves positivity when the limiting weights are nonnegative.
We also record a corresponding absolute-error condition. 

The next sections present the assumptions we require to control finite-sample fluctuations. These conditions are technical, and we provide a more complete discussion devoted to making them more transparent in \Cref{ssec:assumptions-discussion}.

\subsubsection{Relative admissibility}
\label{ssec:assumption}

Assume that \(\mathcal L\neq0\). For \(x\in D\) with \(k(x)>0\), define
\begin{equation}
\label{eq:alignment-angle}
    \cos\gamma(x)
    \coloneqq
    \left|
        \left\langle
            \frac{\bs\ell}{\|\bs\ell\|_2},
            \frac{\bs\varphi(x)}{\|\bs\varphi(x)\|_2}
        \right\rangle
    \right|
    =
    \frac{|L(x)|}
    {\|\mathcal L\|_{V^*}\sqrt{k(x)}}.
\end{equation}
If \(k(x)=0\), then \(L(x)=0\), and we set
\(\cos\gamma(x)=1\). This quantity is independent of the orthonormal basis
chosen for \(V\).

Small values of \(\cos\gamma(x)\) correspond to point evaluations that are
nearly orthogonal, in coefficient space, to the functional
\(\mathcal L\). Such points amplify relative weight error, so we assume control on the probability that $\cos \gamma(x)$ is vanishingly small.

\begin{definition}[Relative admissibility]
\label{ass:relative-admissibility}
The tuple \((\mu,\rho,V,\mathcal L)\) satisfies the
\emph{relative admissibility condition} if \(Q<\infty\) and
\begin{equation}
\label{eq:relative-admissibility-small-ball}
    \lim_{\epsilon\to0^+}
    \frac{
        \Prob\left[
            \cos\gamma(X)\leq\epsilon
            \,\middle|\,
            X\sim\rho
        \right]
    }{\epsilon^2}
    =
    0.
\end{equation}
Equivalently, with
\begin{equation}
\label{eq:relative-amplification-tail}
    p_J
    \coloneqq
    \Prob\left[
        \frac{1}{\cos\gamma(X)} \ge J^{1/2}
        \,\middle|\,
        X\sim\rho
    \right],
\end{equation}
relative admissibility is $Q < \infty$ along with the condition
\begin{equation}
\label{eq:relative-admissibility}
    \lim_{J\to\infty}Jp_J=0.
\end{equation}
\end{definition}
The small-ball condition \eqref{eq:relative-admissibility-small-ball} is a useful geometric interpretation, while the \(J\)-parametrization will be useful in \Cref{sec:posls}.

\begin{remark}[Absolute-error variant]
\label{rem:absolute-admissibility}
Define the absolute amplification factor
\begin{equation}
\label{eq:absolute-amplification}
    A_{\mathcal L, \rho}(x)
    \coloneqq
    \|\mathcal L\|_{V^*}
    \frac{\sqrt{k(x)}}{\tau^2(x)}.
\end{equation}
For \(J\geq1\), let
\[
    q_J
    \coloneqq
    \Prob\left[
        A_{\mathcal L, \rho}(X)\geq J^{1/2}
        \,\middle|\,
        X\sim\rho
    \right].
\]
Paired with $Q < \infty$, we call
\begin{equation}
\label{eq:absolute-admissibility}
    \lim_{J\to\infty}Jq_J=0
\end{equation}
the \emph{absolute admissibility condition}.  
This condition yields absolute
control of \eqref{eq:weight-approximation-error}; see
\Cref{sec:posls}. 

\end{remark}

\subsubsection{Reference-weight concentration}
\label{ssec:reference-weight-concentration}

Relative and absolute admissibility control approximation of least squares weights relative to the limiting
weights. For example, on any event for which the relative weight error is at most
\(\sigma>0\), i.e., $|\omega_{\mathcal L, \rho}(x_m) - M w_m| \leq \sigma |\omega_{\mathcal L,\rho}(x_m)|$, then
\begin{equation}
\label{eq:condition-number-reference-weight-bound}
    \kappa_{\mathsf W}(\mathcal Q_M)
    \leq
    (1+\sigma)\frac{1}{M}
    \sum_{m=1}^M
    \mathsf K_{\mathcal L,\rho,\mathsf W}(x_m),
\end{equation}
where
\begin{equation}
\label{eq:weighted-reference-magnitude}
    \mathsf K_{\mathcal L,\rho,\mathsf W}(x)
    \coloneqq
    \frac{|\omega_{\mathcal L,\rho}(x)|}{\mathsf W(x)}
    =
    \frac{|L(x)|}{\tau^2(x)\mathsf W(x)}.
\end{equation}

Hence, control of the quadrature condition number can be established by additionally 
requiring concentration of \(\mathsf K_{\mathcal L,\rho,\mathsf W}(X)\), \(X\sim\rho\), around its mean.
This mean is independent of the choice of sampling measure since
\begin{equation}
\label{eq:reference-weight-mean}
    \E_\rho\left[
        \mathsf K_{\mathcal L,\rho,\mathsf W}(X)
    \right]
    =
    \int_D
    \frac{|L(x)|}{\mathsf W(x)}
    \,\dx\mu(x)
    =
    \left\|
        \frac{L}{\mathsf W}
    \right\|_{L^1_\mu(D)}.
\end{equation}
However, the choice of \(\rho\) affects the fluctuations of the empirical mean. These fluctuations are controlled under the following standard condition from high-dimensional probability.

\begin{definition}[Reference-weight concentration]
\label{ass:reference-weight-concentration}
For a fixed approximation weight \(\mathsf W\), the tuple
\((\mu,\rho,V,\mathcal L)\) satisfies the
\emph{reference-weight concentration condition} if
\(\mathsf K_{\mathcal L,\rho,\mathsf W}(X)\) is sub-exponential, i.e.,
\begin{equation}
\label{eq:reference-weight-concentration}
    \left\|
        \mathsf K_{\mathcal L,\rho,\mathsf W}(X)
    \right\|_{\psi_1}
    \coloneqq
    \inf\left\{
        c>0:
        \E_\rho\left[
            \exp\left(
                \frac{
                    \mathsf K_{\mathcal L,\rho,\mathsf W}(X)
                }{c}
            \right)
        \right]
        \leq2
    \right\}
    <\infty,
    \qquad X\sim\rho.
\end{equation}
\end{definition}

\subsection{Main result: Stability of intermediate least squares quadrature}
Our first main result stipulates that, for any prescribed \(0<\xi<1\), a
randomized least squares quadrature rule of a particular finite size yields
weights that concentrate close to their limiting values with probability at
least \(1-\xi\).

\begin{restatable}[Finite-sample weight approximation]{theorem}{finiteSampleWeightApprox}
\label{thm:finite-sample-weight-approximation} 
Let
\(
    \star\in\{\mathrm{rel},\mathrm{abs}\},
\)
and suppose that
\((\mu,\rho,V,\mathcal L)\) is \(\star\)-admissible. Define
\begin{equation}\label{eq:weight_amplification_class}
    \Xi_\star(x)
    \coloneqq
    \begin{cases}
        |\omega_{\mathcal L,\rho}(x)|,
        & \star=\mathrm{rel},\\
        1,
        & \star=\mathrm{abs}.
    \end{cases}
\end{equation}
Then, for every \(\sigma>0\) and every $0<\xi<1$, there exist \(J\geq1\) and an integer \(M\)
satisfying
\[
    \frac{6QJ}{\sigma^2}\log\left(\frac{4N}{\xi}\right)
    \leq
    M
    \leq
    \frac{7QJ}{\sigma^2}\log\left(\frac{4N}{\xi}\right),
\]
such that, with probability at least $1-\xi$, the least squares quadrature rule is
well-defined, exact on \(V\), and satisfies
\[
    \left|
        M w_m-\omega_{\mathcal L,\rho}(x_m)
    \right|
    \leq
    \sigma\,\Xi_\star(x_m),
    \qquad
    m=1,\ldots,M.
\]
\end{restatable}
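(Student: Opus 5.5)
The plan is to write the least squares weights explicitly, reduce the weight error at every node to one matrix quantity times a pointwise amplification factor, and then control each factor on its own high-probability event.

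\textbf{Step 1: explicit weights.} With the empirical Gramian
\[
\mathbf G^{(M)}=\frac1M\sum_{m=1}^M \frac{\bs\varphi(x_m)\bs\varphi(x_m)^*}{\tau^2(x_m)},
\]
the least squares coefficients of \eqref{eq:ls-quad-procedure} are $\mathbf c=(\mathbf G^{(M)})^{-1}\frac1M\sum_m \bs\varphi(x_m)u(x_m)/\tau^2(x_m)$. Since $\mathcal L(\sum_n c_n\varphi_n)=\trans{\bs\ell}\mathbf c$, this gives
\[
Mw_m=\frac{\trans{\bs\ell}(\mathbf G^{(M)})^{-1}\bs\varphi(x_m)}{\tau^2(x_m)}
\quad\text{and}\quad
\omega_{\mathcal L,\rho}(x_m)=\frac{\trans{\bs\ell}\bs\varphi(x_m)}{\tau^2(x_m)}.
\]
(Both should agree with \eqref{eq:w-vector} and Lemma~\ref{lemma:wm-asymptotic}.) Exactness on $V$ is automatic once $\mathbf G^{(M)}$ is invertible, because the least squares projection then reproduces every $v\in V$.

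\textbf{Step 2: deterministic reduction.} Cauchy--Schwarz gives
\[
|Mw_m-\omega_{\mathcal L,\rho}(x_m)|\le \|(\mathbf G^{(M)})^{-1}-\mathbf I\|_2\,\|\bs\ell\|_2\frac{\sqrt{k(x_m)}}{\tau^2(x_m)}=\|(\mathbf G^{(M)})^{-1}-\mathbf I\|_2\,A_{\mathcal L,\rho}(x_m).
\]
Dividing by $|\omega_{\mathcal L,\rho}(x_m)|$ and using \eqref{eq:alignment-angle}, the relative error is at most $\|(\mathbf G^{(M)})^{-1}-\mathbf I\|_2/\cos\gamma(x_m)$. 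In both cases, $\star\in\{\mathrm{rel},\mathrm{abs}\}$, the error is bounded by $\|(\mathbf G^{(M)})^{-1}-\mathbf I\|_2$ times an amplification factor $Z(x_m)$, where $Z=1/\cos\gamma$ or $Z=A_{\mathcal L,\rho}$, and $\Prob[Z(X)\ge J^{1/2}]$ is $p_J$ or $q_J$ respectively. It therefore suffices to have, simultaneously,
\[
\text{(a)}\ \ \|(\mathbf G^{(M)})^{-1}-\mathbf I\|_2\le \sigma J^{-1/2},
\qquad
\text{(b)}\ \ Z(x_m)<J^{1/2}\ \text{ for all } m.
\]

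\textbf{Step 3: probability of (a) and (b).} For (a), apply a matrix Chernoff/Bernstein inequality to the i.i.d.\ rank-one summands. They have mean $\mathbf I$ (orthonormality and $d\rho=\tau^2 d\mu$) and norm at most $Q$ $\rho$-a.s., so
\[
\Prob\bigl[\|\mathbf G^{(M)}-\mathbf I\|_2>\delta\bigr]\le 2N\exp\Bigl(-\frac{M\delta^2}{2Q(1+\delta/3)}\Bigr).
\]
Take $\delta\approx \sigma/(2\sqrt J)$, small enough that $\delta/(1-\delta)\le\sigma/\sqrt J$; this may require $J\ge J_0(\sigma)$ when $\sigma$ is large. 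Then $M\ge 6QJ\log(4N/\xi)/\sigma^2$ makes this probability at most $\xi/2$, after tracking constants. For (b), a union bound gives failure probability at most $Mp_J$ (or $Mq_J$), which is at most
\[
\frac{7Q\log(4N/\xi)}{\sigma^2}\,J p_J .
\]
By \eqref{eq:relative-admissibility} (or \eqref{eq:absolute-admissibility}), $Jp_J\to0$, so we can choose $J\ge\max(1,J_0)$ large enough that this is at most $\xi/2$.

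Finally, the interval $\bigl[6QJ\log(4N/\xi)/\sigma^2,\;7QJ\log(4N/\xi)/\sigma^2\bigr]$ has length $QJ\log(4N/\xi)/\sigma^2$. Enlarging $J$ makes this at least $1$, so the interval contains an integer $M$, and enlarging $J$ only helps both (a) and (b). A union bound over (a) and (b) finishes the proof.

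\textbf{Main obstacle.} I expect the delicate part to be the constant bookkeeping in Step 3. Specifically, the stated factor $6$ must come out of the chosen matrix concentration inequality, together with the passage from $\|\mathbf G^{(M)}-\mathbf I\|_2\le\delta$ to the bound on $\|(\mathbf G^{(M)})^{-1}-\mathbf I\|_2$. I would first try the Tropp Chernoff bound with $\delta=\sigma/(2\sqrt J)$ and $J\ge \max(1,\sigma^2)$, so that $\delta\le 1/2$. If the constant falls short, I would enlarge the admissible range of $J$ rather than change the form of $M$.
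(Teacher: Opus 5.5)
Your architecture matches the paper's. Both arguments use the deterministic bound $|Mw_m-\omega_{\mathcal L,\rho}(x_m)|\le\|(\mathbf G^{(M)})^{-1}-\mathbf I\|_2\,A_{\mathcal L,\rho}(x_m)$ together with its relative form through $\cos\gamma$. Both then combine an inverse-Gramian event at level $\sigma/\sqrt J$ with a good-node event, giving each probability $\xi/2$. For the good-node event, your union bound $Mr_J\le 7QJr_J\log(4N/\xi)/\sigma^2$ is valid. It is a slightly simpler substitute for the paper's exact computation $\Prob[R_M(J)]=(1-r_J)^M$ with $-\log(1-r_J)\le 2r_J$, which is where the paper's factor $14$ and its side condition $r_J\le 1/2$ come from.

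The step that fails as written is (a). With $\delta=\sigma/(2\sqrt J)$, making the Chernoff bound $2N\exp(-M\delta^2/(3Q))$ at most $\xi/2$ needs $M\ge 12QJ\log(4N/\xi)/\sigma^2$. Your Bernstein form needs more than $8QJ\log(4N/\xi)/\sigma^2$. Both lie outside the window: at $M=6QJ\log(4N/\xi)/\sigma^2$ the Chernoff bound only gives $\sqrt{N\xi}$. Enlarging $J$ cannot repair this while $\delta\sqrt J/\sigma=1/2$, because that ratio alone fixes the constant. You need $\delta^2\ge\sigma^2/(2J)$ while still concluding $\|(\mathbf G^{(M)})^{-1}-\mathbf I\|_2\le\sigma/\sqrt J$.

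The paper works on the event $\{\|\mathbf G^{(M)}-\mathbf I\|_2\le\sigma/\sqrt{2J}\}$:
\begin{itemize}
\item When $\sigma/\sqrt J\le c_0=1-2^{-1/2}$, this event gives $\|(\mathbf G^{(M)})^{-1}\|_2\le\sqrt2$.
\item Hence $\|(\mathbf G^{(M)})^{-1}-\mathbf I\|_2\le\sqrt2\,\|\mathbf G^{(M)}-\mathbf I\|_2\le\sigma/\sqrt J$.
\item Chernoff at $\delta^2=\sigma^2/(2J)$ with $\varepsilon=\xi/2$ then requires exactly $M\ge 6QJ\log(4N/\xi)/\sigma^2$.
\end{itemize}
Your $\delta/(1-\delta)$ route also works if you take $\delta=t/(1+t)$ with $t=\sigma/\sqrt J$, and then enlarge $J$ until $(1+t)^2\le 2$. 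Only in that combination is your fallback of enlarging $J$ correct. Alternatively, since $J$ enters the statement only through the window, you could run your argument with an internal threshold $J'$ and report $J=2J'$. That, however, decouples $J$ from the good-node threshold.

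One further slip in Step 1. For complex $V$ the normal equations give $\mathbf G^{(M)}=\frac1M\sum_m\overline{\bs\varphi(x_m)}\,\trans{\bs\varphi(x_m)}/\tau^2(x_m)$ and $\omega_{\mathcal L,\rho}(x_m)=\trans{\bs\ell}\,\overline{\bs\varphi(x_m)}/\tau^2(x_m)=\overline{L(x_m)}/\tau^2(x_m)$. So $\bs\varphi(x_m)$ should be conjugated there. This is harmless, since only norms enter Steps 2 and 3.
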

This conclusion gives a finite-sample guarantee with prescribed success
probability \(1-\xi\). In particular, it guarantees that at least one finite
sample set produces the desired \(M\)-point rule. See
\Cref{subsec:finite-sample-weight-approximation} for the proof and further
details.
Note that the result above restricts $M$ to a particular valid interval, and in
particular one cannot simply take $M$ arbitrarily large: Larger $M$ does
provide more probabilistic concentration, but it also generates more random
weights that must all simultaneously be controlled. 

Relative admissibility alone does not prescribe a rate at which the threshold
\(J\), which drives the sample complexity, may be chosen. (In particular, the
unspecified number $J$ in \Cref{thm:finite-sample-weight-approximation} is
chosen according to the somewhat opaque condition
\eqref{eq:weight-window-choice}.) A more transparent quantitative rate follows
from stronger integrability of the factor \eqref{eq:alignment-angle}. 
\begin{restatable}[Quantitative sample complexity]{lemma}{quantSampleComplexity}
\label{lem:quant-sample-complexity}
Suppose that, for some \(r>2\),
\[
    C_r
    \coloneqq
    \|(\cos\gamma)^{-1}\|_{L^r_\rho}^r
    <
    \infty.
\]
Then the sample size in the $\star = \mathrm{rel}$ case of
\Cref{thm:finite-sample-weight-approximation} may be chosen so that
\[
    M
    =
    O\left(
        \left(\frac{C_r}{\xi}\right)^{2/(r-2)}
        \left(
            \frac{Q}{\sigma^2}\log\left(\frac{4N}{\xi}\right)
        \right)^{r/(r-2)}
    \right).
\]
\end{restatable}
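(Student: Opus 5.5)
We do not yet have the exact condition \eqref{eq:weight-window-choice} that determines $J$ in \Cref{thm:finite-sample-weight-approximation}, so the plan rests on a guess about its form. We expect the proof of that theorem to run a union bound over the $M$ sampled nodes. Each node's weight error is controlled by two pieces: the empirical Gramian concentration, which costs a factor $Q\log(4N/\xi)/M$ and accounts for the $\log(4N/\xi)$ term, and a truncation event, namely that some node has $1/\cos\gamma(x_m)^2\geq J$. The truncation event has probability at most $Mp_J$.

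Our working hypothesis is therefore that $J$ is admissible once
\[
    M p_J \le \xi/2
    \quad\text{with}\quad
    M \approx \frac{7QJ}{\sigma^2}\log\left(\frac{4N}{\xi}\right).
\]
This is equivalently a condition of the form
\[
    J p_J \le \frac{c\,\xi\,\sigma^2}{Q\log(4N/\xi)}
\]
for an absolute constant $c$. Relative admissibility, $Jp_J\to0$, is exactly what guarantees that such a $J$ exists. The first step is to verify this reading of \eqref{eq:weight-window-choice}, up to constants, and isolate the quantity $\beta\coloneqq Q\log(4N/\xi)/\sigma^2$.

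The second step makes the choice of $J$ quantitative using Markov's inequality at order $r$. Applied to $(\cos\gamma)^{-1}$, it gives
\[
    p_J=\Prob\bigl[(\cos\gamma(X))^{-1}\ge J^{1/2}\bigr]\le C_r J^{-r/2}.
\]
Hence
\[
    Jp_J \le C_r J^{1-r/2},
\]
and this decays precisely because $r>2$.

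The third step chooses $J$. The condition $C_r J^{1-r/2}\le c\,\xi/\beta$ holds as soon as
\[
    J \ge \left(\frac{C_r\beta}{c\,\xi}\right)^{2/(r-2)}.
\]
We also require $J\ge1$. Taking $J$ equal to the maximum of $1$ and this threshold, and setting $M=\lceil 6\beta J\rceil$ (which lies in the window of the theorem, perhaps after a slight enlargement of $J$), gives
\[
    M=O(\beta J)
    =O\left(\beta^{1+2/(r-2)}(C_r/\xi)^{2/(r-2)}\right).
\]
Since $1+2/(r-2)=r/(r-2)$, this is exactly the claimed bound.

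The main obstacle is the first step: pinning down how $\xi$ enters \eqref{eq:weight-window-choice}. If the truncation budget is written as $Mp_J\le\xi/2$ with $M$ tied to $J$, the algebra above goes through. If instead the theorem's proof uses a different split, for example a conditional Gramian bound or a budget of the form $p_J\le\xi/(2M)$, only the absolute constants should change, not the exponents. I would re-derive that inequality directly from the union bound in the theorem's proof to confirm the exponents $2/(r-2)$ on $C_r/\xi$ and $r/(r-2)$ on $\beta$. A minor point is that $\log(4N/\xi)$ grows as $\xi$ shrinks; it is simply carried inside $\beta$.
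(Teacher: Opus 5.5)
Your proposal is correct and follows essentially the same route as the paper: Markov's inequality at order $r$ gives $Jp_J\le C_rJ^{1-r/2}$. You then choose $J\asymp(C_r\beta/\xi)^{2/(r-2)}$ with $\beta=Q\log(4N/\xi)/\sigma^2$ to satisfy the window condition, and $M\asymp\beta J$ gives the exponent $r/(r-2)$.

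Your guessed form of \eqref{eq:weight-window-choice} is right up to constants, since the actual requirement is $14\beta Jp_J<\log(2/(2-\xi))$ and $\log(2/(2-\xi))\ge\xi/2$. You should also enforce $J\ge\sigma^2/c_0^2$ (needed for the inverse-Gramian bound), but like the paper's own loose handling of the lower-bound requirements on $J$, this only changes constants when $\sigma$ is bounded.
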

\noindent The proof is given in \Cref{app:quant-sample-complexity}.

Under the additional assumption of reference weight concentration, we obtain a
stable quadrature rule with prescribed probability at least \(1-\xi\), and
hence in particular existence in the sense of
\Cref{sssec:preliminaries-quadrature}.

\begin{restatable}[Stable intermediate least squares rule]{theorem}{stableIntermediateRule}
\label{thm:stable-intermediate-rule}
Fix an approximation weight \(\mathsf W\). Suppose that
\((\mu,\rho,V,\mathcal L)\) is relatively admissible and satisfies the
reference-weight concentration condition
\eqref{eq:reference-weight-concentration}. Then, for every \(\sigma>0\) and every $0<\xi<1$,
there exist \(J\geq1\) and an integer \(M\) satisfying
\begin{equation}
\label{eq:stable-M-window}
    \frac{6QJ}{\sigma^2}\log\left(\frac{6N}{\xi}\right)
    \leq
    M
    \leq
    \frac{7QJ}{\sigma^2}\log\left(\frac{6N}{\xi}\right),
\end{equation}
such that, with probability at least $1-\xi$, the least squares quadrature rule is
well-defined, exact on \(V\), and satisfies
\[
    \kappa_{\mathsf W}(\mathcal Q_M)
    \leq
    (1+\sigma)
    \left[
        \left\|
            \frac{L}{\mathsf W}
        \right\|_{L^1_\mu(D)}
        +
        C
        \left\|
            \mathsf K_{\mathcal L,\rho,\mathsf W}(X)
        \right\|_{\psi_1}
        \frac{\log(6/\xi)}{\sqrt{M}}
    \right],
    \qquad X\sim\rho,
\]
where \(C>0\) is a universal constant.
\end{restatable}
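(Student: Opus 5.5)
The plan is to combine \Cref{thm:finite-sample-weight-approximation} (relative case) with a Bernstein-type tail bound for the empirical mean of the sub-exponential variable \(\mathsf K_{\mathcal L,\rho,\mathsf W}(X)\), and then take a union bound. The probability budget \(\xi\) is split into two parts. We apply \Cref{thm:finite-sample-weight-approximation} with \(\star=\mathrm{rel}\) and failure probability \(2\xi/3\); this explains why \(\log(4N/\xi)\) becomes \(\log(4N/(2\xi/3))=\log(6N/\xi)\) in \eqref{eq:stable-M-window}. That theorem yields \(J\geq1\) and an integer \(M\) in exactly the window \eqref{eq:stable-M-window}. On an event \(E_1\) with \(\Prob[E_1]\geq1-2\xi/3\), the rule is well-defined, exact on \(V\), and satisfies \(|Mw_m-\omega_{\mathcal L,\rho}(x_m)|\leq\sigma|\omega_{\mathcal L,\rho}(x_m)|\) for all \(m\).

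On \(E_1\), the triangle inequality gives \(|w_m|\leq(1+\sigma)|\omega_{\mathcal L,\rho}(x_m)|/M\). Inserting this into the coefficient bound \eqref{eq:condition-number} gives \eqref{eq:condition-number-reference-weight-bound}, namely \(\kappa_{\mathsf W}(\mathcal Q_M)\leq(1+\sigma)\frac1M\sum_m\mathsf K_{\mathcal L,\rho,\mathsf W}(x_m)\). With \(M\) fixed by the first step, it remains to control this empirical mean. Its expectation equals \(\|L/\mathsf W\|_{L^1_\mu(D)}\) by \eqref{eq:reference-weight-mean}.

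For the second event, set \(Y_m=\mathsf K_{\mathcal L,\rho,\mathsf W}(x_m)\). These are iid, nonnegative, and sub-exponential with \(\|Y_m\|_{\psi_1}=:\kappa_0<\infty\). Centering costs at most a universal factor, \(\|Y_m-\E Y_m\|_{\psi_1}\lesssim\kappa_0\). Bernstein's inequality for sub-exponential variables then gives
\[
\Prob\Bigl[\tfrac1M\textstyle\sum_m(Y_m-\E Y)\geq t\Bigr]\leq\exp\Bigl(-c\min\bigl(\tfrac{Mt^2}{\kappa_0^2},\tfrac{Mt}{\kappa_0}\bigr)\Bigr).
\]
We choose \(t=C\kappa_0\log(6/\xi)/\sqrt M\). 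Since \(\log(6/\xi)\geq\log 6>1\) and \(M\geq1\), both branches of the minimum are bounded below by a constant times \(\log(6/\xi)\):
\[
\frac{Mt^2}{\kappa_0^2}=C^2\log^2(6/\xi)\geq C^2\log(6/\xi)\log 6,
\]
and
\[
\frac{Mt}{\kappa_0}=C\sqrt M\log(6/\xi)\geq C\log(6/\xi).
\]
Taking \(C\) large enough relative to \(c\) therefore makes the probability at most \(\xi/3\); call the complementary event \(E_2\). This deliberately uses \(\log\) rather than \(\sqrt{\log}\), so a single universal constant handles both regimes. The resulting bound is independent of how \(M\) was chosen. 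This matters because the \(x_m\) are iid regardless, and \(M\) is a deterministic integer selected in the first step. Consequently, no conditioning issues arise between \(E_1\) and \(E_2\), even though the two events are dependent; the union bound needs no independence.

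On \(E_1\cap E_2\), which has probability at least \(1-\xi\), we combine the two estimates to obtain the stated bound on \(\kappa_{\mathsf W}(\mathcal Q_M)\). The main point to check is the first step. We must confirm that \Cref{thm:finite-sample-weight-approximation} can be invoked with a rescaled failure probability, giving the window in terms of \(\log(6N/\xi)\). We must also confirm that its choice of \(J\), via \eqref{eq:weight-window-choice}, is unaffected by the additional event. Since that theorem holds for every \(0<\xi'<1\), substituting \(\xi'=2\xi/3\) suffices. If the constants inside its proof tie \(J\) to \(\xi\), we instead rerun its union bound with one extra event of mass \(\xi/3\). The remaining work is routine bookkeeping of universal constants in the sub-exponential Bernstein inequality.
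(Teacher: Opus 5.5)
Your proposal is correct and is essentially the paper's proof. The paper reruns the sample-size window lemma with $\varepsilon=\beta=\xi/3$ and adds the sub-exponential Bernstein event with $\eta=\xi/3$, which is exactly what your black-box call of the finite-sample weight theorem at failure probability $2\xi/3$ unpacks to: condition \eqref{eq:weight-window-choice} with $\xi'=2\xi/3$ becomes \eqref{eq:stable-window-choice}, and the paper inverts Bernstein to $\sqrt{\log(6/\xi)/M}+\log(6/\xi)/M$ before absorbing it into $\log(6/\xi)/\sqrt M$, where you plug that rate in directly.
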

\noindent The proof is given in \Cref{subsec:intermediate-condition-numbers}; the additional requirement needed under absolute weight control is discussed in Remark \ref{rem:absolute-condition-number-reduction}.

These are our most general stability results: the least squares weights are close
to their limiting values, while the induced quadrature rule has controlled
weighted condition number. With additional structure, the same weight
approximation result also yields positive quadrature rules.

\subsection{Quadrature with real and positive weights}
We define some additional properties of $(V,\mathcal{L})$ that enable us to
understand when we achieve quadrature rules with positive weights.
We say that 
\(V\) is closed under complex conjugation if,
\[
    v\in V
    \qquad\Longrightarrow\qquad
    \overline v\in V .
\]
Define the real part of \(V\) by
\begin{equation}
\label{eq:VR-def}
    V_{\mathbb R}
    =
    \{v\in V: v=\overline v\}.
\end{equation}
Then \(V_{\mathbb R}\) is a real vector space and, if $V$ is closed under complex conjugation,
\[
    v
    =
    \frac{v+\overline v}{2}
    +
    i\,\frac{v-\overline v}{2i} \hskip 10pt \Longrightarrow \hskip 10pt
    V = V_{\mathbb R}\oplus iV_{\mathbb R}.
\]
Thus, if \(N=\dim_{\mathbb C}V\) and $V$ is closed under complex conjugation, then
\[
    \dim_{\mathbb R}V_{\mathbb R}=N,
    \qquad
    \dim_{\mathbb R}V=2N.
\]
In particular, any \(V\) closed under conjugation admits an orthonormal basis
consisting of $N$ real-valued functions.

\begin{definition}[Real-preserving functional]
\label{def:real-preserving}
Assume that \(V\) is closed under complex conjugation.  We say that
\(\mathcal L\in V^*\) is \emph{real-preserving} if
\[
    \mathcal L(v)\in\mathbb R
    \qquad
    \text{for every }v\in V_{\mathbb R}.
\]
\end{definition}

The following observation relates this condition to the Riesz
representer of \(\mathcal L\).

\begin{restatable}{lemma}{realPreservingRepresenter}
\label{lem:real-preserving-representer}
Assume that \(V\) is closed under complex conjugation.  Then
\(\mathcal L\) is real-preserving if and only if its \(L^2_\mu\)-Riesz
representer \(L\in V\) is real-valued, up to modification on a
\(\mu\)-null set.
\end{restatable}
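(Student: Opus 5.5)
The plan is to work in a real orthonormal basis, where everything reduces to the statement that the moment vector \(\bs\ell\) is real. Since \(V\) is closed under complex conjugation, \(V=V_{\mathbb R}\oplus iV_{\mathbb R}\) with \(\dim_{\mathbb R}V_{\mathbb R}=N\). As noted just before \Cref{def:real-preserving}, we may therefore choose an \(L^2_\mu\)-orthonormal basis \(\{\varphi_n\}_{n=1}^N\) of \(V\) consisting of real-valued functions. Concretely, take an orthonormal basis of the real inner-product space \(V_{\mathbb R}\). Its elements are orthonormal in \(L^2_\mu\) because the complex inner product of real functions is real, and they span \(V\) over \(\mathbb C\) by the direct sum decomposition. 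Since \(L\) is basis-independent, we may compute it in this basis via \eqref{eq:L-def}: \(L=\sum_n \overline{\ell_n}\varphi_n\) with \(\ell_n=\mathcal L(\varphi_n)\).

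For the forward direction, assume \(\mathcal L\) is real-preserving. Each \(\varphi_n\in V_{\mathbb R}\), so \(\ell_n=\mathcal L(\varphi_n)\in\mathbb R\). Hence \(L=\sum_n \ell_n\varphi_n\) is a real linear combination of real-valued functions, and so it is real-valued pointwise on the fixed realization of \(V\), not merely \(\mu\)-almost everywhere.

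For the converse, suppose \(L=\tilde L\) \(\mu\)-a.e. for some real-valued \(\tilde L\). For any \(v\in V_{\mathbb R}\), use \eqref{eq:L-integral}:
\[
\mathcal L(v)=\int_D\overline{L(x)}v(x)\,\dx\mu(x)=\int_D\tilde L(x)v(x)\,\dx\mu(x).
\]
The integrand \(\tilde L v\) is real-valued \(\mu\)-a.e., because \(v=\overline v\) as elements of \(V\), which means pointwise in the fixed realization, or at least \(\mu\)-a.e. The integral converges absolutely by Cauchy--Schwarz, since \(L,v\in L^2_\mu\). Therefore \(\mathcal L(v)\in\mathbb R\). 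Alternatively, one can expand \(L=\sum_n\overline{\ell_n}\varphi_n\) in the real basis. Because the \(\varphi_n\) are linearly independent in \(L^2_\mu\), the function \(\operatorname{Im}L=-\sum_n(\operatorname{Im}\ell_n)\varphi_n=0\) \(\mu\)-a.e. forces \(\operatorname{Im}\ell_n=0\). Then for \(v=\sum c_n\varphi_n\in V_{\mathbb R}\) the coefficients are real, and \(\mathcal L(v)=\sum c_n\ell_n\in\mathbb R\).

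The only delicate step is the existence of the real orthonormal basis and the meaning of \(v=\overline v\), given that \(V\) is identified with a pointwise realization while equality in \(L^2_\mu\) is only almost everywhere. I will handle this by treating the closure under conjugation and \(V_{\mathbb R}\) at the level of \(L^2_\mu\) classes. The real basis elements then have real-valued representatives. Since the paper fixes representatives for an arbitrary orthonormal basis, the resulting \(L\) agrees with a real function only up to a \(\mu\)-null set, which is exactly why the statement includes that qualification. With this bookkeeping in place, both directions are the short computations above.
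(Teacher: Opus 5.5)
Your proof is correct. Your converse is the same computation as the paper's: \(\mathcal L(v)=\int_D L v\,\dx\mu\in\mathbb R\) for real \(L\) and \(v\in V_{\mathbb R}\).

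The forward direction takes a different route. The paper never chooses a basis. It splits \(L=a+ib\) with \(a=(L+\overline L)/2\) and \(b=(L-\overline L)/(2i)\), and notes that closure under conjugation puts \(a,b\in V_{\mathbb R}\). Real-preservation then gives \(\int_D b v\,\dx\mu=0\) for all \(v\in V_{\mathbb R}\), and testing with \(v=b\) gives \(\|b\|=0\). You instead pass to a real orthonormal basis and invoke the basis-independence of \(L\). The moments \(\ell_n=\mathcal L(\varphi_n)\) are then real, so \(L=\sum_n\ell_n\varphi_n\) is visibly real.

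The paper's argument is shorter and needs only \(\operatorname{Re}L,\operatorname{Im}L\in V_{\mathbb R}\). Yours additionally needs a real orthonormal basis and the basis-invariance of \eqref{eq:L-def}; you justify both adequately. In exchange, your argument makes explicit that in a real orthonormal basis the moment vector \(\bs\ell\) is real. That is exactly the fact used later in \Cref{lemma:real-ls-weights} and in the nonnegative-pair discussion to get real weights.

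One small inconsistency: you say \(L\) is real ``pointwise, not merely \(\mu\)-almost everywhere.'' That holds only if closure under conjugation is read pointwise in the fixed realization. Once you adopt the class-level reading in your last paragraph, you only get \(\mu\)-a.e.\ reality. As you note, that is exactly what the statement asserts.
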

\noindent The proof is given in \Cref{app:real-preserving-representer}. Armed with this result, we can articulate the condition needed to discuss positivity of quadrature rules.
\begin{definition}[Nonnegative pair]
\label{def:strictly-positive-pair}
We say that the pair \((V,\mathcal L)\) is \emph{nonnegative} if
\begin{enumerate}
    \item \(V\) is closed under complex conjugation;
    \item \(\mathcal L\) is real-preserving on \(V\); and
    \item the Riesz representer \(L\) of \(\mathcal L\) admits a pointwise
    representative satisfying
    \[
        L(x)\geq 0,
        \qquad x\in D.
    \]
\end{enumerate}
\end{definition}

If $(V,\mathcal{L})$ is nonnegative as prescribed in
Definition~\ref{def:strictly-positive-pair}, one may work with a real-valued
orthonormal basis of \(V_{\mathbb R}\).  In that basis, the moment vector
$\bs{\ell}$ and the least squares weights are real. The large-$M$ limiting
least squares weights are given by, $\frac{L(x_m)}{\tau^2(x_m)}$,
which are strictly nonnegative.  Consequently, if the finite-sample weights
are relatively close to their limiting values with relative error
\(\sigma<1\), then the finite-sample least squares quadrature rule has
nonnegative weights, which are strictly positive at nodes where \(L\) is
positive.

\subsection{Main results: Positive quadrature}
\label{ssec:preliminaries-main}

\begin{theorem}[Positive intermediate least squares rule]
\label{thm:finite-sample-weight-approximation-positive}
  Assume the setup of \Cref{thm:finite-sample-weight-approximation}, and in addition suppose
  that \((V,\mathcal{L})\) is a nonnegative pair. Then, for every
  \(\sigma\in(0,1)\) and \(0<\xi<1\), there exist \(J\geq1\) and an
  integer \(M\) satisfying
  \[
    \frac{6QJ}{\sigma^2}\log\left(\frac{4N}{\xi}\right)
    \leq M \leq
    \frac{7QJ}{\sigma^2}\log\left(\frac{4N}{\xi}\right)
  \]
  such that, with probability at least \(1-\xi\), the least squares
  quadrature rule is well-defined, exact on \(V\), and has real, nonnegative
  weights. If \(L(x_m)>0\) at every sampled node, then all weights are
  strictly positive.
\end{theorem}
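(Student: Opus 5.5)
The plan is to reduce everything to \Cref{thm:finite-sample-weight-approximation} with \(\star=\mathrm{rel}\), applied after choosing a real orthonormal basis. Because \((V,\mathcal L)\) is a nonnegative pair, \(V\) is closed under conjugation. We can therefore pick an \(L^2_\mu\)-orthonormal basis \(\{\varphi_n\}\) of \(V\) consisting of real-valued functions, namely an orthonormal basis of \(V_{\mathbb R}\), which is also a \(\mathbb C\)-basis of \(V\). By \Cref{lem:real-preserving-representer} and the real-preserving property, \(\ell_n=\mathcal L(\varphi_n)\in\mathbb R\), so \(L=\bs\ell^*\bs\varphi\) is real. We fix the pointwise representative with \(L\ge0\) on \(D\).

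The quantities \(Q\), \(k\), \(\cos\gamma\) and \(L\) are basis independent, so the admissibility hypotheses are unchanged. The least squares weights are basis independent as well, since \(\mathcal Q_M(u)=\mathcal L(u_V^{(M)})\) does not refer to a basis. I would still verify reality directly from the explicit formula \eqref{eq:w-vector}. In the real basis, the evaluation matrix \(\mathbf V\), the diagonal matrix \(\mathbf W\) (with entries \(1/(M\tau^2(x_m))>0\)), the Gramian \(\mathbf G^{(M)}\), and \(\bs\ell\) are all real. Hence \(\mathbf w\) is real whenever \(\mathbf G^{(M)}\) is invertible.

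Next, given \(\sigma\in(0,1)\) and \(\xi\), invoke \Cref{thm:finite-sample-weight-approximation} with \(\star=\mathrm{rel}\) to obtain \(J\) and \(M\) in the stated window. This gives an event of probability at least \(1-\xi\) on which the rule is well defined, exact on \(V\), and satisfies
\[
\left|Mw_m-\omega_{\mathcal L,\rho}(x_m)\right|\le\sigma\,\omega_{\mathcal L,\rho}(x_m)
\]
for every \(m\). Here \(\omega_{\mathcal L,\rho}(x_m)=L(x_m)/\tau^2(x_m)\ge0\), and \(\tau^2(x_m)>0\) almost surely since \(\rho(\{\tau^2=0\})=0\). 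On this event \(Mw_m\) is real, so
\[
Mw_m\ge(1-\sigma)\,\omega_{\mathcal L,\rho}(x_m)\ge0,
\]
with strict inequality when \(L(x_m)>0\), because \(1-\sigma>0\). If \(L(x_m)=0\), the bound forces \(w_m=0\), which is still nonnegative.

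The only delicate point is the pointwise representative of \(L\). The reference weight uses \(L(x_m)\) evaluated via the fixed realization \(\bs\ell^*\bs\varphi\). The nonnegative representative may differ from it on a \(\mu\)-null set, which is also \(\rho\)-null because \(\rho\ll\mu\). So almost surely no sample lands there, and we intersect the good event with this probability-one event without changing its probability. I expect this to be the main technical care point; everything else follows directly from \Cref{thm:finite-sample-weight-approximation}.
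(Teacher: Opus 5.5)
Your proof is correct and follows essentially the same route as the paper, which proves this via \Cref{cor:positive-intermediate-rule}: apply the relative case of \Cref{thm:finite-sample-weight-approximation}, establish that the weights are real, and conclude $Mw_m\geq(1-\sigma)\,\omega_{\mathcal L,\rho}(x_m)\geq0$. The paper gets reality from the unitary change of basis in \Cref{lemma:real-ls-weights}, while you use a real orthonormal basis of $V_{\mathbb R}$ and the basis-independence of $\mathcal L(u_V^{(M)})$; your remark that the nonnegative representative of $L$ can differ from the realization $\bs\ell^*\bs\varphi$ only on a $\rho$-null set is extra care that the paper leaves implicit.
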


The condition of positive weights allows us to state a constructive, general Tchakaloff theorem.
\begin{theorem}[Constructive Generalized Tchakaloff]\label{thm:l2-tchakaloff}
  Assume $(\mu, \rho, V, \mathcal{L})$ satisfy the relative admissibility condition and that $(V,\mathcal{L})$ is a nonnegative pair. Then there is a quadrature rule with at most $N$ nodes in $D$ and positive weights that is exact on $V$.
\end{theorem}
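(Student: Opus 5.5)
The plan is to combine \Cref{thm:finite-sample-weight-approximation-positive} with a deterministic Carath\'eodory--Steinitz pruning step.

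\emph{Step 1: obtain a positive $M$-point rule.} Fix $\sigma=1/2$ and any $\xi\in(0,1)$, say $\xi=1/2$. Relative admissibility together with the nonnegative-pair hypothesis places us in the setting of \Cref{thm:finite-sample-weight-approximation-positive}. That result supplies $J\geq1$ and a finite integer $M$ such that, with probability at least $1/2>0$, the least squares rule \eqref{eq:ls-quad-procedure} is well-defined, exact on $V$, and has real nonnegative weights. Since this event has positive probability, it is nonempty. We may therefore fix a realization: nodes $x_1,\ldots,x_M\in D$ and weights $w_m\geq0$ with $\sum_m w_m v(x_m)=\mathcal L(v)$ for all $v\in V$. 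We require the nodes to lie in $D$ rather than merely in the support of $\rho$. Because $\rho$ is absolutely continuous with respect to $\mu$ and $D$ is the closed support of $\mu$, samples lie in $D$ almost surely, so the intersection of the success event with this almost-sure event is still nonempty.

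\emph{Step 2: discard zero weights and reduce to the real space.} Remove every node with $w_m=0$; exactness is unaffected. If $\mathcal L\neq0$, at least one weight is positive. In the degenerate case $\mathcal L=0$, the empty rule (or any single node with a positive weight and $L\equiv0$) answers the claim trivially. Because $V$ is closed under conjugation, it has a real orthonormal basis $\{\varphi_n\}_{n=1}^N\subset V_{\mathbb R}$. Exactness on $V$ is then equivalent to the $N$ real moment equations
\[
    \sum_{m} w_m\bs\varphi(x_m)=\overline{\bs\ell}\in\mathbb R^N,
\]
where $\overline{\bs\ell}$ is real because $\mathcal L$ is real-preserving (\Cref{lem:real-preserving-representer}). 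So $\overline{\bs\ell}$ lies in the conic hull of the vectors $\{\bs\varphi(x_m)\}$ in $\mathbb R^N$.

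\emph{Step 3: Carath\'eodory pruning.} Apply the conic Carath\'eodory theorem constructively. Suppose more than $N$ active nodes remain. Then the vectors $\bs\varphi(x_m)$ are linearly dependent in $\mathbb R^N$, so there is a nonzero real $\mathbf c$ with $\sum_m c_m\bs\varphi(x_m)=0$. Replacing $\mathbf c$ by $-\mathbf c$ if necessary, we can take some $c_m>0$. Replace the weights by $w_m-t c_m$ with $t=\min_{c_m>0} w_m/c_m$. This keeps every weight nonnegative, leaves the moment equations intact, and zeroes at least one weight. The same reduction applies if the active vectors are dependent even when there are at most $N$ of them, though this is not needed for the count. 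Iterating terminates after finitely many steps with at most $N$ active nodes, all in $D$, all with strictly positive weights, and still exact on $V$ by linearity.

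The main obstacle is Step 1, the only nondeterministic ingredient: one must make sure that ``probability at least $1-\xi$'' genuinely yields a realization of the sample with all nodes in $D$ and all required properties holding simultaneously. The justification is the positivity of the probability together with the almost-sure containment of samples in $D$, as argued in Step 1. Steps 2 and 3 are routine linear algebra; the only care needed is to use the real basis, so that the conic hull argument takes place in $\mathbb R^N$ and not in $\mathbb C^N$, where the count would be $2N$.
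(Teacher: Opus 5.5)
Your proposal is correct and follows the paper's proof: you fix a successful realization from \Cref{thm:finite-sample-weight-approximation-positive}, which has positive probability, and then run Carath\'eodory--Steinitz pruning in a real orthonormal basis of $V_{\mathbb R}$; this is exactly \Cref{thm:caratheodory} applied to $V_{\mathbb R}$, with exactness extended to $V$ by linearity. One small slip, which does not affect the argument: when $\mathcal L=0$, a single node with positive weight is exact only at a common zero of $V$, so only the empty rule works there, but that case is excluded anyway because relative admissibility presupposes $\mathcal L\neq0$.
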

The proof, a direct combination of \Cref{thm:finite-sample-weight-approximation-positive} and
the well-known Carath\'eodory--Steinitz pruning procedure, is given in
\Cref{subsec:reduced}. The result above is ``constructive'' because its proof
is explicitly algorithmic: randomized trials to compute least squares weights,
upon success, can be pruned (through Carath\'eodory--Steinitz-type linear algebra)
to achieve the conclusion. \Cref{thm:l2-tchakaloff} requires $V$ to be closed under conjugation; when $V$ is not closed under conjugation, a Tchakaloff-like result holds on the larger subspace that is the conjugate closure of $V$.
\begin{theorem}[Generalized Tchakaloff]\label{thm:l2-tchakaloff-conjugate}
  Let $(\mu, \rho, V, \mathcal{L})$ be given, and let
  \(\widetilde V = \mathrm{span}\{V, \overline{V}\}\) be the conjugate closure
  of \(V\). Let \(\widetilde{\mathcal L}\in\widetilde V^*\) satisfy
  \(\widetilde{\mathcal L}|_V=\mathcal L\). Assume that
  \((\mu,\rho,\widetilde V,\widetilde{\mathcal L})\) is relatively admissible
  and that \((\widetilde V,\widetilde{\mathcal L})\) is a nonnegative pair.
  Then there is a quadrature rule with at most
  \(P=\dim_{\mathbb C}\widetilde V\leq2N\) nodes in \(D\), with positive
  weights, that is exact on \(\widetilde V\), and hence on \(V\).
\end{theorem}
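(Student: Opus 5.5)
The plan is to reduce the statement directly to \Cref{thm:l2-tchakaloff}, applied to the enlarged tuple \((\mu,\rho,\widetilde V,\widetilde{\mathcal L})\), and then restrict exactness from \(\widetilde V\) back to \(V\).

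\textbf{Step 1: \(\widetilde V\) is a legitimate approximation space.} I would check that \(\widetilde V=\mathrm{span}\{V,\overline V\}\) is a finite-dimensional subspace of \(L^2_\mu(D)\) with \(P=\dim_{\mathbb C}\widetilde V\leq 2N\). This holds because \(\overline V\) is spanned by the \(N\) functions \(\overline{\varphi_n}\), each still in \(L^2_\mu\) since \(|\overline{\varphi_n}|=|\varphi_n|\). I would also confirm that \(\widetilde V\) is closed under complex conjugation, which is immediate since conjugation swaps \(V\) and \(\overline V\).

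\textbf{Step 2: fix pointwise representatives.} Choose an orthonormal basis of \(\widetilde V\); by the discussion after \eqref{eq:VR-def}, it can be taken real-valued. Fix pointwise representatives compatible with the fixed realization of \(V\), so that point evaluations of elements of \(V\) agree with those of the same elements regarded in \(\widetilde V\). This is needed so that exactness on \(\widetilde V\) genuinely implies exactness on \(V\) at the chosen nodes. I expect this bookkeeping to be the only subtle point: the representatives of \(V\) and of \(\widetilde V\) must be the same functions on \(D\), not merely equal \(\mu\)-a.e. I would handle it by defining \(\widetilde V\) pointwise as the span of the fixed \(\varphi_n\) and their pointwise conjugates, then orthonormalizing within that span.

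\textbf{Step 3: the remaining hypotheses are assumed.} The coverage condition \eqref{eq:sampling-coverage} for \(\widetilde V\), and \(Q<\infty\), are part of the assumed relative admissibility of \((\mu,\rho,\widetilde V,\widetilde{\mathcal L})\). The nonnegative-pair property of \((\widetilde V,\widetilde{\mathcal L})\) is assumed directly. So all hypotheses of \Cref{thm:l2-tchakaloff} hold with \(N\) replaced by \(P\).

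\textbf{Step 4: apply and restrict.} \Cref{thm:l2-tchakaloff} yields nodes \(x_1,\dots,x_{P'}\in D\), with \(P'\leq P\), and positive weights such that \(\sum_m w_m v(x_m)=\widetilde{\mathcal L}(v)\) for all \(v\in\widetilde V\). In the algorithmic reading, this comes from \Cref{thm:finite-sample-weight-approximation-positive} on \(\widetilde V\) followed by Carath\'eodory--Steinitz pruning to at most \(P\) active nodes. Since \(V\subset\widetilde V\) and \(\widetilde{\mathcal L}|_V=\mathcal L\), restricting to \(v\in V\) gives \(\mathcal Q(v)=\mathcal L(v)\), which completes the proof.
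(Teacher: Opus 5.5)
Your proposal is correct and matches the paper's proof, which is the one-line reduction of applying \Cref{thm:l2-tchakaloff} to $(\mu,\rho,\widetilde V,\widetilde{\mathcal L})$ and then restricting exactness from $\widetilde V$ to $V$ via $\widetilde{\mathcal L}|_V=\mathcal L$. Your extra bookkeeping goes beyond the paper, with one small correction: the coverage condition \eqref{eq:sampling-coverage} for $\widetilde V$ is not part of relative admissibility (which is only $Q<\infty$ plus the small-ball limit), but it follows from coverage for $V$, since $k(x)=0$ forces every $\varphi_n$, hence every $\overline{\varphi_n}$ and so every element of $\widetilde V$, to vanish at $x$.
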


\subsubsection{Relation to previous work}

Theorem~\ref{thm:l2-tchakaloff} should be viewed as a constructive
\(L^2_\mu\)-based result rather than as the most general abstract existence
theorem.  Schäfer and Ullrich~\cite{schafer_2025} recall Bisgaard's general
Tchakaloff theorem and extend it to complex-valued finite-dimensional
subspaces of \(L^1\), with node count determined by the effective real
dimension.  Our assumptions are stronger: we work in an \(L^2_\mu\) setting
and impose sampling and admissibility hypotheses tailored to weighted least
squares.  The gain is that the proof provides a randomized construction.

\subsection{Discussion: Relative/absolute admissibility and reference-weight concentration} \label{ssec:assumptions-discussion}

We provide some discussion below to interpret our formally abstract conditions of admissibility and reference weight concentration. For brevity, we focus on investigating \textit{relative} admissibility. 

A convenient sufficient condition for relative admissibility, Definition~\ref{ass:relative-admissibility}, is that \( \sec \gamma \propto \sqrt{k}/|L| 
    \in L^2_\rho(D) \) since under that condition,
\begin{equation}\label{eq:relative-admissibility-sufficient}
    Jp_J = \E_\rho \left[J \mathbf 1_{\{ (\cos \gamma (X))^{-1} \ge \sqrt J \}}\right] \le  
    \E_\rho\left[
        (\cos \gamma (X))^{-2}
        \mathbf 1_{\{ (\cos \gamma (X))^{-1} \ge \sqrt J \}}
    \right]
    \longrightarrow 0.
\end{equation}
Similarly, a convenient sufficient condition for absolute admissibility is
\(
    A_{\mathcal L, \rho}\in L^2_\rho(D).
\)
The sufficient condition $\sqrt{k}/|L| \in L^2_\rho$ motivates the observation that relative admissibility is in jeopardy only if $K = \infty$  or if $L_{\inf} = 0$, with the latter defined as
\begin{equation}
\label{eq:Linf}
    L_{\inf}
    =
    \inf_{x\in D}
        \frac{|L(x)|}{\|\mathcal L\|_{V^*}}
    =
    \inf_{x\in D}
        \frac{|\bs\ell^*\bs\varphi(x)|}{\|\bs\ell\|_2}.
\end{equation}
We provide below some examples demonstrating when the relative admissibility condition can be achieved when $K = \infty$ and/or $L_{\inf} = 0$. In all these examples, we assume $Q < \infty$.
\begin{enumerate}
  \item $K < \infty$ and $L_{\inf} > 0$: then the relative admissibility condition is satisfied.
  \item $K < \infty$ and $L_{\inf} = 0$: If $L$ has only isolated zeros, then relative admissibility holds so long as the probability of sampling around zeros of $L$ is not too large. In particular, if $D \subset \R^d$, $\rho$ has a bounded Lebesgue density, and $L$ has only a finite number of isolated zeros having algebraic order at most $\alpha > 0$, then relative admissibility holds if $\alpha < d/2$.
  \item $K = \infty$ and $L_{\inf} > 0$: Choose $\tau^2 = \tau^2_\mathrm{ind} =k/N$.
    \begin{itemize}
      \item Then relative admissibility is implied by the Christoffel tail condition
        \[
            \lim_{J\to\infty}
            J\int_{\{k>J\}} k(x)\,d\mu(x)
            =
            0,
        \]
      \item Let $\mu$ be Lebesgue measure on a compact $D \subset \R^d$, and suppose that all functions in $V$ have no singularity stronger than $\|\x - \mathbf{c}\|_2^{-\alpha}$ for any $\mathbf{c} \in \mathrm{int}(D)$ and $\alpha < d/4$. Then relative admissibility holds.
      \item Let $\mu$ on $\R^d$ satisfy $d \mu(\x) \leq C \exp(-\|\x\|_2^\alpha) d \x$ for sufficiently large $\|\x\|_2$ with any $C, \alpha > 0$. Let $V$ be any finite-dimensional polynomial subspace. Then relative admissibility holds.
    \end{itemize}
  \item If \(\rho\bigl(\{x\in D:L(x)=0 \textrm{ and } k(x)>0\}\bigr)>0\), then relative admissibility is not achievable.
\end{enumerate}
The proofs of these statements are provided in \Cref{app:subspaces}. These situations could be combined to generate relatively admissible tuples with both $K = \infty$ and $L_{\inf} = 0$.

For any fixed subspace $V$, our asymptotic characterizations of relative and absolute admissibility, i.e., $J p_J \rightarrow 0$ and $J q_J \rightarrow 0$, respectively, are stronger than necessary. The concentration proofs in \Cref{sec:posls} require these conditions only so that one can pick \textit{some} $J \geq 1$ sufficiently large satisfying $r_J \leq 1/2$ and
\begin{align*}
  J r_J \lesssim
  \frac{\sigma^2 \xi}{Q \log(N/\xi)},
\end{align*}
for $r_J \in \{p_J,q_J\}$.
Hence, the weaker conditions above are sufficient to replace the roles of relative and absolute admissibility, respectively. In principle these conditions also reveal how $J$ should be chosen. For example, if it is known \textit{a priori} that there is some $\beta > 1$ so that $r_J = \mathcal{O}(1/J^\beta)$ for large $J$, then choosing
\begin{align*}
  J \gtrsim \left( \frac{Q \log(N/\xi)}{\sigma^2 \xi} \right)^{1/(\beta-1)},
\end{align*}
is sufficient for the purposes of our theoretical results.

Interpretations for reference weight concentration, i.e., that if $X \sim \rho$ we have \(\mathsf K_{\mathcal L,\rho,\mathsf W}(X)
    =
    \frac{|L(X)|}{\tau^2(X)\mathsf W(X)}\) is subexponential, are more established. We refer to, e.g., \cite[Section 2.7]{vershynin2019high}.

\subsubsection{Mixture sampling}
\label{sssec:mixture-sampling}

In many cases we can provide a precise recommendation for achieving both
relative admissibility and reference-weight concentration. Since
\(\dx\rho=\tau^2\dx\mu\),
\[
    \mathsf K_{\mathcal L,\rho,\mathsf W}(x)\,\dx\rho(x)
    =
    \frac{|L(x)|}{\mathsf W(x)}\,\dx\mu(x).
\]
Thus, reference-weight concentration prevents \(\rho\) from severely
undersampling regions carrying substantial
\((|L|/\mathsf W)\,\dx\mu\)-mass. This suggests balancing the induced and
weighted reference measures. Provided
\(L/\mathsf W\in L^1_\mu(D)\), for \(0<\theta<1\) define
\begin{equation}
\label{eq:mixed-sampling-density}
    \tau_\theta^2(x)
    =
    \theta\frac{k(x)}{N}
    +
    (1-\theta)
    \frac{|L(x)|/\mathsf W(x)}
    {\|L/\mathsf W\|_{L^1_\mu(D)}}.
\end{equation}
Then the sampling measure
\(\dx\rho_\theta=\tau_\theta^2\dx\mu\) satisfies
\[
    Q\leq\frac{N}{\theta},
    \qquad
    \mathsf K_{\mathcal L,\rho_\theta,\mathsf W}(x)
    \leq
    \frac{\|L/\mathsf W\|_{L^1_\mu(D)}}{1-\theta}.
\]
Hence this choice retains near-optimal Gramian control while making the
weighted reference magnitude uniformly bounded.

\begin{restatable}[Mixture sampling]{lemma}{mixedSamplingRelativeAdmissibility}
\label{lemma:mixed-sampling-relative-admissibility}
Fix an approximation weight \(\mathsf W\) satisfying
\[
    \frac{L}{\mathsf W} \in L^1_\mu(D),\qquad\text{and,}\qquad
    \mathsf W^{-1}\in L^2_\mu(D),
\]
and assume that
\((\mu,\rho_{\mathrm{ind}},V,\mathcal L)\) satisfies the relative
admissibility condition, where the induced sampling measure \(\dx\rho_{\mathrm{ind}}(x)\) is as in \eqref{eq:induced_measure}. Then, for every \(\theta\in(0,1)\), the tuple
\((\mu,\rho_\theta,V,\mathcal L)\), with \(\rho_\theta\) defined by
\eqref{eq:mixed-sampling-density}, satisfies both relative admissibility and
reference-weight concentration with respect to \(\mathsf W\).
\end{restatable}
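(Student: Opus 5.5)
The plan is to use the mixture structure of \(\rho_\theta\) directly. Since both summands in \eqref{eq:mixed-sampling-density} are nonnegative and each integrates to one against \(\mu\), \(\tau_\theta^2\) is a valid density. Write \(\dx\nu\coloneqq \frac{|L|/\mathsf W}{\|L/\mathsf W\|_{L^1_\mu(D)}}\dx\mu\). Then
\[
    \rho_\theta=\theta\rho_{\mathrm{ind}}+(1-\theta)\nu .
\]
I would verify the three required properties separately: the coverage condition together with \(Q<\infty\), the small-ball condition \eqref{eq:relative-admissibility}, and sub-exponentiality of \(\mathsf K_{\mathcal L,\rho_\theta,\mathsf W}\).

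For coverage, note that \(\tau_\theta^2\ge\theta k/N\). Hence \(\tau_\theta^2=0\) forces \(k=0\), so \eqref{eq:sampling-coverage} holds, and \(k/\tau_\theta^2\le N/\theta\) wherever \(\tau_\theta^2>0\), giving \(Q\le N/\theta\). For reference-weight concentration, \(\tau_\theta^2\ge(1-\theta)|L|/(\mathsf W\|L/\mathsf W\|_{L^1_\mu})\) gives
\[
    \mathsf K_{\mathcal L,\rho_\theta,\mathsf W}\le B\coloneqq\frac{\|L/\mathsf W\|_{L^1_\mu}}{1-\theta}
\]
\(\rho_\theta\)-almost surely. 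A bounded random variable is sub-exponential: taking \(c=B/\log 2\) gives \(\E[\exp(\mathsf K/c)]\le 2\), so \(\|\mathsf K\|_{\psi_1}\le B/\log2<\infty\).

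The main step is relative admissibility. The quantity \(\cos\gamma\) in \eqref{eq:alignment-angle} does not depend on the sampling measure. So with \(S_J\coloneqq\{x:(\cos\gamma(x))^{-1}\ge\sqrt J\}\), linearity of the mixture gives
\[
    p_J(\rho_\theta)=\theta\,p_J(\rho_{\mathrm{ind}})+(1-\theta)\,\nu(S_J).
\]
The first term satisfies \(J p_J(\rho_{\mathrm{ind}})\to0\) by hypothesis, so it remains to show \(J\nu(S_J)\to0\); this is where \(\mathsf W^{-1}\in L^2_\mu\) enters.

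For \(J>1\), points with \(k=0\) have \(\cos\gamma=1\) and so lie outside \(S_J\). On \(S_J\) we therefore have \(\sqrt J\,|L|\le\|\mathcal L\|_{V^*}\sqrt k\), i.e.\ \(J^2|L|^2\mathbf 1_{S_J}\le J\|\mathcal L\|_{V^*}^2 k\,\mathbf 1_{S_J}\). Cauchy--Schwarz then gives
\[
    J\int_{S_J}\frac{|L|}{\mathsf W}\dx\mu
    \le\|\mathsf W^{-1}\|_{L^2_\mu}\Bigl(\int_{S_J}J^2|L|^2\dx\mu\Bigr)^{1/2}
    \le\|\mathsf W^{-1}\|_{L^2_\mu}\|\mathcal L\|_{V^*}\Bigl(J\int_{S_J}k\,\dx\mu\Bigr)^{1/2}.
\]
Finally, \(\int_{S_J}k\,\dx\mu=N\rho_{\mathrm{ind}}(S_J)=N\,p_J(\rho_{\mathrm{ind}})\). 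The right-hand side is therefore \(O\bigl((Jp_J(\rho_{\mathrm{ind}}))^{1/2}\bigr)\to0\). Dividing by \(\|L/\mathsf W\|_{L^1_\mu}\) yields \(J\nu(S_J)\to0\), and hence \(Jp_J(\rho_\theta)\to0\), which completes relative admissibility.
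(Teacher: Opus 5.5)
Your proof is correct and follows essentially the same route as the paper. Both use the mixture decomposition $\rho_\theta=\theta\rho_{\mathrm{ind}}+(1-\theta)\nu$ and the same Cauchy--Schwarz step using $\mathsf W^{-1}\in L^2_\mu(D)$ and $\int_E k\,d\mu=N\rho_{\mathrm{ind}}(E)$; the only difference is that you work with the $J$-tail form, obtaining $J\nu(S_J)\lesssim (Jp_J(\rho_{\mathrm{ind}}))^{1/2}$, where the paper uses the equivalent small-ball form $\nu(E_\epsilon)\lesssim\epsilon\,\rho_{\mathrm{ind}}(E_\epsilon)^{1/2}$. Both also use the same uniform bound $\mathsf K_{\mathcal L,\rho_\theta,\mathsf W}\le\|L/\mathsf W\|_{L^1_\mu(D)}/(1-\theta)$ for reference-weight concentration, and your explicit checks of the coverage condition and of $\|\mathsf K\|_{\psi_1}\le B/\log 2$ are small details the paper leaves implicit.
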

\noindent The proof is given in \Cref{app:mixed-sampling-relative-admissibility}.

\begin{remark}
When \(\mathsf W\equiv1\), the condition
\(\mathsf W^{-1}\in L^2_\mu(D)\) reduces to $D$ having finite $\mu$-measure. 
Another weighting commonly considered in the case of exponential approximation weights is
\[
    \mathsf W^2(x)
    =
    \frac{\dx\mu}{\dx x}(x)
\implies
    \|\mathsf W^{-1}\|_{L^2_\mu(D)}^2
    =
    \int_D 1\,\dx x,
\]
which is infinite whenever \(D\) has infinite Lebesgue measure. 
We note that these cases can be overly restrictive on domains with infinite measure. 
In such cases, relative admissibility of the
mixture may instead be established directly from the corresponding weighted
tail condition.
One final weighting that is immediate from the construction of the sampling measure is
\[
    \mathsf W^{-2}(x)
    =
    \tau^2(x)
\implies
    \|\mathsf W^{-1}\|_{L^2_\mu(D)}^2
    =
    \int_D \dx\rho(x) = 1.
\]
\end{remark}

\subsubsection{Distinctions between the conditions}
\label{sssec:admissibility-comparison}
The three conditions we have introduced (relative admissibility, absolute admissibility, and reference weight concentration) control different probabilistic tails with $X \sim \rho$:
\begin{itemize}
  \item relative admissibility controls small values of \(\cos\gamma(X) \propto L(X)/\sqrt{k(X)}\),
  \item absolute admissibility controls large values of \(A_{\mathcal L, \rho}(X) \propto \sqrt{k(X)}/\tau^2(X)\), and
  \item reference-weight concentration controls large values of
  \[
      \mathsf K_{\mathcal L,\rho,\mathsf W}(X)
      =
      \frac{
          A_{\mathcal L,\rho}(X)\cos\gamma(X)
      }{\mathsf W(X)}
      =
      \frac{|L(X)|}
      {\tau^2(X)\mathsf W(X)}.
  \]
\end{itemize}
In particular, relative and absolute admissibility need not imply
reference-weight concentration.

For example, take \(\mathsf W\equiv1\), let
\[
    V=\operatorname{span}\{\varphi\},
    \qquad
    \|\varphi\|=1,
    \qquad
    L=\varphi,
\]
and use induced sampling
\(
    \dx\rho=|\varphi|^2\,\dx\mu.
\)
Then \(Q=1\), \(\cos\gamma=1\), and
\[
A_{\mathcal L,\rho}
=
\mathsf K_{\mathcal L,\rho,1}
=
|\omega_{\mathcal L,\rho}|
=
\frac1{|\varphi|}.
\]
Suppose that \(D\subset\mathbb R^d\), that \(\mu\) is locally comparable
to Lebesgue measure, and that \(\varphi\) has an isolated algebraic zero
\(x_0\) of order \(\beta>0\):
\[
    |\varphi(x)|
    \asymp
    \|x-x_0\|^\beta
    \qquad
    \text{and}
    \qquad
    \dx \rho \asymp \| x-x_0 \|^{2\beta} \dx x
\]
near \(x_0\). Then, as \(t\to\infty\),
\[
    \Prob_\rho\left[
        |\omega_{\mathcal L, \rho}(X)|\geq t
    \right]
    \asymp
    \int_{D}\mathbf 1_{\{\| x - x_0 \|^\beta \le t^{-1}\}} \| x - x_0 \|^{2\beta} \dx x
    \asymp
    \int_0^{t^{-1/ \beta }} r^{2\beta} r^{d-1}\dx r
    \asymp
    t^{-(2+d/\beta)}.
\]
The tail exponent \(2+d/\beta\) is strictly greater than two, and therefore
\[
    J\Prob_\rho\left[
        A_{\mathcal L, \rho}(X)\geq J^{1/2}
    \right]
    \asymp
    J^{-d/(2\beta)}
    \longrightarrow0.
\]
Thus, relative admissibility holds because \(\cos\gamma=1\), and absolute
admissibility holds because the tail has order greater than two. However,
 \(\mathsf K_{\mathcal L,\rho,1}(X)\) has only polynomial tails and is not
subexponential, so reference-weight concentration fails.

\subsection{Comparison to results in \cite{migliorati2022stable,schafer_2025}}

The actual computational procedure we propose is identical to the basic method
in \cite{migliorati2022stable}. (Although \cite{migliorati2022stable} proposes
additional adaptive variants.) However, our procedure allows $\mu$ to be an
infinite (still positive) measure, $V$ can contain complex-valued functions,
and the sampler $\tau^2$ and approximation weight $\mathsf W$ can be chosen
flexibly. In contrast, \cite{migliorati2022stable} considers the real-valued
function setting, assumes constants lie in $V$, and limits to $\mathcal{L}$ the
integral operator, $\tau^2 = k/N$, and $\mathsf W^{-2} \propto k$. When considering
positive quadrature and weights, our setup specialized to $K < \infty$ and
$L_{\inf} > 0$ coincides with the assumptions and sample complexity required in,
e.g., \cite[Theorem 6]{migliorati2022stable}.

Nevertheless, the force of our theoretical results
are incomparable to those in \cite{migliorati2022stable}; the latter
investigates probabilistic expectation of quadrature rule errors and contains
some discussion specialized to polynomial subspaces $V$. Our results instead
give prescribed-probability guarantees for the stability of individual sampled
rules, although the choice of \(J\) under relative admissibility may remain
implicit in more general scenarios.

Compared to \cite{schafer_2025} we provide an explicit constructive
procedure through randomness, but we require some comparatively mild
assumptions (relative admissibility) in order to guarantee that our procedures
succeed.

\section{Least squares approximations}
\label{sec:ls}

We now construct the intermediate \(M\)-point quadrature rule used in
\Cref{thm:finite-sample-weight-approximation}.  The rule is obtained by sampling
nodes \(x_1,\ldots,x_M\sim\rho\), forming a weighted least squares
approximation in \(V\), and applying \(\mathcal L\) to this approximation.
When the empirical Gramian is invertible, the resulting rule is exact on
\(V\).  This section derives the rule and identifies the limiting form of
its weights; finite-sample estimates are proved in
Section~\ref{sec:posls}.

\subsection{Weighted least squares}
Fix the sampling measure \(\dx\rho=\tau^2\dx\mu\) introduced in
\Cref{ssec:notation-sampling}, and let
\(x_1,\ldots,x_M\) be independent samples from \(\rho\). Since
\(\rho(\{\tau^2=0\})=0\), the factors \(1/\tau^2(x_m)\) are finite almost
surely. The factor \(1/\tau^2\) corrects for sampling from \(\rho\) while forming
\(L^2_\mu\) inner products.

Given \(u\in\mathcal U(D)\) and an orthonormal basis
\(\{\varphi_n\}_{n=1}^N\) for \(V\), define the weighted discrete least
squares approximant \(u_V^{(M)}\in V\) by
\begin{align}
    \label{eq:wls-functional}
    u_V^{(M)}
    =
    \argmin_{v\in V}
    \frac{1}{M}\sum_{m=1}^M
    \frac{|v(x_m)-u(x_m)|^2}{\tau^2(x_m)}.
\end{align}
Here and below, sampled values are taken for fixed pointwise-defined
representatives, as in Section~\ref{sec:preliminaries}. Writing
\[
    u_V^{(M)}=\sum_{n=1}^N c_n\varphi_n,
\]
the coefficient vector $\mathbf c=\trans{(c_1,\ldots,c_N)}\in\mathbb C^N$ satisfies the normal equations
\begin{align}
    \label{eq:normal-equations}
    \mathbf G^{(M)}\mathbf c
    =
    \mathbf V^*\mathbf W\mathbf u,
    \qquad
    \mathbf G^{(M)}
    =
    \mathbf V^*\mathbf W\mathbf V,
\end{align}
where
\begin{align}
    \label{eq:V-W-def}
    (\mathbf V)_{m,n}=\varphi_n(x_m),
    \qquad
    (\mathbf W)_{m,m}=\frac{1}{M\tau^2(x_m)},
    \qquad
    \mathbf u=\trans{(u(x_1),\ldots,u(x_M))}.
\end{align}
If $u\in V$ and $\mathbf G^{(M)}$ is invertible, then $u_V^{(M)}=u$. Hence, applying \(\mathcal L\) to \(u_V^{(M)}\)
defines a quadrature rule exact on \(V\).

\subsection{Empirical Gramian}

The empirical Gramian in \eqref{eq:normal-equations} has entries
\begin{align}
    \label{eq:GM-def}
    (\mathbf G^{(M)})_{j,k}
    =
    \frac{1}{M}\sum_{m=1}^M
    \frac{\overline{\varphi_j(x_m)}\varphi_k(x_m)}
    {\tau^2(x_m)}.
\end{align}
Consequently, by the coverage condition \eqref{eq:sampling-coverage},
\begin{equation}\label{eq:G-asymptotic}
    \mathbb E_\rho[\mathbf G^{(M)}]=\mathbf I,
    \qquad
    \text{and}
    \qquad
    \mathbf G^{(M)}\longrightarrow\mathbf I
    \quad\text{a.s}.
\end{equation}
Finite-sample concentration is governed by the sampling parameter \(Q\) defined in \eqref{eq:Q-def} and is developed in \Cref{sec:posls}.

\subsection{Least squares weights and their limit}\label{ssec:ls-weights}

Recall the moment vector \(\bs\ell\), Riesz representer \(L\), and
reference-weight function
\(
    \omega_{\mathcal L, \rho}(x)
    =\overline{L(x)}\tau^{-2}(x)
\)
from \Cref{sssec:preliminaries-L,ssec:weight-approximation}.

For
\(u_V^{(M)}=\sum_{n=1}^N c_n\varphi_n\), we have 
\[
    \mathcal L(u_V^{(M)})
    =
    \trans{\bs\ell}\mathbf c.
\]

If $\mathbf G^{(M)}$ is invertible, then substituting the solution of \eqref{eq:normal-equations} into the above gives
\begin{align}
    \label{eq:weights-explicit}
    \mathcal L\left(u_V^{(M)}\right)
    =
    \trans{\bs\ell}
    \left(\mathbf G^{(M)}\right)^{-1}
    \mathbf V^*\mathbf W\mathbf u
    =
    \trans{\mathbf w}\mathbf u.
\end{align}
Thus, the least squares construction defines an $M$-point quadrature rule with weights $\mathbf w\in\mathbb C^M$:
\begin{align}
    \label{eq:w-vector}
    \trans{\mathbf w}
    =
    \trans{\bs\ell}
    \left(\mathbf G^{(M)}\right)^{-1}
    \mathbf V^*\mathbf W.
\end{align}
The formula \eqref{eq:w-vector} is written in an arbitrary complex
orthonormal basis; however, under the real-structure assumptions used later for
positivity, the resulting weights are real.

\begin{restatable}[Reality of the least squares weights]{lemma}{realLeastSquaresWeights}
\label{lemma:real-ls-weights}
Assume that \(V\) is closed under complex conjugation and that
\(\mathcal L\) is real-preserving.  If \(\mathbf G^{(M)}\) is invertible,
then the least squares weight vector \(\mathbf w\) defined by
\eqref{eq:w-vector} belongs to \(\mathbb R^M\).
\end{restatable}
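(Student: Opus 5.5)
The plan is to show the weights are basis-independent, so that we may compute them in a real orthonormal basis in which every ingredient of \eqref{eq:w-vector} is real. The weights are characterized intrinsically: for each $u$, $\mathcal Q_M(u)=\mathcal L(u_V^{(M)})=\sum_m w_m u(x_m)$. The minimizer $u_V^{(M)}$ in \eqref{eq:wls-functional} does not depend on the chosen basis, and it is unique when $\mathbf G^{(M)}$ is invertible. Invertibility itself is basis-independent: a change of orthonormal basis $\bs\varphi=\mathbf U\bs\psi$, with $\mathbf U$ unitary, gives a unitarily congruent Gramian. Moreover $w_m=\mathcal L(u_V^{(M)}[e_m])$, where $e_m$ is any function equal to $\delta_{mj}$ at the nodes $x_j$. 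If nodes coincide, we instead read \eqref{eq:w-vector} directly as a linear map of the data vector $\mathbf u$. In either reading, $\mathbf w$ is determined by the nodes, $\tau^2$, $V$ and $\mathcal L$ alone.

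\textbf{Step 1.} Since $V$ is closed under conjugation, $V=V_{\mathbb R}\oplus iV_{\mathbb R}$ with $\dim_{\mathbb R}V_{\mathbb R}=N$. Apply Gram--Schmidt in $V_{\mathbb R}$ with the (real) inner product $\langle\cdot,\cdot\rangle$ to obtain real-valued functions $\varphi_1,\dots,\varphi_N$. These are orthonormal in $V_{\mathbb R}$, and they are also a complex orthonormal basis of $V$, because the complex inner product of real functions equals their real inner product and they span $V$ over $\mathbb C$.

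\textbf{Step 2.} In this basis every matrix entry is real. The entries $(\mathbf V)_{m,n}=\varphi_n(x_m)$ are real, $\mathbf W$ is real diagonal, so $\mathbf G^{(M)}=\trans{\mathbf V}\mathbf W\mathbf V$ is real symmetric, and its inverse is real. The moment vector has entries $\ell_n=\mathcal L(\varphi_n)\in\mathbb R$ because $\varphi_n\in V_{\mathbb R}$ and $\mathcal L$ is real-preserving. Hence $\trans{\mathbf w}=\trans{\bs\ell}(\mathbf G^{(M)})^{-1}\mathbf V^*\mathbf W$ is a product of real matrices and lies in $\mathbb R^M$.

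\textbf{Step 3.} Confirm basis-independence of \eqref{eq:w-vector} algebraically, which covers coincident nodes. Under $\bs\varphi=\mathbf U\bs\psi$ with $\mathbf U$ unitary, we have $\mathbf V_\varphi=\mathbf V_\psi\trans{\mathbf U}$ and $\bs\ell_\varphi=\mathbf U\bs\ell_\psi$. Therefore $\mathbf G_\varphi=\overline{\mathbf U}\mathbf G_\psi\trans{\mathbf U}$, and
\[
\trans{\bs\ell_\varphi}\mathbf G_\varphi^{-1}\mathbf V_\varphi^*\mathbf W=\trans{\bs\ell_\psi}\trans{\mathbf U}(\trans{\mathbf U})^{-1}\mathbf G_\psi^{-1}\overline{\mathbf U}^{-1}\overline{\mathbf U}\mathbf V_\psi^*\mathbf W,
\]
which collapses to the $\psi$-expression. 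This index bookkeeping with transposes versus adjoints is the only delicate point, and it is routine. Combining it with Step 2 shows $\mathbf w\in\mathbb R^M$ in the given (arbitrary) basis.
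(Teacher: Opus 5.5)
Your proposal is correct and follows essentially the same route as the paper. The paper also passes to a real orthonormal basis of $V_{\mathbb R}$, observes that the evaluation matrix, $\mathbf W$, the Gramian and the moment vector are then real, and checks by a unitary change of basis that \eqref{eq:w-vector} is basis-independent; your computation with $\bs\varphi=\mathbf U\bs\psi$ is the paper's computation with $\mathbf{\Psi}=\mathbf V\mathbf A$ written in the other direction, and your opening intrinsic characterization via $u_V^{(M)}$ is harmless but unnecessary, since Step 3 already settles basis-independence, including for coincident nodes.
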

\noindent The proof is straightforward and is given in \Cref{app:Real-ls-weights}.

We now turn to limiting behavior. Whenever \(\mathbf G^{(M)}\) is invertible,
the individual weight at the node \(x_m\) is
\begin{align}
    \label{eq:weights-individual}
    w_m
    =
    \frac{1}{M\tau^2(x_m)}
    \trans{\bs\ell}
    \left(\mathbf G^{(M)}\right)^{-1}
    \overline{\bs\varphi(x_m)}.
\end{align}
The following lemma records the almost-sure asymptotic behavior of these least squares weights.

\begin{lemma}
\label{lemma:wm-asymptotic}
Let $\{x_j\}_{j=1}^\infty$ be independent samples from $\rho$, and fix
\(m\ge1\). For \(M\ge m\) such that \(\mathbf G^{(M)}\) is invertible, let
\(w_m^{(M)}\) be the weight defined by \eqref{eq:weights-individual} using
the first \(M\) samples. Then, almost surely, \(\mathbf G^{(M)}\) is
invertible for all sufficiently large \(M\), and
\begin{align*}
    \lim_{M\to\infty} M w_m^{(M)}
    =
    \frac{\overline{L(x_m)}}{\tau^2(x_m)} = \omega_{\mathcal L, \rho}(x_m)
    \qquad
    \text{a.s}.
\end{align*}
\end{lemma}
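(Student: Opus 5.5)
The plan is to combine the strong law of large numbers for the empirical Gramian with continuity of matrix inversion, while treating the fixed node $x_m$ as a frozen random point. Write $w_m^{(M)}$ via \eqref{eq:weights-individual}, so that
\[
    M w_m^{(M)} = \frac{1}{\tau^2(x_m)}\,\trans{\bs\ell}\bigl(\mathbf G^{(M)}\bigr)^{-1}\overline{\bs\varphi(x_m)} .
\]
The factor $\tau^{-2}(x_m)\overline{\bs\varphi(x_m)}$ does not depend on $M$ and is almost surely finite, since $\rho(\{\tau^2=0\})=0$. Therefore only the $M$-dependence of $(\mathbf G^{(M)})^{-1}$ matters.

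First I establish $\mathbf G^{(M)}\to\mathbf I$ almost surely. Each entry of $\mathbf G^{(M)}$ is an empirical mean of the i.i.d. variables $Y_j=\overline{\varphi_a(x_j)}\varphi_b(x_j)/\tau^2(x_j)$. These are integrable: by Cauchy--Schwarz and the change of measure,
\[
    \E_\rho|Y_j|\le \int_D |\varphi_a||\varphi_b|\,\dx\mu\le 1 .
\]
Their mean is $\int_{\{\tau^2>0\}}\overline{\varphi_a}\varphi_b\,\dx\mu=\delta_{ab}$. Here the coverage condition \eqref{eq:sampling-coverage} is used: on $\{\tau^2=0\}$ we have $k=0$ $\mu$-a.e., hence every $\varphi_n$ vanishes there and the integral over $\{\tau^2>0\}$ equals the full inner product. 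Note that $Q<\infty$ is not needed for this step; integrability alone suffices for the strong law. Applying the SLLN to the finitely many entries yields \eqref{eq:G-asymptotic} almost surely.

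Next I obtain invertibility and convergence of the inverse. On the almost-sure event where $\mathbf G^{(M)}\to\mathbf I$, there is a (random) $M_0$ with $\|\mathbf G^{(M)}-\mathbf I\|_2<1/2$ for all $M\ge M_0$. A Neumann series argument then shows that $\mathbf G^{(M)}$ is invertible with $\|(\mathbf G^{(M)})^{-1}\|_2\le 2$. Moreover,
\[
    \|(\mathbf G^{(M)})^{-1}-\mathbf I\|_2\le 2\|\mathbf G^{(M)}-\mathbf I\|_2\to0 .
\]
This settles the first claim of the lemma.

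Finally, I intersect the above event with the almost-sure event $\{\tau^2(x_m)>0\}$ and pass to the limit, giving
\[
    M w_m^{(M)}\to \tau^{-2}(x_m)\,\trans{\bs\ell}\,\overline{\bs\varphi(x_m)}.
\]
It remains to identify this limit. Since $\trans{\bs\ell}\,\overline{\bs\varphi(x)}=\sum_n \ell_n\overline{\varphi_n(x)}=\overline{\sum_n\overline{\ell_n}\varphi_n(x)}=\overline{L(x)}$ by \eqref{eq:L-def}, the limit equals $\omega_{\mathcal L,\rho}(x_m)$ as in \eqref{eq:reference-weight}.

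The main (mild) subtlety is that $x_m$ is one of the samples entering $\mathbf G^{(M)}$, so the limit is not taken at a deterministic point. This causes no difficulty, because the SLLN event has probability one regardless of $x_m$ and the prefactor involving $x_m$ is fixed in $M$. Thus no conditioning argument is required; it suffices that the single term contributed by $x_m$ to $\mathbf G^{(M)}$ carries weight $1/M$, and this is already absorbed in the SLLN.
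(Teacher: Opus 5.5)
Your proof is correct and follows essentially the same route as the paper: almost-sure convergence $\mathbf G^{(M)}\to\mathbf I$, then continuity of matrix inversion, then substitution into \eqref{eq:weights-individual}, with the limit identified as $\overline{L(x_m)}/\tau^2(x_m)$. You also spell out what the paper leaves implicit, namely the integrability and coverage argument behind the strong law in \eqref{eq:G-asymptotic}, and a quantitative Neumann-series bound in place of appealing to continuity of inversion alone.
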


\begin{proof}
By \eqref{eq:G-asymptotic},  $\mathbf G^{(M)}\to\mathbf I$ almost surely. On
this event, \(\mathbf G^{(M)}\) is invertible for all sufficiently large
\(M\), and continuity of matrix inversion gives
\[
    \left(\mathbf G^{(M)}\right)^{-1}\to\mathbf I.
\]
Substituting this limit into \eqref{eq:weights-individual} yields
\[
    M w_m^{(M)}
    =
    \frac{
    \trans{\bs\ell}
    \left(\mathbf G^{(M)}\right)^{-1}
    \overline{\bs\varphi(x_m)}
    }
    {\tau^2(x_m)}
    \to
    \frac{\trans{\bs\ell}\,\overline{\bs\varphi(x_m)}}{\tau^2(x_m)}
    =
    \frac{\overline{L(x_m)}}{\tau^2(x_m)}.
\]
\end{proof}

Lemma~\ref{lemma:wm-asymptotic} shows that the least squares weights inherit
their limiting magnitude, and in the real-preserving case their limiting
sign, from \(\omega_{\mathcal L, \rho}(x)\).  Equivalently, the idealized
sample-dependent rule associated with these limiting weights is, for
\(u\in\mathcal U(D)\),
\begin{align}
    \label{eq:reference-weight-rule}
    \mathcal Q_M^{\mathrm{ref}}(u)
    =
    \frac{1}{M}\sum_{m=1}^M \omega_{\mathcal L, \rho}(x_m)u(x_m).
\end{align}
This rule is unbiased for \(\mathcal L\) on \(V\).  Indeed, for \(v\in V\),
\begin{align*}
    \mathbb E_\rho[\mathcal Q_M^{\mathrm{ref}}(v)]
    &=
    \int_D
    \omega_{\mathcal L, \rho}(x)v(x)
    \dx{\rho}(x)
    =
    \int_D \overline{L(x)}v(x)\dx{\mu}(x)
    =
    \mathcal L(v).
\end{align*}
Moreover, repeated samples carry identical reference weights, so their
coefficients do not cancel, and
\[
    \kappa_{\mathsf W}(\mathcal Q_M^{\mathrm{ref}})
    =
    \frac{1}{M}
    \sum_{m=1}^M
    \frac{
        |\omega_{\mathcal L,\rho}(x_m)|
    }{
        \mathsf W(x_m)
    }
    =
    \frac{1}{M}
    \sum_{m=1}^M
    \mathsf K_{\mathcal L,\rho,\mathsf W}(x_m).
\]
Hence, provided \(L/\mathsf W\in L^1_\mu(D)\),
\[
    \mathbb E_\rho\left[
        \kappa_{\mathsf W}(\mathcal Q_M^{\mathrm{ref}})
    \right]
    =
    \left\|
        \frac{L}{\mathsf W}
    \right\|_{L^1_\mu(D)}.
\]
Thus, the mean weighted reference magnitude is independent of the sampling
measure, while its fluctuations and the finite-sample Gramian depend on
\(\rho\).

The finite-sample rule \eqref{eq:weights-individual} differs from \(\mathcal Q_M^{\mathrm{ref}}\) through the deviation of the empirical Gramian from the identity. In \Cref{sec:posls}, we first derive a deterministic bound for \[ \left| M w_m-\omega_{\mathcal L, \rho}(x_m) \right| \] in terms of \(\|(\mathbf G^{(M)})^{-1}-\mathbf I\|_2\), and then combine Gramian concentration with the relative and absolute admissibility conditions of \Cref{ssec:weight-approximation}. The reference-weight concentration condition of \Cref{ssec:reference-weight-concentration} is subsequently used to control the weighted condition number of the resulting rule through the weighted reference magnitude \(\mathsf K_{\mathcal L,\rho,\mathsf W}\).

\section{Concentration results}
\label{sec:posls}

Section~\ref{sec:ls} showed that, whenever the empirical Gramian \(\mbf G^{(M)}\) is invertible, the least squares weights satisfy 
\[ 
    M w_m = \frac{ \trans{\bs\ell} \left(\mbf G^{(M)}\right)^{-1} \overline{\bs\varphi(x_m)} }{\tau^2(x_m)}, 
\] 
and converge, for each fixed \(m\), to the reference weight \(\omega_{\mathcal L,\rho}(x_m)\). We now quantify this approximation at finite \(M\). We first control the deviation of \(\left(\mbf G^{(M)}\right)^{-1}\) from the identity. We then combine this estimate with the relative and absolute admissibility conditions of \Cref{ssec:weight-approximation} to obtain uniform finite-sample control of the quadrature weights. Finally, the reference-weight concentration condition of \Cref{ssec:reference-weight-concentration} yields bounds on the weighted condition number of the intermediate rule.

\subsection{Concentration of the empirical Gramian}
\label{subsec:gramian-concentration}

Recall from \eqref{eq:GM-def} that 
\[ 
    \mbf G^{(M)} = \frac{1}{M} \sum_{m=1}^M \frac{ \overline{\bs\varphi(x_m)} \trans{\bs\varphi(x_m)} }{\tau^2(x_m)} 
    \qquad\text{and}\qquad 
    \E_\rho\!\left[\mbf G^{(M)}\right] = \mbf I.
\]
The size of each rank-one summand is controlled by the sampling parameter \(Q\) in \eqref{eq:Q-def}. The following estimate is the standard concentration result for randomized
weighted least squares, whose proof is given in \Cref{app:G-chernoff}; see also \cite{cohen_stability_2013,cohen_optimal_2017}.

\begin{restatable}[Gramian concentration]{lemma}{gramianConcentration}
\label{lemma:G-chernoff}
    Let \(\delta\in(0,1)\) and \(\varepsilon>0\).  Suppose that \(Q<\infty\).
    Then
    \begin{align}
        \label{eq:Ghat-sampling-criterion}
        M
        \ge
        \frac{3Q}{\delta^2}
        \log\left(\frac{2N}{\varepsilon}\right)
        \implies
        \Prob\left[
            \left\|\mbf G^{(M)}-\mbf I\right\|_2>\delta
        \right]
        \leq \varepsilon .
    \end{align}
\end{restatable}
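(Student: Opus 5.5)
The plan is to apply the matrix Chernoff inequality (Tropp's version for sums of independent positive semidefinite Hermitian matrices) to the summands of the empirical Gramian. Write
\[
    \mbf G^{(M)}=\sum_{m=1}^M \mbf X_m,
    \qquad
    \mbf X_m\coloneqq\frac{1}{M}\,\frac{\overline{\bs\varphi(x_m)}\,\trans{\bs\varphi(x_m)}}{\tau^2(x_m)}.
\]
The \(\mbf X_m\) are iid, Hermitian and positive semidefinite, and by \eqref{eq:G-asymptotic} satisfy \(\sum_m\E_\rho[\mbf X_m]=\mbf I\). The minimal and maximal eigenvalues of the mean are therefore both equal to \(1\).

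The first step is the uniform bound on the summands. Each \(\mbf X_m\) has rank one, so its spectral norm is
\[
    \|\mbf X_m\|_2=\frac{\|\bs\varphi(x_m)\|_2^2}{M\tau^2(x_m)}=\frac{k(x_m)}{M\tau^2(x_m)}.
\]
By the definition \eqref{eq:Q-def} of \(Q\), this is at most \(R\coloneqq Q/M\) \(\rho\)-almost surely. The exceptional null set, including \(\{\tau^2=0\}\), is irrelevant because it has \(\rho\)-measure zero, and the coverage condition ensures the mean is exactly \(\mbf I\).

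The second step applies the two Chernoff tails. For \(\delta\in(0,1)\) they give
\[
    \Prob[\lambda_{\min}(\mbf G^{(M)})\le 1-\delta]\le N\Big[\tfrac{e^{-\delta}}{(1-\delta)^{1-\delta}}\Big]^{1/R},
    \qquad
    \Prob[\lambda_{\max}(\mbf G^{(M)})\ge 1+\delta]\le N\Big[\tfrac{e^{\delta}}{(1+\delta)^{1+\delta}}\Big]^{1/R}.
\]
Next use the elementary scalar bounds \((1-\delta)\log(1-\delta)+\delta\ge\delta^2/2\) and \((1+\delta)\log(1+\delta)-\delta\ge\delta^2/3\) on \((0,1)\). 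The weaker of the two gives a combined bound of \(2N\exp(-\delta^2M/(3Q))\). Since \(\|\mbf G^{(M)}-\mbf I\|_2>\delta\) forces one of the two eigenvalue events, a union bound yields
\[
    \Prob\big[\|\mbf G^{(M)}-\mbf I\|_2>\delta\big]\le 2N\exp\Big(-\frac{\delta^2M}{3Q}\Big).
\]
The right-hand side is at most \(\varepsilon\) exactly when \(M\ge\frac{3Q}{\delta^2}\log(2N/\varepsilon)\), which is the claim. The strict/non-strict inequality mismatch is harmless, since the Chernoff bounds hold for the closed events.

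The only delicate point is verifying the two scalar inequalities with the constant \(3\). I would check them through derivatives at \(\delta=0\). For the upper tail, \(f(\delta)=(1+\delta)\log(1+\delta)-\delta-\delta^2/3\) has \(f(0)=f'(0)=0\) and \(f''(\delta)=1/(1+\delta)-2/3\ge 1/6>0\) on \([0,1]\), so \(f\ge 0\) there. The lower tail follows from the Taylor series of \(-(1-\delta)\log(1-\delta)\). The rest is bookkeeping, together with the remark that the result is stated for the complex Hermitian case, to which Tropp's theorem applies verbatim.
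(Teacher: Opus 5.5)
Your argument is essentially the paper's proof: the same rank-one decomposition with \(\|\mbf X_m\|_2\le Q/M\), Tropp's matrix Chernoff bound with mean \(\mbf I\), the scalar bounds \(c_-\ge\delta^2/2\) and \(c_+\ge\delta^2/3\), and a union bound over the two eigenvalue tails.

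The one flaw is in your verification of \(c_+\ge\delta^2/3\), which the paper simply asserts without proof. Your claim that \(f''(\delta)=1/(1+\delta)-2/3\ge 1/6\) on \([0,1]\) is false. This expression is negative for \(\delta>1/2\) and equals \(-1/6\) at \(\delta=1\), so the convexity argument fails on \((1/2,1]\).

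The inequality itself is still true, and the fix is short. The derivative \(f'(\delta)=\log(1+\delta)-2\delta/3\) vanishes at \(0\), increases on \([0,1/2]\), and decreases on \([1/2,1]\). Since \(f'(1)=\log 2-2/3>0\), it follows that \(f'\ge 0\) on all of \([0,1]\), and hence \(f\ge f(0)=0\) there.
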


We require the corresponding estimate for the inverse Gramian.
In all probability statements below, we interpret
\(\|(\mbf G^{(M)})^{-1}-\mbf I\|_2=+\infty\) when \(\mbf G^{(M)}\) is
singular. Thus, every event imposing a finite upper bound on this quantity
entails invertibility.

\begin{lemma}[Inverse Gramian concentration]
\label{lemma:F-chernoff}
    Let \(c_0 \coloneqq 1 - 2^{-1/2}\). Suppose that \(Q < \infty\), \(\delta\in(0,c_0]\), and \(\varepsilon>0\). If
    \begin{align}
        \label{eq:Fhat-sampling-criterion}
        M
        \ge
        \frac{6Q}{\delta^2}
        \log\left(\frac{2N}{\varepsilon}\right)
        \implies
        \Prob\left[
            \left\|
            \left(\mbf G^{(M)}\right)^{-1}-\mbf I
            \right\|_2>\delta
        \right]
        \leq \varepsilon .
    \end{align}
\end{lemma}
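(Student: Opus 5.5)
The plan is to derive \Cref{lemma:F-chernoff} directly from \Cref{lemma:G-chernoff}, with no new probability: apply the Gramian estimate at a smaller deviation level $\delta'$, then convert closeness of $\mbf G^{(M)}$ to $\mbf I$ into closeness of its inverse by a deterministic Neumann-series bound. Set $\mbf E\coloneqq \mbf G^{(M)}-\mbf I$. If $\|\mbf E\|_2\le\delta'<1$, then $\mbf G^{(M)}$ is invertible. Moreover
\[
    \left(\mbf G^{(M)}\right)^{-1}-\mbf I
    =
    -\left(\mbf G^{(M)}\right)^{-1}\mbf E,
\]
and therefore
\[
    \left\|\left(\mbf G^{(M)}\right)^{-1}-\mbf I\right\|_2
    \le
    \frac{\|\mbf E\|_2}{1-\|\mbf E\|_2}
    \le
    \frac{\delta'}{1-\delta'}.
\]
It is enough to choose $\delta'$ with $\delta'/(1-\delta')\le\delta$, that is, $\delta'\le\delta/(1+\delta)$.

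The remaining step is to match constants, using the assumption $\delta\le c_0=1-2^{-1/2}$. I take $\delta'\coloneqq\delta/\sqrt2$. Then $\delta'/(1-\delta')\le\delta$ is equivalent to $1/\sqrt2\le 1-\delta/\sqrt2$, i.e.\ $\delta\le\sqrt2-1$. Since $c_0\approx0.293<\sqrt2-1\approx0.414$, this holds on the whole allowed range of $\delta$. Also $\delta'<1$, so \Cref{lemma:G-chernoff} applies with $\delta'\in(0,1)$. Its sample condition becomes
\[
    M\ge \frac{3Q}{\delta'^2}\log\left(\frac{2N}{\varepsilon}\right)
    =\frac{6Q}{\delta^2}\log\left(\frac{2N}{\varepsilon}\right),
\]
which is exactly the hypothesis in \eqref{eq:Fhat-sampling-criterion}.

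On the complementary event $\|\mbf G^{(M)}-\mbf I\|_2\le\delta'$, the Gramian is invertible and the inverse deviation is at most $\delta$. So the event $\{\|(\mbf G^{(M)})^{-1}-\mbf I\|_2>\delta\}$, including the singular case under the convention $+\infty$, is contained in $\{\|\mbf G^{(M)}-\mbf I\|_2>\delta'\}$. That event has probability at most $\varepsilon$ by \Cref{lemma:G-chernoff}.

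The only real care needed is this constant bookkeeping, choosing $\delta'=\delta/\sqrt2$ so the factor $6$ comes out exactly. Everything else is routine. If the authors meant $c_0$ for some other purpose, the argument still goes through with slack.
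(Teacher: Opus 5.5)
Your proof is correct and is essentially the paper's argument. Both apply Lemma~\ref{lemma:G-chernoff} at level $\delta/\sqrt2$, then on the complementary event use $(\mathbf G^{(M)})^{-1}-\mathbf I=-(\mathbf G^{(M)})^{-1}(\mathbf G^{(M)}-\mathbf I)$ with the Neumann bound $\|(\mathbf G^{(M)})^{-1}\|_2\le 1/(1-\delta/\sqrt2)$; the paper bounds this by $\sqrt2$ before multiplying, whereas you check $\delta'/(1-\delta')\le\delta$ directly, which is the same condition $\delta\le\sqrt2-1$.
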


\begin{proof}
    On the event
    \[
        \left\|\mbf G^{(M)}-\mbf I\right\|_2
        \leq
        \frac{\delta}{\sqrt 2},
    \]
    the Gramian is invertible and
    \[
        \left\|
        \left(\mbf G^{(M)}\right)^{-1}
        \right\|_2
        \leq
        \frac{1}{1-\delta/\sqrt 2}
        \leq
        \sqrt 2.
    \]
    Therefore, on this event,
    \begin{align*}
        \left\|
        \left(\mbf G^{(M)}\right)^{-1}-\mbf I
        \right\|_2
        =
        \left\|
        \left(\mbf G^{(M)}\right)^{-1}
        \left(\mbf I-\mbf G^{(M)}\right)
        \right\|_2
        \leq
        \sqrt 2
        \left\|
        \mbf G^{(M)}-\mbf I
        \right\|_2
        \leq \delta .
    \end{align*}
    Hence,
    \begin{align*}
        \Prob\left[
            \left\|
            \left(\mbf G^{(M)}\right)^{-1}-\mbf I
            \right\|_2>\delta
        \right]
        &\leq
        \Prob\left[
            \left\|\mbf G^{(M)}-\mbf I\right\|_2
            >
            \frac{\delta}{\sqrt 2}
        \right].
    \end{align*}
    Applying Lemma~\ref{lemma:G-chernoff} with
    \(\delta/\sqrt 2\) in place of \(\delta\) gives the result.
\end{proof}

\subsection{Deterministic absolute and relative weight bounds}
\label{subsec:deterministic-weight-bounds}

We now convert inverse-Gramian concentration into pointwise control of the
least squares weights.  The following deterministic estimate is the basic
ingredient.  It separates the numerical stability condition
\(
    \left\|
    \left(\mbf G^{(M)}\right)^{-1}-\mbf I
    \right\|_2 \leq \delta
\)
from the sampling conditions imposed on the individual nodes. Recall that
\[
    \omega_{\mathcal L,\rho}(x)
    =
    \frac{\overline{L(x)}}{\tau^2(x)},
    \qquad\text{and}\qquad
    A_{\mathcal L,\rho}(x)
    =
    \|\mathcal L\|_{V^*}
    \frac{\sqrt{k(x)}}{\tau^2(x)}.
\]

\begin{lemma}[Deterministic weight perturbation]
\label{lemma:deterministic-weight-perturbation}

Assume that \(\mbf G^{(M)}\) is invertible. Then, for every
\(m=1,\ldots,M\),
\begin{equation}
\label{eq:deterministic-absolute-weight-bound}
    \left|
        M w_m-\omega_{\mathcal L,\rho}(x_m)
    \right|
    \leq
    \left\|
        \left(\mbf G^{(M)}\right)^{-1}-\mbf I
    \right\|_2
    A_{\mathcal L,\rho}(x_m).
\end{equation}
Moreover, whenever \(\cos\gamma(x_m)>0\),
\begin{equation}
\label{eq:deterministic-relative-weight-bound}
    \frac{\left|
        M w_m-\omega_{\mathcal L,\rho}(x_m)
    \right|}{\left|
        \omega_{\mathcal L,\rho}(x_m)
    \right|}
    \leq
    \frac{
        \left\|
            \left(\mbf G^{(M)}\right)^{-1}-\mbf I
        \right\|_2
    }{\cos\gamma(x_m)}
    .
\end{equation}
\end{lemma}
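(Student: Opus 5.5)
The plan is to write both the finite-sample and the reference weights as a single bilinear form in the coefficient space $\mathbb C^N$ and then apply Cauchy--Schwarz with the operator norm. Set $\mathbf E \coloneqq (\mathbf G^{(M)})^{-1}-\mathbf I$. By \eqref{eq:weights-individual},
\[
    M w_m = \frac{\trans{\bs\ell}(\mathbf I+\mathbf E)\overline{\bs\varphi(x_m)}}{\tau^2(x_m)}.
\]
From \eqref{eq:L-def} we have $\overline{L(x_m)}=\trans{\bs\ell}\,\overline{\bs\varphi(x_m)}$, so $\omega_{\mathcal L,\rho}(x_m)=\trans{\bs\ell}\,\overline{\bs\varphi(x_m)}/\tau^2(x_m)$. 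Subtracting the two expressions gives the exact identity
\[
    M w_m-\omega_{\mathcal L,\rho}(x_m)=\frac{\trans{\bs\ell}\,\mathbf E\,\overline{\bs\varphi(x_m)}}{\tau^2(x_m)}.
\]
Here $\tau^2(x_m)>0$ because $\rho(\{\tau^2=0\})=0$, which we use implicitly for sampled nodes; if one wants a deterministic statement, nodes with $\tau^2(x_m)=0$ can simply be excluded.

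For the absolute bound \eqref{eq:deterministic-absolute-weight-bound}, I will bound the numerator by $|\trans{\bs\ell}\mathbf E\overline{\bs\varphi(x_m)}|\le \|\bs\ell\|_2\,\|\mathbf E\|_2\,\|\bs\varphi(x_m)\|_2$. This uses Cauchy--Schwarz, the fact that transposition and conjugation do not change Euclidean norms, and the definition of the spectral norm. Then I substitute $\|\bs\ell\|_2=\|\mathcal L\|_{V^*}$ from \eqref{eq:L-integral} and $\|\bs\varphi(x_m)\|_2=\sqrt{k(x_m)}$ from the identity $k=\sum_n|\varphi_n|^2$. Dividing by $\tau^2(x_m)$ gives exactly $\|\mathbf E\|_2\,A_{\mathcal L,\rho}(x_m)$.

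For the relative bound \eqref{eq:deterministic-relative-weight-bound}, I divide the absolute bound by $|\omega_{\mathcal L,\rho}(x_m)|=|L(x_m)|/\tau^2(x_m)$. The factors of $\tau^2$ cancel, leaving
\[
    \|\mathbf E\|_2\,\frac{\|\mathcal L\|_{V^*}\sqrt{k(x_m)}}{|L(x_m)|}=\frac{\|\mathbf E\|_2}{\cos\gamma(x_m)},
\]
by \eqref{eq:alignment-angle}. One small point needs care: the hypothesis $\cos\gamma(x_m)>0$ must guarantee $L(x_m)\neq0$. This is where the convention $\cos\gamma=1$ when $k=0$ matters. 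If $k(x_m)=0$, then $\bs\varphi(x_m)=0$, so both $w_m$ and $\omega_{\mathcal L,\rho}(x_m)$ vanish and the quotient is degenerate under $0/0=0$, so the inequality holds trivially. If $k(x_m)>0$, then $\cos\gamma(x_m)>0$ forces $L(x_m)\neq0$ and the division is legitimate. Apart from this bookkeeping there is no real obstacle: the content is the exact linear identity above followed by a single operator-norm estimate.
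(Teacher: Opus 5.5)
Your proof is correct and follows the paper's argument: you derive the exact identity \(Mw_m-\omega_{\mathcal L,\rho}(x_m)=\trans{\bs\ell}\,[(\mathbf G^{(M)})^{-1}-\mathbf I]\,\overline{\bs\varphi(x_m)}/\tau^2(x_m)\), apply a single Cauchy--Schwarz/operator-norm estimate, and get the relative bound from \(|\omega_{\mathcal L,\rho}|=A_{\mathcal L,\rho}\cos\gamma\), which you use by dividing directly. Your explicit handling of the degenerate case \(k(x_m)=0\), where \(\cos\gamma=1\) by convention but the quotient is \(0/0\), is a small point the paper leaves implicit.
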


\begin{proof}
By the explicit formula for the least squares weights in \eqref{eq:weights-individual},
\begin{align*}
    M w_m-\omega_{\mathcal L,\rho}(x_m)
    &=
    \frac{
        \trans{\bs\ell}
        \left[
            \left(\mbf G^{(M)}\right)^{-1}-\mbf I
        \right]
        \overline{\bs\varphi(x_m)}
    }{\tau^2(x_m)}.
\end{align*}
Therefore,
\begin{align*}
    \left|
        M w_m-\omega_{\mathcal L,\rho}(x_m)
    \right|
    &\leq
    \frac{
        \|\bs\ell\|_2
        \left\|
            \left(\mbf G^{(M)}\right)^{-1}-\mbf I
        \right\|_2
        \|\bs\varphi(x_m)\|_2
    }{\tau^2(x_m)} 
    =
    \left\|
        \left(\mbf G^{(M)}\right)^{-1}-\mbf I
    \right\|_2
    A_{\mathcal L,\rho}(x_m),
\end{align*}
where we used \(\|\bs\ell\|_2=\|\mathcal L\|_{V^*}\) and \(\|\bs\varphi(x_m)\|_2=\sqrt{k(x_m)}.\)
This proves \eqref{eq:deterministic-absolute-weight-bound}. Finally,
\(
    \left|\omega_{\mathcal L,\rho}(x)\right|
    =
    A_{\mathcal L,\rho}(x)\cos\gamma(x).
\)
When \(\cos\gamma(x_m)>0\), substituting
\[
    A_{\mathcal L,\rho}(x_m)
    =
    \frac{
        |\omega_{\mathcal L,\rho}(x_m)|
    }{\cos\gamma(x_m)}
\]
into \eqref{eq:deterministic-absolute-weight-bound} gives
\eqref{eq:deterministic-relative-weight-bound}.
\end{proof}

The two admissibility conditions in \Cref{ssec:weight-approximation} correspond to controlling the two
amplification factors appearing in
Lemma \ref{lemma:deterministic-weight-perturbation}.

\begin{corollary}[Good-node weight bounds]
\label{cor:deterministic-good-node-bounds}
Let \(J\geq1\), \(\sigma>0\), and suppose that
\[
    \left\|
        \left(\mbf G^{(M)}\right)^{-1}-\mbf I
    \right\|_2
    \leq
    \frac{\sigma}{\sqrt J}.
\]
Then the following statements hold.
\begin{enumerate}
    \item If
    \(
        A_{\mathcal L,\rho}(x_m)<\sqrt J,
    \)
    then
    \(
        \left|
            M w_m-\omega_{\mathcal L,\rho}(x_m)
        \right|
        \leq \sigma.
    \)

    \item If
    \(
        \cos\gamma(x_m)>\frac{1}{\sqrt J},
    \)
    then
    \(
        \left|
            M w_m-\omega_{\mathcal L,\rho}(x_m)
        \right|
        \leq
        \sigma
        \left|
            \omega_{\mathcal L,\rho}(x_m)
        \right|.
    \)
\end{enumerate}
\end{corollary}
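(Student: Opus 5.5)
This is a direct substitution into Lemma~\ref{lemma:deterministic-weight-perturbation}, with no probability involved. The hypothesis bounds \(\|(\mbf G^{(M)})^{-1}-\mbf I\|_2\) by the finite number \(\sigma/\sqrt J\). By the convention stated before Lemma~\ref{lemma:F-chernoff}, this already means \(\mbf G^{(M)}\) is invertible. The weights \(w_m\) in \eqref{eq:weights-individual} are therefore well defined, and both deterministic estimates of Lemma~\ref{lemma:deterministic-weight-perturbation} are available.

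For statement (1), we insert the Gramian bound into \eqref{eq:deterministic-absolute-weight-bound}. This gives
\[
    \left|M w_m-\omega_{\mathcal L,\rho}(x_m)\right|
    \leq
    \frac{\sigma}{\sqrt J}\,A_{\mathcal L,\rho}(x_m)
    <
    \frac{\sigma}{\sqrt J}\,\sqrt J
    =
    \sigma,
\]
using the node condition \(A_{\mathcal L,\rho}(x_m)<\sqrt J\). Since \(\sigma>0\), this yields the stated non-strict bound.

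For statement (2), the node condition \(\cos\gamma(x_m)>1/\sqrt J\) makes \(\cos\gamma(x_m)\) strictly positive, so \eqref{eq:deterministic-relative-weight-bound} applies. However, that inequality is written as a ratio with \(|\omega_{\mathcal L,\rho}(x_m)|\) in the denominator, and this quantity could vanish. To avoid dividing, we use the product form from the proof of Lemma~\ref{lemma:deterministic-weight-perturbation}. That proof gives \(A_{\mathcal L,\rho}(x_m)=|\omega_{\mathcal L,\rho}(x_m)|/\cos\gamma(x_m)\), which is valid because \(\cos\gamma(x_m)>0\); the subcase \(k(x_m)=0\) is covered by the convention \(\cos\gamma=1\), where \(L(x_m)=0\) and both sides vanish. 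Substituting this into \eqref{eq:deterministic-absolute-weight-bound} gives
\[
    \left|M w_m-\omega_{\mathcal L,\rho}(x_m)\right|
    \leq
    \frac{\sigma}{\sqrt J}\cdot\frac{|\omega_{\mathcal L,\rho}(x_m)|}{\cos\gamma(x_m)}
    \leq
    \sigma\,|\omega_{\mathcal L,\rho}(x_m)|,
\]
since \(1/\cos\gamma(x_m)<\sqrt J\).

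The only point needing care is this degenerate case, where \(\omega_{\mathcal L,\rho}(x_m)=0\) or \(k(x_m)=0\) under the paper's \(0/0\) conventions. The product form handles it without any division by zero. Apart from that, the corollary is pure bookkeeping.
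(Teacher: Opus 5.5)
Your proposal is correct and matches the paper's argument. The paper gives no separate proof: it treats the corollary as an immediate consequence of Lemma~\ref{lemma:deterministic-weight-perturbation}, obtained by inserting $\sigma/\sqrt J$ into \eqref{eq:deterministic-absolute-weight-bound} and, for part (2), using the identity $A_{\mathcal L,\rho}(x_m)=|\omega_{\mathcal L,\rho}(x_m)|/\cos\gamma(x_m)$ that already appears in the lemma's proof. Your explicit handling of the case $k(x_m)=0$, where $\cos\gamma=1$ by convention and $\omega_{\mathcal L,\rho}(x_m)=0$, is a harmless refinement that sidesteps the $0/0$ ratio.
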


\subsection{An admissible sample-size window}
\label{subsec:sample-size-window}

Let \(\mathcal G_J\subseteq D\) be a measurable good-node set indexed by
\(J\geq1\), and write
\[
    r_J
    \coloneqq
    \rho(D\setminus\mathcal G_J).
\]
For example, we can take 
\[
   \left(\mathcal G_J^{\mathrm{rel}}
    =
    \left\{
        x\in D:
        \cos\gamma(x)>\frac1{\sqrt J}
    \right\},
    r_J=p_J\right)
    \qquad 
    \text{and}
    \qquad
    \left(\mathcal G_J^{\mathrm{abs}}
    =
    \left\{
        x\in D:
        A_{\mathcal L,\rho}(x)<\sqrt J
    \right\},
    r_J=q_J\right)
\]
for relative and absolute weight control, respectively.
Generally, for independent samples \(x_1,\ldots,x_M\sim\rho\), define
\[
    R_M(J)
    \coloneqq
    \bigcap_{m=1}^M\{x_m\in\mathcal G_J\},
\]
the event that all nodes are good-nodes. 
Then
\[
    \Prob[R_M(J)]
    =
    (1-r_J)^M.
\]
The following lemma gives a range of sample sizes for which inverse-Gramian
concentration and the good-node event are simultaneously compatible.

\begin{lemma}[Admissible sample-size window]
\label{lemma:admissible-M-window}
Let \(0<\varepsilon,\beta<1\), \(\sigma>0\), and \(J\geq1\). Suppose that
\(r_J\leq1/2\),
\begin{equation}\label{eq:window-compatibility-condition}
    \frac{QJ}{\sigma^2}
    \log\left(\frac{2N}{\varepsilon}\right)
    \geq1,
    \qquad 
    \text{and}
    \qquad
    \frac{14QJr_J}{\sigma^2}
    \log\left(\frac{2N}{\varepsilon}\right)
    <
    \log\left(\frac{1}{1-\beta}\right).
\end{equation}
Then there exists an integer \(M\) satisfying
\begin{equation}\label{eq:window-length-condition}
    \frac{6QJ}{\sigma^2}
    \log\left(\frac{2N}{\varepsilon}\right)
    \leq M
    \leq
    \frac{7QJ}{\sigma^2}
    \log\left(\frac{2N}{\varepsilon}\right)
\end{equation}
such that
\(
    \Prob[R_M(J)]>1-\beta.
\)
If, in addition,
\(
    \frac{\sigma}{\sqrt J}\leq c_0,
\)
then
\[
    \Prob\left[
        \left\|
            \left(\mbf G^{(M)}\right)^{-1}-\mbf I
        \right\|_2
        \leq
        \frac{\sigma}{\sqrt J}
    \right]
    \geq
    1-\varepsilon.
\]
\end{lemma}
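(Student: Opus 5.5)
Write $T \coloneqq \frac{QJ}{\sigma^2}\log\left(\frac{2N}{\varepsilon}\right)$, so the window \eqref{eq:window-length-condition} is $[6T,7T]$. The plan has three steps: find an integer in the window, show the good-node event is likely for it, and check the Gramian bound.

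\textbf{Existence of $M$.} By the first condition in \eqref{eq:window-compatibility-condition} we have $T\geq1$. The interval $[6T,7T]$ has length $T\geq1$, so it contains an integer. We fix such an integer $M$; it satisfies $M\leq 7T$.

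\textbf{The good-node event.} We need $(1-r_J)^M>1-\beta$, which is equivalent to $M\log\frac{1}{1-r_J}<\log\frac{1}{1-\beta}$. The key elementary step is the inequality
\[
    \log\frac{1}{1-r}\leq 2r
    \qquad\text{for } r\in[0,1/2].
\]
This holds because $\log\frac{1}{1-r}=\sum_{k\ge1}r^k/k\leq r/(1-r)\leq 2r$ on that range; it is here that the hypothesis $r_J\leq1/2$ is used. Combining this with $M\leq7T$ gives
\[
    M\log\frac{1}{1-r_J}\leq 14\,T\,r_J=\frac{14QJr_J}{\sigma^2}\log\left(\frac{2N}{\varepsilon}\right)<\log\frac{1}{1-\beta},
\]
where the last inequality is the second condition in \eqref{eq:window-compatibility-condition}. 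Hence $\Prob[R_M(J)]=(1-r_J)^M>1-\beta$.

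\textbf{Gramian part.} Apply Lemma~\ref{lemma:F-chernoff} with $\delta=\sigma/\sqrt J$. This is allowed because $\delta\leq c_0$ by the added hypothesis. The lemma requires
\[
    M\geq\frac{6Q}{\delta^2}\log\left(\frac{2N}{\varepsilon}\right)=\frac{6QJ}{\sigma^2}\log\left(\frac{2N}{\varepsilon}\right)=6T,
\]
and this holds since $M\geq 6T$. The lemma then gives $\Prob\left[\left\|\left(\mbf G^{(M)}\right)^{-1}-\mbf I\right\|_2\leq\sigma/\sqrt J\right]\geq1-\varepsilon$.

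The argument is routine throughout. The only genuinely delicate point is the logarithm bound in the good-node step, where the constant $14=2\cdot7$ must be matched correctly against the upper end of the window.
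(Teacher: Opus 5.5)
Your proof is correct and follows the paper's argument essentially line for line. You find an integer in $[6T,7T]$ from $T\ge 1$, combine $\log\frac{1}{1-r_J}\le 2r_J$ with $M\le 7T$ to get $(1-r_J)^M>1-\beta$, and apply Lemma~\ref{lemma:F-chernoff} with $\delta=\sigma/\sqrt J$ using $M\ge 6T$. Your series justification of the logarithm bound is a small detail the paper leaves implicit.
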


\begin{proof}
Set
\[
    B
    \coloneqq
    \frac{QJ}{\sigma^2}
    \log\left(\frac{2N}{\varepsilon}\right).
\]
By assumption, \(B\geq1\), so the interval \([6B,7B]\) contains an integer \(M\). 
Now, since the samples are independent,
\(
    \Prob[R_M(J)]
    =
    (1-r_J)^M.
\)
Moreover, \(r_J\leq1/2\) implies
\(
    -\log(1-r_J)\leq2r_J.
\)
Therefore,
\begin{align*}
    -\log\Prob[R_M(J)]
    &=
    -M\log(1-r_J) 
    \leq
    2Mr_J 
    \leq
    14Br_J 
    <
    \log\left(\frac{1}{1-\beta}\right).
\end{align*}
Exponentiating gives
\(
    \Prob[R_M(J)]>1-\beta.
\)
Finally, suppose that
\(
    \frac{\sigma}{\sqrt J}\leq c_0
\)
and set
\(
    \delta
    =
    \frac{\sigma}{\sqrt J}.
\)
The lower bound on \(M\) gives
\[
    M
    \geq
    \frac{6QJ}{\sigma^2}
    \log\left(\frac{2N}{\varepsilon}\right)
    =
    \frac{6Q}{\delta^2}
    \log\left(\frac{2N}{\varepsilon}\right).
\]
Hence, Lemma \ref{lemma:F-chernoff} yields
\[
    \Prob\left[
        \left\|
            \left(\mbf G^{(M)}\right)^{-1}-\mbf I
        \right\|_2
        \leq
        \frac{\sigma}{\sqrt J}
    \right]
    \geq
    1-\varepsilon.
\]
\end{proof}

For both relative and absolute error, the admissibility condition is precisely
\(
    Jr_J\longrightarrow0.
\)
Thus, for fixed \(\varepsilon,\beta,\sigma\), the compatibility condition
\eqref{eq:window-compatibility-condition} holds for all sufficiently large
\(J\). The remaining conditions
\eqref{eq:window-length-condition} and
\(\sigma/\sqrt J\leq c_0\) also hold once \(J\) is sufficiently large. This is made explicit in the following. 

\subsection{Finite-sample weight approximation}
\label{subsec:finite-sample-weight-approximation}

We now prove \Cref{thm:finite-sample-weight-approximation} which 
specializes the admissible sample-size window to the relative and
absolute admissibility conditions.

\finiteSampleWeightApprox*
\begin{proof} For
\(\star\in\{\mathrm{rel},\mathrm{abs}\}\), define
\[
    \mathcal G_J^\star
    \coloneqq
    \begin{cases}
        \left\{
            x\in D:
            \cos\gamma(x)>J^{-1/2}
        \right\},
        & \star=\mathrm{rel},\\[0.3em]
        \left\{
            x\in D:
            A_{\mathcal L,\rho}(x)<\sqrt J
        \right\},
        & \star=\mathrm{abs},
    \end{cases}
\]
and
\[
    r_J^\star
    \coloneqq
    \rho(D\setminus\mathcal G_J^\star)
    =
    \begin{cases}
        p_J,
        & \star=\mathrm{rel},\\
        q_J,
        & \star=\mathrm{abs}.
    \end{cases}
\]
Under the corresponding admissibility condition,
\[
    Jr_J^\star\longrightarrow0
    \qquad\text{as }J\to\infty.
\]
We may therefore choose
\begin{equation}\label{eq:J-treshold}
    J\geq
    \max\left\{
        1,
        \frac{\sigma^2}{c_0^2},
        \frac{\sigma^2}{Q\log(4N/\xi)}
    \right\}
\end{equation}
sufficiently large that
\[
    r_J^\star\leq\frac12
\]
and
\begin{equation}
\label{eq:weight-window-choice}
    \frac{14QJr_J^\star}{\sigma^2}\log\left(\frac{4N}{\xi}\right)
    <
    \log\left(\frac{2}{2-\xi}\right).
\end{equation}
This also ensures that
\[
    \frac{QJ}{\sigma^2}\log\left(\frac{4N}{\xi}\right)\geq1,
    \qquad\text{and}\qquad
    \frac{\sigma}{\sqrt J}\leq c_0.
\]

Applying Lemma \ref{lemma:admissible-M-window} with
\[
    \varepsilon=\beta=\frac{\xi}{2},
    \qquad
    r_J=r_J^\star,
    \qquad
    \mathcal G_J=\mathcal G_J^\star,
\]
gives an integer \(M\) satisfying
\begin{equation}
\label{eq:weight-M-window}
    \frac{6QJ}{\sigma^2}\log\left(\frac{4N}{\xi}\right)
    \leq
    M
    \leq
    \frac{7QJ}{\sigma^2}\log\left(\frac{4N}{\xi}\right).
\end{equation}
Moreover, the events
\[
    R_M^\star(J)
    \coloneqq
    \bigcap_{m=1}^M
    \{x_m\in\mathcal G_J^\star\},
\qquad 
    S_M(J)
    \coloneqq
    \left\{
        \left\|
            \left(\mbf G^{(M)}\right)^{-1}-\mbf I
        \right\|_2
        \leq
        \frac{\sigma}{\sqrt J}
    \right\},
\]
satisfy
\[
    \Prob[R_M^\star(J)]>1-\frac{\xi}{2},
    \qquad
    \Prob[S_M(J)]\geq1-\frac{\xi}{2}.
\]
Hence,
\[
    \Prob\left[
        R_M^\star(J)\cap S_M(J)
    \right]
    \geq
    \Prob[R_M^\star(J)]
    +
    \Prob[S_M(J)]
    -1
    >
    1-\xi.
\]

Fix a realization in this intersection. In the relative case,
\[
    \cos\gamma(x_m)>J^{-1/2},
    \qquad m=1,\ldots,M,
\]
so Corollary \ref{cor:deterministic-good-node-bounds} gives
\[
    \left|
        M w_m-\omega_{\mathcal L,\rho}(x_m)
    \right|
    \leq
    \sigma
    \left|
        \omega_{\mathcal L,\rho}(x_m)
    \right|.
\]
In the absolute case,
\[
    A_{\mathcal L,\rho}(x_m)<\sqrt J,
    \qquad m=1,\ldots,M,
\]
and the same corollary gives
\[
    \left|
        M w_m-\omega_{\mathcal L,\rho}(x_m)
    \right|
    \leq
    \sigma.
\]

Finally, \(S_M(J)\) implies that \(\mbf G^{(M)}\) is invertible.
Consequently, the least squares rule is well-defined and exact on \(V\).
\end{proof}

\begin{corollary}[Nonnegative intermediate rule]
\label{cor:positive-intermediate-rule}
Assume the hypotheses of \Cref{thm:finite-sample-weight-approximation}, let
\(\sigma\in(0,1)\) and \(0<\xi<1\), and suppose that
\((V,\mathcal L)\) is a nonnegative pair. Then there exist \(J\geq1\) and
an integer \(M\) satisfying
\[
    \frac{6QJ}{\sigma^2}\log\left(\frac{4N}{\xi}\right)
    \leq M \leq
    \frac{7QJ}{\sigma^2}\log\left(\frac{4N}{\xi}\right)
\]
such that, with probability at least \(1-\xi\), the least squares quadrature
rule is well-defined, exact on \(V\), and has real, nonnegative weights. If
\(L(x_m)>0\) at every sampled node, then all weights are strictly positive.
\end{corollary}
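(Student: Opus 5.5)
The plan is to run \Cref{thm:finite-sample-weight-approximation} in the relative case $\star=\mathrm{rel}$ and then read off the sign of each weight from the relative error bound. Note first that relative admissibility is what is needed here. Nonnegativity only follows from relative control, since the absolute bound $|Mw_m-\omega_{\mathcal L,\rho}(x_m)|\le\sigma$ cannot prevent a sign change at nodes where $\omega_{\mathcal L,\rho}$ is small. So I will take the hypothesis to be relative admissibility, as in \Cref{thm:finite-sample-weight-approximation-positive}. If absolute admissibility is intended, one must additionally assume relative admissibility.

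\textbf{Step 1 (real basis and real weights).} Since $(V,\mathcal L)$ is a nonnegative pair, $V$ is closed under conjugation and $\mathcal L$ is real-preserving. On the event where $\mathbf G^{(M)}$ is invertible, \Cref{lemma:real-ls-weights} gives $\mathbf w\in\mathbb R^M$. The weights themselves are basis independent, because $\mathcal Q_M(u)=\mathcal L(u_V^{(M)})$ and neither side refers to a basis. Choosing a real orthonormal basis is therefore only a convenience. By \Cref{lem:real-preserving-representer} and item (3) of Definition~\ref{def:strictly-positive-pair}, we may take $L\ge0$ pointwise. Hence $\omega_{\mathcal L,\rho}(x_m)=L(x_m)/\tau^2(x_m)\ge0$ for every node, noting that $\tau^2(x_m)>0$ almost surely.

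\textbf{Step 2 (apply the concentration theorem).} For the given $\sigma\in(0,1)$ and $\xi$, invoke \Cref{thm:finite-sample-weight-approximation} with $\star=\mathrm{rel}$. This yields $J$ and $M$ in the stated window, together with an event of probability at least $1-\xi$ on which three things hold: $\mathbf G^{(M)}$ is invertible, so the rule is well defined and exact; and $|Mw_m-\omega_{\mathcal L,\rho}(x_m)|\le\sigma\,\omega_{\mathcal L,\rho}(x_m)$ for all $m$.

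\textbf{Step 3 (sign).} Fix a realization in this event. Because $w_m$ is real and $\omega_{\mathcal L,\rho}(x_m)\ge0$, the error bound gives
\[
Mw_m\ \ge\ (1-\sigma)\,\omega_{\mathcal L,\rho}(x_m)\ \ge\ 0,
\]
so every weight is nonnegative. If $L(x_m)>0$, then $\omega_{\mathcal L,\rho}(x_m)>0$, and since $1-\sigma>0$ the same bound gives $w_m>0$.

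The only delicate point is the pointwise representative of $L$. Two representatives of $L$ can differ on a $\mu$-null set, which is also $\rho$-null, so almost surely no sampled node lies in that set. Consequently the sign conclusions hold with probability one on the good event, and I expect this to be the main, though minor, obstacle.
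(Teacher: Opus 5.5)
Your proposal is correct and follows the paper's own proof: apply the finite-sample weight theorem, get real weights from the reality lemma for least squares weights, and conclude $Mw_m \ge (1-\sigma)\,\omega_{\mathcal L,\rho}(x_m) \ge 0$, with strict positivity wherever $L(x_m)>0$. Your point that the case $\star=\mathrm{rel}$ is what is actually needed is apt, since the paper's proof uses the relative bound even though the statement formally allows either admissibility option.
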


\begin{proof}
Apply \Cref{thm:finite-sample-weight-approximation} with the specified \(\xi\). On its event
of probability at least \(1-\xi\), the rule is well-defined, exact on \(V\),
and
\[
    \left| M w_m-\omega_{\mathcal L,\rho}(x_m) \right|
    \leq
    \sigma \left| \omega_{\mathcal L,\rho}(x_m) \right|.
\]
By Lemma \ref{lemma:real-ls-weights}, the weights are real. Since the pair is
nonnegative, \(\omega_{\mathcal L,\rho}(x_m)\geq0\), and hence
\[
    M w_m
    \geq
    (1-\sigma)\omega_{\mathcal L,\rho}(x_m)
    \geq0.
\]
Strict positivity follows whenever
\(\omega_{\mathcal L,\rho}(x_m)>0\), equivalently \(L(x_m)>0\).
\end{proof}

\subsection{Condition numbers of intermediate rules}
\label{subsec:intermediate-condition-numbers}

We now combine finite-sample weight approximation with the
reference-weight concentration condition in
\eqref{eq:reference-weight-concentration}. The first observation is
deterministic.

\begin{lemma}[Condition-number reduction]
\label{lemma:condition-number-reduction}
Suppose that
\[
    \left|
        Mw_m-\omega_{\mathcal L,\rho}(x_m)
    \right|
    \leq
    \sigma
    \left|
        \omega_{\mathcal L,\rho}(x_m)
    \right|,
    \qquad
    m=1,\ldots,M.
\]
Then,
\[
    \kappa_{\mathsf W}(\mathcal Q_M)
    \leq
    \frac{1+\sigma}{M}
    \sum_{m=1}^M
    \mathsf K_{\mathcal L,\rho,\mathsf W}(x_m).
\]
\end{lemma}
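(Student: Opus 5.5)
The argument is deterministic and short. It combines the coefficient bound \eqref{eq:condition-number} for the weighted condition number with the triangle inequality applied to each weight.

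First, I will invoke the upper bound from \eqref{eq:condition-number}:
\[
    \kappa_{\mathsf W}(\mathcal Q_M)\le\sum_{m=1}^M \frac{|w_m|}{\mathsf W(x_m)}.
\]
This bound holds for any listed nodes, whether or not they are distinct. If some nodes coincide, combining their coefficients can only decrease the sum, by the triangle inequality. So there is no need to assume distinctness.

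Next, I will control each $|w_m|$ using the hypothesis. The triangle inequality gives
\[
    M|w_m|\le|\omega_{\mathcal L,\rho}(x_m)|+|Mw_m-\omega_{\mathcal L,\rho}(x_m)|\le(1+\sigma)|\omega_{\mathcal L,\rho}(x_m)|.
\]
Dividing by $M\,\mathsf W(x_m)$ is legitimate because $\mathsf W>0$. The definition \eqref{eq:weighted-reference-magnitude} then yields
\[
    \frac{|w_m|}{\mathsf W(x_m)}\le\frac{1+\sigma}{M}\,\mathsf K_{\mathcal L,\rho,\mathsf W}(x_m).
\]
Summing over $m\in[M]$ and combining with the first step gives the claim.

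The only point needing care is well-definedness at nodes with $\tau^2(x_m)=0$, where $\omega_{\mathcal L,\rho}$ was set to $0$ by convention. At such a node the hypothesis forces $w_m=0$, so the bound still holds trivially with $\mathsf K_{\mathcal L,\rho,\mathsf W}(x_m)=0$; in any case such nodes occur with probability zero under $\rho$. Because of this, I expect no real obstacle: the lemma is essentially the bookkeeping step that isolates the random quantity $\frac1M\sum_m\mathsf K_{\mathcal L,\rho,\mathsf W}(x_m)$. That quantity is then handled by sub-exponential (Bernstein-type) concentration in the proof of \Cref{thm:stable-intermediate-rule}.
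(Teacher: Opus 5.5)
Your proposal is correct and matches the paper's proof: the coefficient bound in \eqref{eq:condition-number}, the triangle-inequality estimate $M|w_m|\le(1+\sigma)|\omega_{\mathcal L,\rho}(x_m)|$ at each node, and summation over $m$. Your side remarks on coincident nodes and on nodes with $\tau^2(x_m)=0$ are accurate but not needed for the argument.
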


\begin{proof}
The coefficient bound in \eqref{eq:condition-number} and the triangle
inequality give
\[
    \kappa_{\mathsf W}(\mathcal Q_M)
    \leq
    \sum_{m=1}^M \frac{|w_m|}{\mathsf W(x_m)}
    \leq
    \frac{1+\sigma}{M}
    \sum_{m=1}^M
    \mathsf K_{\mathcal L,\rho,\mathsf W}(x_m).
\]
\end{proof}

\begin{remark}[Absolute weight control]
\label{rem:absolute-condition-number-reduction}
If instead
\[
    \left|
        Mw_m-\omega_{\mathcal L,\rho}(x_m)
    \right|
    \leq
    \sigma,
    \qquad
    m=1,\ldots,M,
\]
then
\[
    \kappa_{\mathsf W}(\mathcal Q_M)
    \leq
    \frac1M
    \sum_{m=1}^M
    \mathsf K_{\mathcal L,\rho,\mathsf W}(x_m)
    +
    \frac{\sigma}{M}
    \sum_{m=1}^M
    \frac1{\mathsf W(x_m)}.
\]
Thus, an analogous weighted stability result under absolute weight control
requires additional control of the empirical mean of
\(\mathsf W(X)^{-1}\).
\end{remark}

We next record the empirical concentration estimate used below.

\begin{lemma}[Reference-weight concentration]
\label{lemma:reference-weight-empirical-concentration}
Let \(X_1,\ldots,X_M\) be independent samples from \(\rho\), and set
\[
    Z_m
    \coloneqq
    \mathsf K_{\mathcal L,\rho,\mathsf W}(X_m).
\]
If
\(
    \left\|
        \mathsf K_{\mathcal L,\rho,\mathsf W}(X)
    \right\|_{\psi_1}
    <\infty
\)
for \(X\sim \rho\) then, for every \(t>0\),
\[
    \Prob\left[
        \left|
            \frac1M\sum_{m=1}^M Z_m
            -
            \left\|
                \frac{L}{\mathsf W}
            \right\|_{L^1_\mu(D)}
        \right|
        >
        t
    \right]
    \leq
    2\exp\left[
        -cM
        \min\left\{
            \frac{
                t^2
            }{
                \|
                    \mathsf K_{\mathcal L,\rho,\mathsf W}(X)
                \|_{\psi_1}^2
            },
            \frac{
                t
            }{
                \|
                    \mathsf K_{\mathcal L,\rho,\mathsf W}(X)
                \|_{\psi_1}
            }
        \right\}
    \right],
\]
where \(c>0\) is a universal constant. Consequently, for every
\(\eta\in(0,1)\), with probability at least \(1-\eta\),
\[
    \frac1M\sum_{m=1}^M Z_m
    \leq
    \left\|
        \frac{L}{\mathsf W}
    \right\|_{L^1_\mu(D)}
    +
    C
    \left\|
        \mathsf K_{\mathcal L,\rho,\mathsf W}(X)
    \right\|_{\psi_1}
    \left[
        \sqrt{\frac{\log(2/\eta)}{M}}
        +
        \frac{\log(2/\eta)}{M}
    \right],
\]
where \(C>0\) is universal.
\end{lemma}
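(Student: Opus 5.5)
This is a direct application of Bernstein's inequality for sums of independent sub-exponential random variables, as in \cite[Theorem 2.8.1 / Corollary 2.8.3]{vershynin2019high}. The plan has three steps: center the variables, apply Bernstein, and invert the tail bound.

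\emph{Step 1: centering.} The variables \(Z_m=\mathsf K_{\mathcal L,\rho,\mathsf W}(X_m)\) are i.i.d. and nonnegative. Since \(\|Z_m\|_{\psi_1}<\infty\), they are integrable, and by \eqref{eq:reference-weight-mean} their common mean is \(\|L/\mathsf W\|_{L^1_\mu(D)}\). We center them as \(Y_m\coloneqq Z_m-\E_\rho[Z_m]\). The standard centering lemma \cite[Exercise 2.7.10]{vershynin2019high} gives \(\|Y_m\|_{\psi_1}\le C_1\|Z_m\|_{\psi_1}\) for a universal constant \(C_1\). Write \(K\coloneqq\|\mathsf K_{\mathcal L,\rho,\mathsf W}(X)\|_{\psi_1}\).

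\emph{Step 2: Bernstein.} Bernstein's inequality for independent mean-zero sub-exponential variables gives
\[
\Prob\left[\left|\tfrac1M\textstyle\sum_{m} Y_m\right|>t\right]\le 2\exp\left[-c'M\min\left\{\tfrac{t^2}{\max_m\|Y_m\|_{\psi_1}^2},\tfrac{t}{\max_m\|Y_m\|_{\psi_1}}\right\}\right].
\]
Substituting \(\|Y_m\|_{\psi_1}\le C_1K\) and absorbing \(C_1\) into a new universal constant \(c\) gives the first display of the lemma. The only point to check is that \(K>0\), so that the ratios are well defined. If \(K=0\), then \(Z=0\) almost surely, both sides are trivial, and the bound holds under any sensible convention.

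\emph{Step 3: inversion.} We want a one-sided bound holding with probability \(1-\eta\). Set
\[
t=K\max\left\{\sqrt{\tfrac{\log(2/\eta)}{cM}},\ \tfrac{\log(2/\eta)}{cM}\right\}.
\]
With this choice, \(cM\min\{t^2/K^2,t/K\}\ge\log(2/\eta)\): if \(t/K\ge1\), then \(t/K\ge\log(2/\eta)/(cM)\); if \(t/K<1\), then \(t^2/K^2\ge\log(2/\eta)/(cM)\). Either way the tail probability is at most \(\eta\). On the complementary event,
\[
\tfrac1M\textstyle\sum_m Z_m\le\|L/\mathsf W\|_{L^1_\mu}+t.
\]
Bounding the max by the sum and setting \(C=\max\{c^{-1/2},c^{-1}\}\) yields the stated bracket \(\sqrt{\log(2/\eta)/M}+\log(2/\eta)/M\).

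No step is a real obstacle; the only care needed is tracking universal constants through the centering and the min-inversion.
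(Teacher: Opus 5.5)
Your proposal is correct and follows the paper's argument: center the $Z_m$ using the mean identity \eqref{eq:reference-weight-mean} and the $\psi_1$ centering bound, apply Bernstein's inequality for independent sub-exponential variables, and invert the resulting tail bound. Your explicit inversion, which takes $t$ as the maximum of the square-root and linear terms and checks both branches of the minimum, and your remark on the degenerate case $\|\mathsf K_{\mathcal L,\rho,\mathsf W}(X)\|_{\psi_1}=0$, fill in details that the paper leaves implicit.
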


\begin{proof}
The centered variables
\[
    Y_m
    \coloneqq
    Z_m-\E_\rho Z_m
\]
are independent and mean-zero, and satisfy
\[
    \|Y_m\|_{\psi_1}
    \leq
    C
    \left\|
        \mathsf K_{\mathcal L,\rho,\mathsf W}(X)
    \right\|_{\psi_1}.
\]
Bernstein's inequality for independent sub-exponential random variables
therefore gives the first estimate
\cite{vershynin2019high}. The second follows by inverting the tail bound and
using
\[
    \E_\rho Z_m
    =
    \E_\rho\left[
        \mathsf K_{\mathcal L,\rho,\mathsf W}(X)
    \right]
    =
    \left\|
        \frac{L}{\mathsf W}
    \right\|_{L^1_\mu(D)}
\]
from \eqref{eq:reference-weight-mean}.
\end{proof}

We restrict the following stability result to relative admissibility; under absolute weight control, an analogous result additionally requires concentration of \(\mathsf W(X)^{-1}\), as discussed in Remark~\ref{rem:absolute-condition-number-reduction}.

\stableIntermediateRule*
\begin{proof}
Let \(\mathcal G_J^{\mathrm{rel}}\) be the good-node set associated with
relative admissibility, so that
\[
    p_J
    =
    \rho(D\setminus\mathcal G_J^{\mathrm{rel}})
    \qquad\text{and}\qquad
    Jp_J\longrightarrow0.
\]
Choose
\[
    J\geq
    \max\left\{
        1,\frac{\sigma^2}{c_0^2}
    \right\}
\]
sufficiently large that \(p_J\leq\frac{\xi}{3}\) and
\begin{equation}
\label{eq:stable-window-choice}
    \frac{14QJp_J}{\sigma^2}
    \log\left(\frac{6N}{\xi}\right)
    <
    \log\left(\frac{3}{3-\xi}\right).
\end{equation}
Increasing \(J\) further if necessary, we may also ensure that
\[
    \frac{QJ}{\sigma^2}\log\left(\frac{6N}{\xi}\right)\geq1.
\]

Apply \Cref{lemma:admissible-M-window} with
\[
    \varepsilon=\beta=\frac{\xi}{3}.
\]
This gives an integer \(M\) satisfying \eqref{eq:stable-M-window} such that
\[
    \Prob[R_M^{\mathrm{rel}}(J)]>1-\frac{\xi}{3},
    \qquad
    R_M^{\mathrm{rel}}(J)
    =
    \bigcap_{m=1}^M
    \{x_m\in\mathcal G_J^{\mathrm{rel}}\},
\]
and
\[
    \Prob[S_M(J)]\geq1-\frac{\xi}{3},
    \qquad
    S_M(J)
    =
    \left\{
        \left\|
            \left(\mbf G^{(M)}\right)^{-1}-\mbf I
        \right\|_2
        \leq
        \frac{\sigma}{\sqrt J}
    \right\}.
\]

By \Cref{lemma:reference-weight-empirical-concentration} with $\eta=\xi/3$, the
event
\[
    E_{M,\mathsf W}
    =
    \left\{
        \frac1M
        \sum_{m=1}^M
        \mathsf K_{\mathcal L,\rho,\mathsf W}(x_m)
        \leq
        \left\|
            \frac{L}{\mathsf W}
        \right\|_{L^1_\mu(D)}
        +
        C
        \left\|
            \mathsf K_{\mathcal L,\rho,\mathsf W}(X)
        \right\|_{\psi_1}
        \left(
        \sqrt{\frac{\log(6/\xi)}{M}}
        +
        \frac{\log(6/\xi)}{M}
        \right)
    \right\}
\]
satisfies
\[
    \Prob[E_{M,\mathsf W}]\geq1-\frac{\xi}{3}.
\]

Consequently,
\begin{align*}
    \Prob\left[
        R_M^{\mathrm{rel}}(J)
        \cap S_M(J)
        \cap E_{M,\mathsf W}
    \right]
    &\geq
    \Prob[R_M^{\mathrm{rel}}(J)]
    +
    \Prob[S_M(J)]
    +
    \Prob[E_{M,\mathsf W}]
    -2 
    >
    1-\xi.
\end{align*}

Fix a realization in this intersection. The events
\(R_M^{\mathrm{rel}}(J)\) and \(S_M(J)\), together with
\Cref{cor:deterministic-good-node-bounds}, give
\[
    \left|
        Mw_m-\omega_{\mathcal L,\rho}(x_m)
    \right|
    \leq
    \sigma
    |\omega_{\mathcal L,\rho}(x_m)|,
    \qquad
    m=1,\ldots,M.
\]
Moreover, \(S_M(J)\) implies that the least squares rule is well-defined and
exact on \(V\). Applying Lemma~\ref{lemma:condition-number-reduction} and the
definition of \(E_{M,\mathsf W}\) gives
\[
    \kappa_{\mathsf W}(\mathcal Q_M)
    \leq
    (1+\sigma)
    \left[
        \left\|
            \frac{L}{\mathsf W}
        \right\|_{L^1_\mu(D)}
        +
        C
        \left\|
            \mathsf K_{\mathcal L,\rho,\mathsf W}(X)
        \right\|_{\psi_1}
        \frac{\log(6/\xi)}{\sqrt{M}}
    \right],
\]
which proves the result, where $C$ is a different universal constant than previously, and we have used $\log(6/\xi) > \sqrt{\log(6/\xi)}$ and $M \geq \sqrt{M}$.
\end{proof}

\begin{corollary}[Approximation consequence]
\label{cor:stable-intermediate-approximation}

Assume the hypotheses of
\Cref{thm:stable-intermediate-rule}, and suppose additionally that 
\[
    V \subset \mathcal{B}^\infty_{\mathsf W}(D), \qquad
    \mathcal L_{\mathsf W}\in (\mathcal{B}^\infty_{\mathsf W}(D))^*,
    \qquad
    \mathcal L_{\mathsf W}|_V=\mathcal L.
\]Then there exist \(J\geq1\) and an integer
\(M\) satisfying \eqref{eq:stable-M-window} such that, with
probability $1-\xi$, the least squares quadrature rule is exact on \(V\) and, for
every \(u\in \mathcal{B}^\infty_{\mathsf W}(D)\),
\begin{align*}
    &|\mathcal Q_M(u)-\mathcal L_{\mathsf W}(u)|
    \leq 
    \Bigg[
        (1+\sigma)
        \left(
            \left\|
                \frac{L}{\mathsf W}
            \right\|_{L^1_\mu(D)}
           + 
            C
            \left\|
                \mathsf K_{\mathcal L,\rho,\mathsf W}(X)
            \right\|_{\psi_1}
            \frac{\log(6/\xi)}{\sqrt{M}}
        \right)
        +
        \|\mathcal L_{\mathsf W}\|_{\infty,\mathsf W}^*
    \Bigg]
    e_{V,\infty,\mathsf W}(u),
\end{align*}
where \(X\sim\rho\).
\end{corollary}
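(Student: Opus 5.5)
The plan is to combine \Cref{thm:stable-intermediate-rule} with the weighted Lebesgue inequality \eqref{eq:quad-lebesgue}. Both ingredients are already available. The corollary amounts to substituting the probabilistic condition-number bound into the deterministic error estimate on the good event.

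First, invoke \Cref{thm:stable-intermediate-rule} with the given $\sigma$, $\xi$, and approximation weight $\mathsf W$. This produces $J\geq1$ and an integer $M$ in the window \eqref{eq:stable-M-window}, together with an event $\mathcal E$ of probability at least $1-\xi$. On $\mathcal E$ the empirical Gramian $\mathbf G^{(M)}$ is invertible, so the least squares rule $\mathcal Q_M$ is well defined and exact on $V$. Also on $\mathcal E$,
\[
\kappa_{\mathsf W}(\mathcal Q_M)\leq(1+\sigma)\left[\left\|\frac{L}{\mathsf W}\right\|_{L^1_\mu(D)}+C\|\mathsf K_{\mathcal L,\rho,\mathsf W}(X)\|_{\psi_1}\frac{\log(6/\xi)}{\sqrt M}\right].
\]
No new probabilistic estimate is needed. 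The same $J$, $M$ and event are used.

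Second, fix a realization in $\mathcal E$ and an arbitrary $u\in\mathcal B^\infty_{\mathsf W}(D)$. Exactness on $V$, together with the extra hypotheses $V\subset\mathcal B^\infty_{\mathsf W}(D)$ and $\mathcal L_{\mathsf W}|_V=\mathcal L$, is exactly what \eqref{eq:quad-lebesgue} requires (proved in \Cref{app:lebesgue}). Concretely, for any $v\in V$,
\[
\mathcal Q_M(u)-\mathcal L_{\mathsf W}(u)=\mathcal Q_M(u-v)-\mathcal L_{\mathsf W}(u-v).
\]
The first term is bounded by $\kappa_{\mathsf W}(\mathcal Q_M)\|u-v\|_{\infty,\mathsf W}$. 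The second is bounded by $\|\mathcal L_{\mathsf W}\|^*_{\infty,\mathsf W}\|u-v\|_{\infty,\mathsf W}$. Taking the infimum over $v\in V$ gives $e_{V,\infty,\mathsf W}(u)$.

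Third, insert the condition-number bound into this inequality. The bound holds simultaneously for all $u$ because the event $\mathcal E$ does not depend on $u$: the weights depend only on the nodes.

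There is no substantive obstacle. The only point requiring care is that $\kappa_{\mathsf W}$ is the operator norm on the pointwise-defined space $\mathcal B^\infty_{\mathsf W}(D)$, so that $|\mathcal Q_M(u-v)|\leq\kappa_{\mathsf W}(\mathcal Q_M)\|u-v\|_{\infty,\mathsf W}$ holds by definition \eqref{eq:condition-number}. Also, the statement's ``probability $1-\xi$'' should be read as ``at least $1-\xi$''.
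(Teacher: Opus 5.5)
Your proposal is correct and follows the paper's own proof, which simply applies the weighted Lebesgue inequality (Lemma~\ref{lemma:lebesgue}) to the rule supplied by \Cref{thm:stable-intermediate-rule} on its event of probability at least $1-\xi$. Your added observations, that the event does not depend on $u$ and that ``probability $1-\xi$'' should be read as ``at least $1-\xi$'', are accurate refinements of that same argument.
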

\begin{proof}
Apply the weighted Lebesgue inequality of
Lemma \ref{lemma:lebesgue} to the rule supplied by
\Cref{thm:stable-intermediate-rule}.
\end{proof}

The preceding corollary is the principal quadrature error consequence of the
intermediate-rule construction: weighted stability converts approximation in
\(\mathcal{B}^\infty_{\mathsf W}(D)\) into a corresponding bound for
\(|\mathcal Q_M(u)-\mathcal L_{\mathsf W}(u)|\). In particular, any estimate for
\(e_{V,\infty,\mathsf W}(u)\), such as a weighted Jackson inequality, yields an
explicit quadrature error estimate.

The intermediate rule generally contains more than \(N=\dim(V)\) active nodes.
In the next section, we specialize to settings in which its weights are nonnegative and prune the rule while preserving exactness and nonnegativity.
When \(1\in V\), the resulting positive rule has the sharp unweighted condition number \( \kappa(\mathcal Q)=\mathcal L(1).\)

\section{Pruning exact quadrature rules}
\label{subsec:reduced}

Sections~\ref{sec:ls}--\ref{sec:posls} provide the input to the
pruning argument: on the event of probability at least \(1-\xi\) supplied by
Corollary \ref{cor:positive-intermediate-rule}, the least squares construction
produces an exact \(M\)-point rule with nonnegative weights. The present
section gives the deterministic second step. We show that any
nonnegative exact \(M\)-point rule can be compressed to a nested exact rule with
at most \(N=\dim(V)\) active nodes, without destroying positivity.  The
argument is a constructive Carath\'eodory--Steinitz reduction based only on
linear dependence in the moment equations.

\subsection{Positive pruning}

Let \(\{\varphi_n\}_{n=1}^N\) be a basis for \(V\), let
\(x_1,\ldots,x_M\in D\) be quadrature nodes, and let \(w_1,\ldots,w_M\) be quadrature weights. Define the Vandermonde matrix
\[
    \mathbf V_{m,n}=\varphi_n(x_m),
    \qquad (m,n)\in [M]\times [N],
\]
and the moment $N$-vector $\bs{\ell}$ as in \eqref{eq:moment-vector}.
Then, exactness on \(V\), i.e., $\mathcal L(v) = \mathcal{Q}_M(v)$ for every $v \in V$,
is equivalent to the moment system
\[
    \trans{\mathbf V}\mathbf w=\bs\ell,
    \qquad
    \mathbf w=\trans{(w_1,\ldots,w_M)}.
\]
Pruning is therefore the problem of reducing the support of a solution of
this linear system.  
We call the resulting quadrature rule with smaller support nested because
its nodes are a subset of the original $M$ nodes.
The positive case is the one needed for the constructive
Tchakaloff argument.

\begin{restatable}[Positive Carath\'eodory--Steinitz pruning]{theorem}{positiveCaratheodorySteinitzPruning}
\label{thm:caratheodory}
Assume that \(V\) is an \(N\)-dimensional real vector space of real-valued
functions.  Let \(\{x_m,w_m\}_{m=1}^M\), with \(w_m\ge0\), be a
nonnegative quadrature rule satisfying
\begin{equation}
\label{eq:L-exactness-assumption}
    \mathcal L(v)=\sum_{m=1}^M w_m v(x_m),
    \qquad v\in V.
\end{equation}
If \(M>N\), then there exists a nested quadrature rule of size $\widetilde M \leq N$, i.e. a subset
\(\{i_n\}_{n=1}^{\widetilde M}\subset [M]\), with
\(\widetilde M\le N\), and nonnegative weights
\(\{\widehat w_n\}_{n=1}^{\widetilde M}\subset [0,\infty)\) such that
\begin{equation}
\label{eq:L-exact-compressed}
    \mathcal L(v)=\sum_{n=1}^{\widetilde M}\widehat w_n v(x_{i_n}),
    \qquad v\in V.
\end{equation}
\end{restatable}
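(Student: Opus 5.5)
The plan is the classical Carathéodory--Steinitz reduction, carried out as an explicit iteration on the support of the weight vector. First discard all nodes with $w_m=0$; this changes neither exactness nor nonnegativity. Merge coincident nodes by summing their weights, which keeps them nonnegative. Let $S\subset[M]$ be the active index set, so $w_m>0$ for $m\in S$ and, by \eqref{eq:L-exactness-assumption}, $\trans{\mathbf V_S}\mathbf w_S=\bs\ell$. Here $\mathbf V_S$ is the $|S|\times N$ real evaluation matrix formed from a real basis $\{\varphi_n\}$ of $V$. If $|S|\le N$ we are done, taking $\{i_n\}=S$ and $\widehat w_n=w_{i_n}$.

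If $|S|>N$, the $|S|$ columns of $\trans{\mathbf V_S}\in\mathbb R^{N\times|S|}$, namely the vectors $\bs\varphi(x_m)\in\mathbb R^N$, are linearly dependent. Hence there is a nonzero $\mathbf z\in\mathbb R^{|S|}$ with $\trans{\mathbf V_S}\mathbf z=\mathbf 0$. Replacing $\mathbf z$ by $-\mathbf z$ if necessary, we may assume $\mathbf z$ has at least one strictly positive entry. Set
\[
    t^\star=\min\left\{\frac{w_m}{z_m}: m\in S,\ z_m>0\right\}>0,
    \qquad
    \mathbf w'=\mathbf w_S-t^\star\mathbf z .
\]
Then $\trans{\mathbf V_S}\mathbf w'=\bs\ell$, so exactness on $V$ is preserved. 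Entries with $z_m\le0$ do not decrease, and entries with $z_m>0$ satisfy $w_m-t^\star z_m\ge0$ by minimality of $t^\star$. At any minimizing index the new weight is exactly zero. So $\mathbf w'\ge0$ and its support is strictly smaller than $S$, and the nodes remain a subset of the original ones.

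Iterating this step strictly decreases the number of active nodes at each stage. After at most $M-N$ steps the support has size $\widetilde M\le N$, which yields \eqref{eq:L-exact-compressed}. Since each step uses only real linear algebra (a null vector and a ratio test), the procedure is algorithmic, consistent with the constructive framing.

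I do not expect a serious obstacle; the argument is elementary. The points that need care are the following.
\begin{itemize}
    \item Ensuring $\mathbf z$ can be chosen with a positive entry; this holds since $\mathbf z\neq0$ allows a sign flip.
    \item Ensuring the linear dependence is taken in $\mathbb R^N$. This is why $V$ is assumed to be a real space of real-valued functions, so that $\mathbf z$ is real and the ratio test is meaningful.
    \item Handling repeated nodes. Coincident nodes give identical columns, which are still handled correctly by the dependence argument, so merging them is optional.
\end{itemize}
The resulting $\widehat w_n$ are nonnegative. Strict positivity follows by discarding zero weights, which can only decrease $\widetilde M$.
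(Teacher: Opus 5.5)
Your proposal is correct and follows essentially the same route as the paper's proof. Both discard zero weights, take a nonzero real null vector of the transposed evaluation matrix, and use a ratio test to zero out one weight while keeping the rest nonnegative, iterating until at most $N$ active nodes remain. The only cosmetic difference is that you fix the sign of the null vector and minimize $w_m/z_m$ over its positive entries, whereas the paper minimizes $|w_m/k_m|$ over all nonzero entries and treats the two possible signs of $k_{m_*}$ symmetrically.
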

The proof is provided in \Cref{app:caratheodory} and reveals that the procedure to achieve \eqref{eq:L-exact-compressed} is computationally  constructive through identification of null vectors of $\trans{\mathbf V}$ and its submatrices. 
This type of node reduction is closely related to earlier work on positive cubature, moment compression, and efficient pruning procedures
\cite{Bos2016b,Bos2018b,Wilson1969,Cools1989,Rabinowitz1986,belik_efficient_2025}.
Different choices of null vector or
minimizing index in \eqref{eq:mstar_alphastar} may lead to different nested
rules, but all such choices preserve exactness and nonnegativity.

\begin{remark}[Stability of positive pruned rules]
\label{rem:positive-pruned-stability}
If \(1\in V\), then positive pruning preserves the sharp stability bound
  associated with exactness on constants.  In particular, with the integral functional $\mathcal L(v) = \int v d\mu$,
\[
    \mathcal Q_{\widetilde M}(v)
    =
    \sum_{n=1}^{\widetilde M}\widehat w_n v(x_{i_n}), \hskip 10pt
    \Longrightarrow \hskip 10pt
    \kappa(\mathcal Q_{\widetilde M})
    =
    \sum_{n=1}^{\widetilde M}\widehat w_n
    =
    \mathcal Q_{\widetilde M}(1)
    =
    \mathcal L(1).
\]
If further \(\mu\) is a probability
measure, then \(\kappa(\mathcal Q_{\widetilde M})=1\).
\end{remark}

With \Cref{thm:caratheodory}, the proofs for our main results in \Cref{ssec:preliminaries-main} are simple corollaries of our previously established results. We omit providing detailed proofs, here sketching only outlines:
\begin{itemize}
  \item Proof of \Cref{thm:finite-sample-weight-approximation-positive}: For every
  \(\sigma\in(0,1)\) and \(0<\xi<1\), Corollary
  \ref{cor:positive-intermediate-rule} gives an \(M\)-point nonnegative
  intermediate quadrature rule with probability at least \(1-\xi\).
  \item Proof of \Cref{thm:l2-tchakaloff}: With the non-negative quadrature rule from \Cref{thm:finite-sample-weight-approximation-positive}, apply \Cref{thm:caratheodory} to the real space \(V_{\mathbb R}\) of real dimension \(N\). Exactness extends to \(V = V_{\mathbb R} \oplus i V_{\mathbb R}\) by linearity.
  \item Proof of \Cref{thm:l2-tchakaloff-conjugate}: Apply
  \Cref{thm:l2-tchakaloff} to
  \((\mu,\rho,\widetilde V,\widetilde{\mathcal L})\). This yields a positive
  quadrature rule with at most
  \(P=\dim_{\mathbb C}\widetilde V\leq2N\) nodes that is exact on
  \(\widetilde V\), hence also on \(V\).
\end{itemize}

\subsection{Variants and interpretations}
We now record several ancillary but closely related variants and consequences of \Cref{thm:caratheodory} for quadrature.
\begin{corollary}[Algebraic pruning of exact rules]
\label{lem:algebraic-pruning}
Let \(\mathbb F \in \{\mathbb {R}, \mathbb{C}\}\).  Let \(V\) be an
\(N\)-dimensional vector space of \(\mathbb F\)-valued functions, and assume
that an \(M\)-point quadrature rule with weights \(w_m\in\mathbb F\)
satisfies
\[
    \mathcal L(v)=\sum_{m=1}^M w_m v(x_m),
    \qquad v\in V.
\]
If \(M>N\), then there exists a nested quadrature rule with at most \(N\)
nonzero weights that is exact on \(V\).
\end{corollary}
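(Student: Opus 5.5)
We prove \Cref{lem:algebraic-pruning} by iterating a null-vector elimination on the moment system, with no sign constraint to preserve. As in the positive case, exactness on \(V\) is equivalent to \(\trans{\mathbf V}\mathbf w=\bs\ell\), where \(\mathbf V\in\mathbb F^{M\times N}\) is the Vandermonde matrix \(\mathbf V_{m,n}=\varphi_n(x_m)\) for a basis \(\{\varphi_n\}_{n=1}^N\) of \(V\) over \(\mathbb F\). Let \(S=\{m:w_m\neq0\}\) denote the active index set. If \(|S|\le N\) we are done, with the nested rule being the original one restricted to \(S\).

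Suppose instead \(|S|>N\). Then the submatrix \(\trans{\mathbf V}_{:,S}\in\mathbb F^{N\times|S|}\) has more columns than rows, so it has a nontrivial null vector \(\mathbf z\in\mathbb F^{|S|}\); extend it by zero to \(\mathbb F^M\). Choose any \(m^\star\in S\) with \(z_{m^\star}\neq0\) and set \(\alpha=w_{m^\star}/z_{m^\star}\in\mathbb F\). Define \(\mathbf w'=\mathbf w-\alpha\mathbf z\). Then \(\trans{\mathbf V}\mathbf w'=\trans{\mathbf V}\mathbf w=\bs\ell\), so exactness is preserved. Moreover \(w'_{m^\star}=0\), and \(w'_m=0\) for every \(m\notin S\), because \(\mathbf z\) is supported in \(S\). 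Hence the active set strictly shrinks, and all the weights remain in \(\mathbb F\) because \(\alpha\) and \(\mathbf z\) lie in \(\mathbb F\).

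Iterating this step strictly decreases the number of active nodes, so after at most \(M-N\) steps we obtain a rule with at most \(N\) nonzero weights. Its nodes form a subset of \(\{x_m\}\), so the rule is nested, and it is exact on \(V\).

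There is essentially no obstacle here. The only care needed is the field: when \(\mathbb F=\mathbb R\), the null vector must be taken real. This is automatic, since \(\trans{\mathbf V}\) is then a real matrix and its null space over \(\mathbb R\) is nontrivial by rank–nullity. Unlike \Cref{thm:caratheodory}, we do not need the minimizing-ratio choice of \(\alpha\), because signs need not be preserved. Alternatively, one could simply pick \(N\) linearly independent rows spanning the row space of \(\mathbf V_{S,:}\) and solve the reduced system; the resulting consistent system has a solution supported on those rows. The iterative argument above, however, mirrors the constructive proof of \Cref{thm:caratheodory}.
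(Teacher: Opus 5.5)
Your proof is correct and is essentially the paper's argument. You take a null vector of $\trans{\mathbf V}$ restricted to the active columns, zero out one weight via $\alpha = w_{m^\star}/z_{m^\star}$ while preserving $\trans{\mathbf V}\mathbf w=\bs\ell$, and iterate until at most $N$ active nodes remain; supporting the null vector on the active set $S$ plays exactly the role of the paper's step of discarding zero-weight nodes before each elimination.
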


\begin{proof}
If any weight is zero, the corresponding node can be removed immediately.
We therefore assume that all active weights are nonzero.
Use the same moment system,
\[
    \trans{\mathbf V}\mathbf w=\bs\ell.
\]
Since \(M>N\), choose a nonzero
\(\mathbf k\in\ker(\trans{\mathbf V})\).  For any $m_\ast \in [M]$ such that \(k_{m_*}\neq0\), then setting
\[
    \alpha_*=\frac{w_{m_*}}{k_{m_*}}
\]
makes the \(m_*\)-th entry of \(\mathbf w-\alpha_*\mathbf k\) equal to zero,
while preserving the moment equations.  Repeating this procedure gives an exact
nested rule with at most \(N\) active nodes.
\end{proof}

This algebraic pruning step does not preserve positivity or stability. (Though with $\mathbb{F} = \mathbb{R}$, positivity could be preserved.)  In general the only role of this step is to show that interpolatory-size exactness is available whenever
arbitrary signed or complex weights are permitted.

\begin{corollary}
\label{cor:caratheodory_complex}
Assume that \(V\) is an \(N\)-dimensional complex-valued space and that an
\(M\)-point rule with real nonnegative weights is exact on \(V\).  Then there
exists a nested real nonnegative rule with at most \(2N\) active nodes that
is exact on \(V\).
\end{corollary}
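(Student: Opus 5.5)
The plan is to reduce to the real positive pruning result, \Cref{thm:caratheodory}, by splitting each complex function into its real and imaginary parts. Let \(x_1,\ldots,x_M\) be the nodes and \(w_m\ge0\) the weights. The quantities \(\mathcal L(v)\) are only defined on \(V\), so I will not prune against \(\mathcal L\) itself. Instead, define the real vector space
\[
    U
    \coloneqq
    \operatorname{span}_{\mathbb R}\{\operatorname{Re} v,\ \operatorname{Im} v : v\in V\},
\]
whose elements are real-valued functions on \(D\). Since \(V\) has real dimension \(2N\) and the map \(v\mapsto(\operatorname{Re}v,\operatorname{Im}v)\) is \(\mathbb R\)-linear, \(U\) is spanned by the real and imaginary parts of \(\varphi_1,\ldots,\varphi_N\). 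Hence \(\dim_{\mathbb R}U\le2N\).

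Next I define a functional \(\mathcal L_U\) on \(U\) by \(\mathcal L_U(u)\coloneqq\sum_{m=1}^M w_m u(x_m)\). This is a well-defined real linear functional on \(U\) and is trivially exact for the given rule. The key point is that a nested nonnegative rule with weights \(\widehat w_n\ge0\) that is exact for \(\mathcal L_U\) on \(U\) is automatically exact for \(\mathcal L\) on \(V\). Indeed, for \(v\in V\), realness of the weights gives \(\sum_n\widehat w_n v(x_{i_n})=\mathcal L_U(\operatorname{Re}v)+i\,\mathcal L_U(\operatorname{Im}v)\). By the same computation applied to the original rule, this equals \(\sum_m w_m v(x_m)=\mathcal L(v)\).

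It then remains to apply \Cref{thm:caratheodory} with \(U\) in place of \(V\), \(\mathcal L_U\) in place of \(\mathcal L\), and \(N'=\dim_{\mathbb R}U\le2N\). If \(M\le N'\) there is nothing to do. Otherwise the theorem yields a subset of at most \(N'\le2N\) nodes carrying nonnegative weights that is exact on \(U\), and hence on \(V\) by the previous paragraph. Zero weights may be discarded, so at most \(2N\) nodes are active.

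The only delicate point is that \Cref{thm:caratheodory} is stated with an abstract functional \(\mathcal L\) on a real function space. I therefore need to confirm that its proof uses only the moment system \(\trans{\mathbf V}\mathbf w=\bs\ell\) for a basis of \(U\), with \(\bs\ell\) given by \(\mathcal L_U\) on that basis, which holds by construction. If \(\dim_{\mathbb R}U<2N\), for instance when \(V\) contains real functions, the bound simply improves.
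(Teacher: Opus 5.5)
Your proposal is correct and follows essentially the same route as the paper: you pass to the real span of \(\operatorname{Re}\varphi_n,\operatorname{Im}\varphi_n\), which has dimension at most \(2N\), use realness of the weights to make exactness on \(V\) equivalent to matching the real moments on this span, and then apply \Cref{thm:caratheodory}. Defining \(\mathcal L_U\) through the original rule just makes explicit the consistency of the real moment system, which the paper uses implicitly when it stacks \(\operatorname{Re}(\trans{\mathbf V})\) and \(\operatorname{Im}(\trans{\mathbf V})\).
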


\begin{proof}
Again, we begin by removing any nodes with zero weight.
Let \(\{\varphi_n\}_{n=1}^N\) be a basis for \(V\), let
\(\mathbf V\in\mathbb C^{M\times N}\) be the associated Vandermonde matrix,
let \(\mathbf w\in\mathbb R^M\) be the strictly positive weight vector, and let
\(\bs\ell\in\mathbb C^N\) be the moment vector.  Exactness is equivalent to
\[
    \trans{\mathbf V}\mathbf w=\bs\ell.
\]
Since \(\mathbf w\) is required to remain real, this complex system is
equivalent to the real system
\[
    \begin{bmatrix}
        \operatorname{Re}(\trans{\mathbf V}) \\
        \operatorname{Im}(\trans{\mathbf V})
    \end{bmatrix}
    \mathbf w
    =
    \begin{bmatrix}
        \operatorname{Re}(\bs\ell) \\
        \operatorname{Im}(\bs\ell)
    \end{bmatrix}.
\]
Equivalently, the real weights must match the moments of the real-valued
space
\[
    V_{\mathbb R}
    =
    \operatorname{span}_{\mathbb R}
    \{\operatorname{Re}\varphi_n,\operatorname{Im}\varphi_n:
      n=1,\ldots,N\}.
\]
This space has dimension at most \(2N\).  Applying
Theorem~\ref{thm:caratheodory} to \(V_{\mathbb R}\) gives a nested
nonnegative rule with at most \(2N\) active nodes, and matching the moments
of \(V_{\mathbb R}\) is equivalent to exactness on \(V\).
\end{proof}

\begin{remark}[Conjugate symmetry]
If the basis of \(V\) contains conjugate pairs, the real system in
Corollary~\ref{cor:caratheodory_complex} may contain redundant equations.
For example, suppose \(V\) has a basis of the form
\[
    \{\varphi_n\}_{n=1}^J
    \cup
    \{\overline{\varphi_n}\}_{n=1}^J
    \cup
    \{\psi_n\}_{n=1}^{N-2J},
    \qquad 2J\le N.
\]
Then the real and imaginary parts are spanned by
\[
    \{\operatorname{Re}(\varphi_n)\}_{n=1}^J
    \cup
    \{\operatorname{Im}(\varphi_n)\}_{n=1}^J
    \cup
    \{\operatorname{Re}(\psi_n)\}_{n=1}^{N-2J}
    \cup
    \{\operatorname{Im}(\psi_n)\}_{n=1}^{N-2J}.
\]
This set has cardinality at most
\[
    2J+2(N-2J)=2N-2J.
\]
Thus, in this case, the same argument gives a nested real nonnegative rule
with at most \(2N-2J\) active nodes. The number
\(P=\dim_{\mathbb C}\widetilde V\leq2N-2J\), with
\(\widetilde V=\mathrm{span}\{V,\overline V\}\).
\end{remark}

\section{Numerical example}
\label{sec:numerics}

We consider the standard Gaussian measure,
\[D = (-\infty,\infty),\quad d\mu(x) = \frac{1}{\sqrt{2\pi}}\exp\left(\frac{-x^2}{2}\right)dx.\]
Let $p_n(x)$ denote the degree-$n$ orthonormal Hermite polynomial. From this, we construct the following non-orthonormal basis:
\[
    \psi_1(x) = 1,\quad \psi_2(x) = \left|x+\frac{1}{2}\right|^a \left|x-\frac{1}{2}\right|^a,\quad \psi_3(x) = |x-1|^{-b} |x|^{-b}  |x+1|^{-b} , \quad \psi_{n}(x) = p_{n-3}(x),\quad n > 3,
\]
with $a,b>0$. We consider the following functional
\[\mathcal{L}(f) = \int_{-\infty}^\infty \psi_2(x) f(x) d\mu(x),\quad L(x) = \psi_2(x).\]
We note that the basis has singular behavior of order $b$ at $x=-1,0,1$ and the Riesz representer has isolated zeros of order $a$ at $x=-0.5,0.5$. To satisfy relative admissibility for induced sampling and mixture sampling \eqref{eq:mixed-sampling-density}, we choose $a = 0.49 < \frac{1}{2}$ and $b = 0.24<\frac{1}{4}$.

Using inner products evaluated with the adaptive integration package QuadGK.jl, we numerically orthonormalize the basis to obtain the trial basis \(\{\varphi_n\}_{n=1}^N\). The first five basis elements, along with the Riesz representer, are illustrated in the left panel of \Cref{fig:modified_hermite_basis_pdfs}.

To control the trial functions in the weighted supremum norm and ensure that \(\mathsf W^{-1}\in L_\mu^2(D)\), we choose
\[\mathsf{W}^{-2}(x) = \frac{k(x)}{N} = \frac{1}{N}\sum_{n=1}^N \varphi_n(x)^2.\]
Fixing $N=5$, we then visualize the density functions of the standard Gaussian, the induced distribution, and mixture distributions with $\theta=0.5$ and $\theta=0$ with respect to the Lebesgue measure in the right panel of \Cref{fig:modified_hermite_basis_pdfs}. We note that the case $\theta=0$ is not admissible as it results in $Q=\infty$.

\begin{figure}[htbp]
    \centering
    \includegraphics[width=0.48\textwidth]{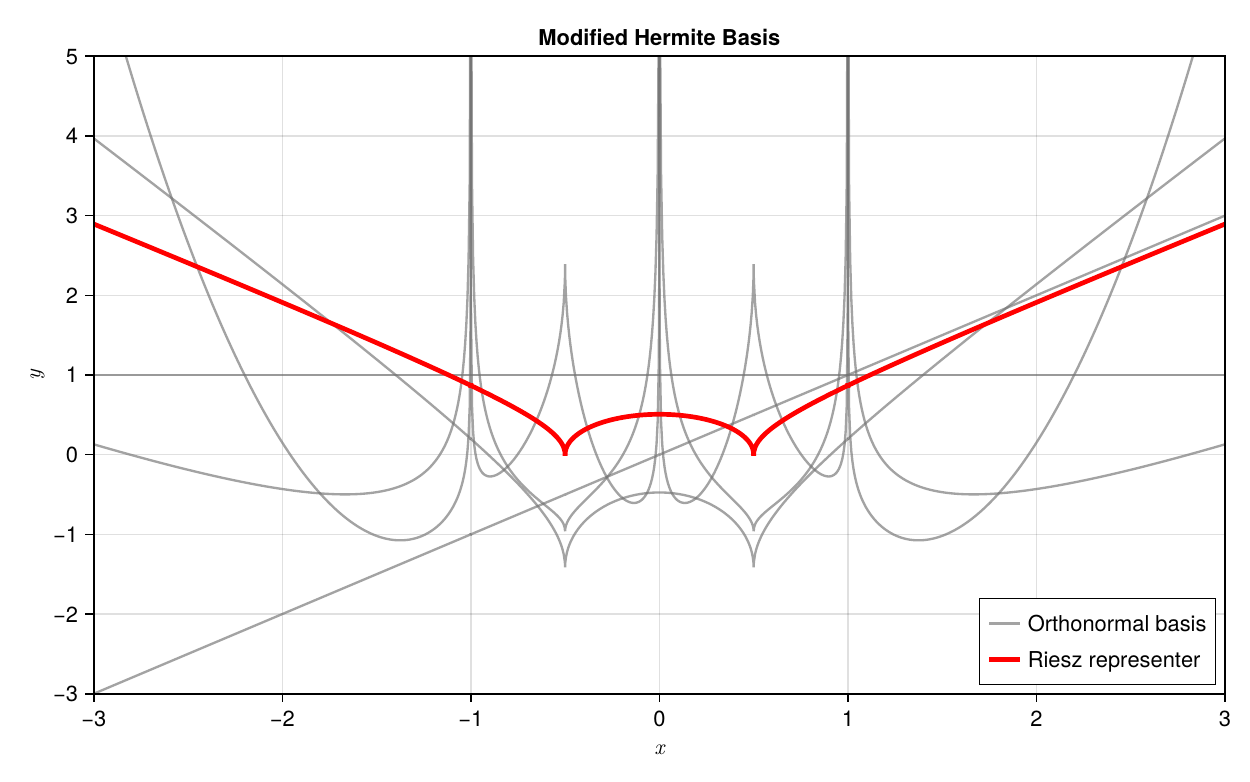}
    \includegraphics[width=0.48\textwidth]{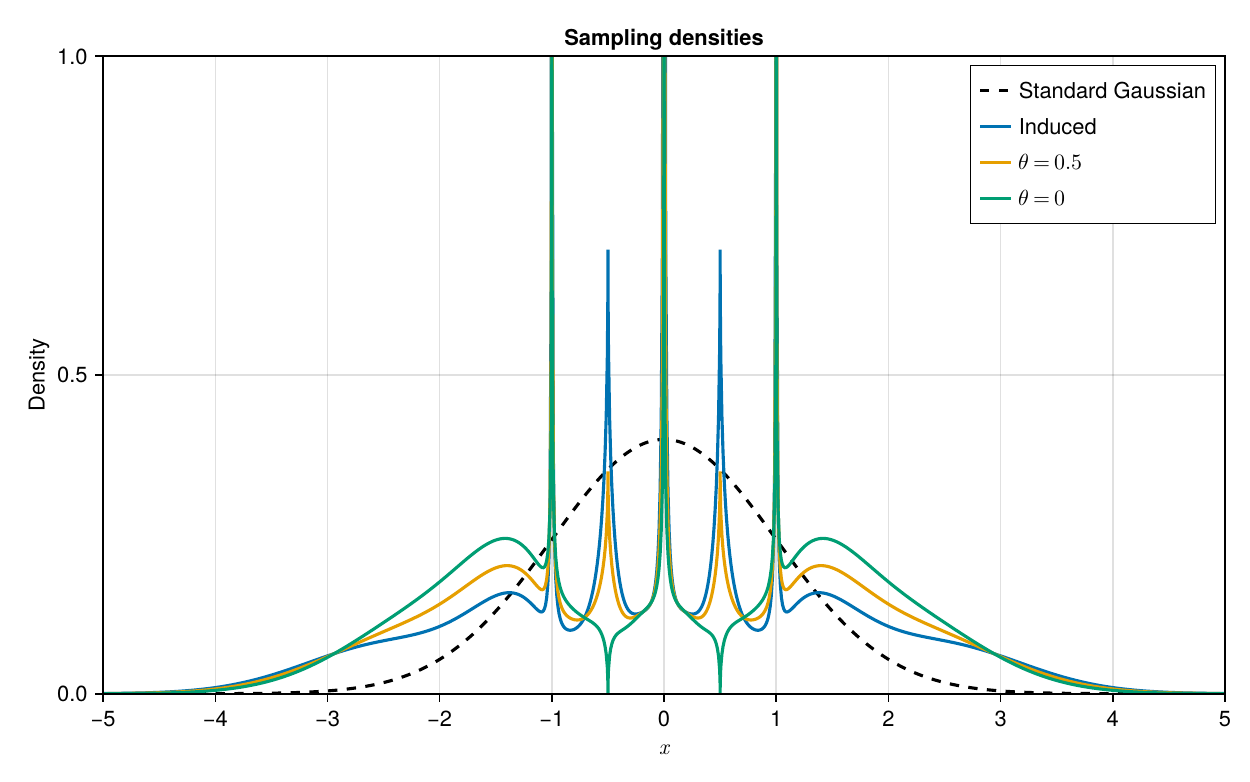}
    \caption{Left, orthonormal basis $\{\varphi_n\}_{n=1}^5$ along with the Riesz representer $\psi_2$. Right, the density functions of the standard Gaussian, the induced distribution, and the mixture distributions for $\theta=0.5$ and $\theta=0$, all with $N=5$.}
    \label{fig:modified_hermite_basis_pdfs}
\end{figure}

Because the basis is unbounded ($K=\infty$), standard sampling performs very poorly for this problem. Induced sampling provides us the optimal stability parameter, $Q=N$, however, still struggles due to the collapse of the limiting weights, $\omega_{\mathcal{L},\rho}(x)$ for $x$ near the singularities and of large magnitude. 

For simulations, we form the grid
\[
    N\in\{4,8,\dots,60\},
    \qquad
    M=\left\lceil \alpha N\log N\right\rceil,
    \qquad
    \alpha\in\{1.0,1.375,\dots,10.0\},
\]
with \(1000\) independent trials for each configuration. Samples from the induced and mixture distributions are generated by inverse-transform sampling using linear interpolation of a CDF evaluated on a fine grid.

\begin{figure}[htbp]
    \centering
    \includegraphics[width=0.32\textwidth]{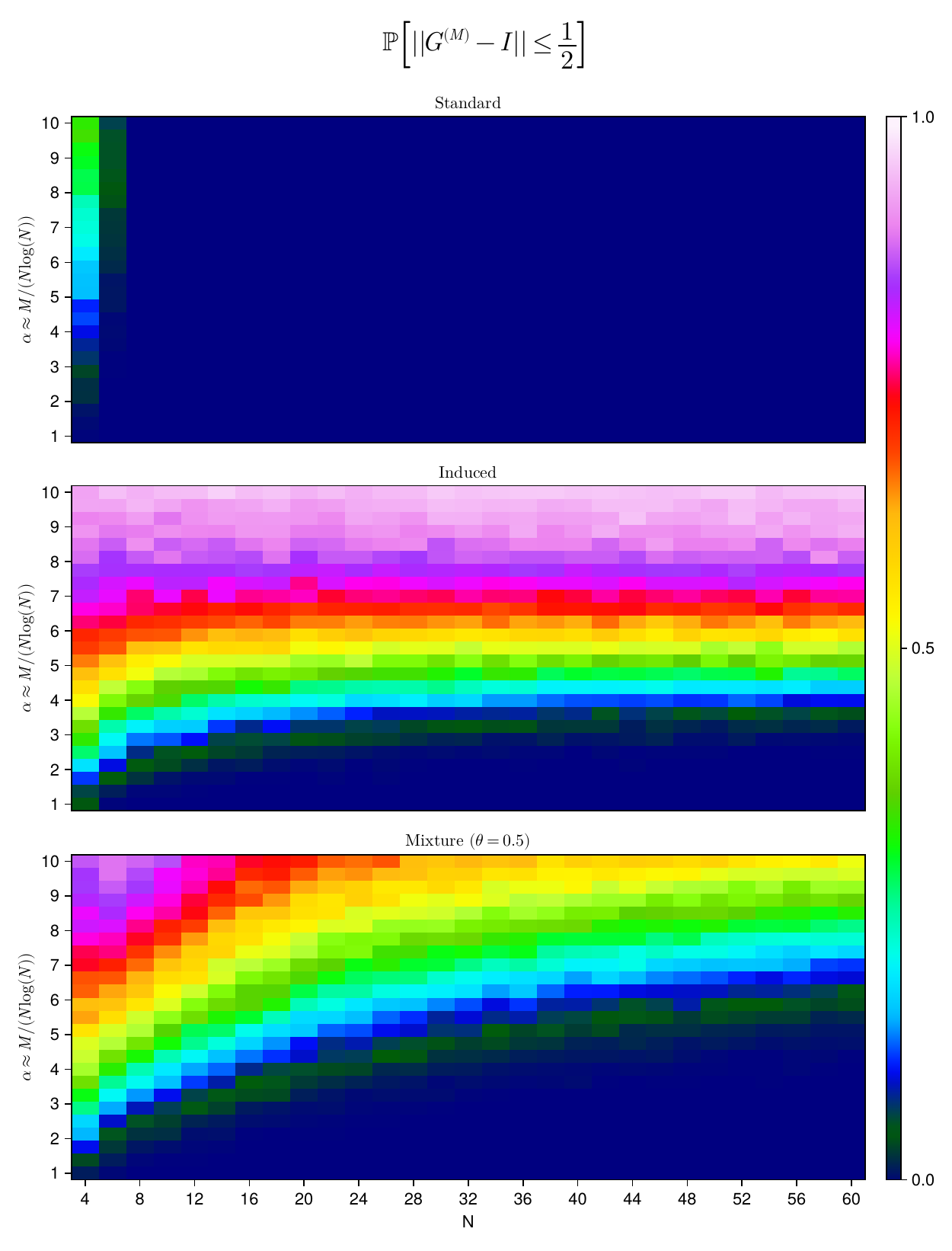}
    \includegraphics[width=0.32\textwidth]{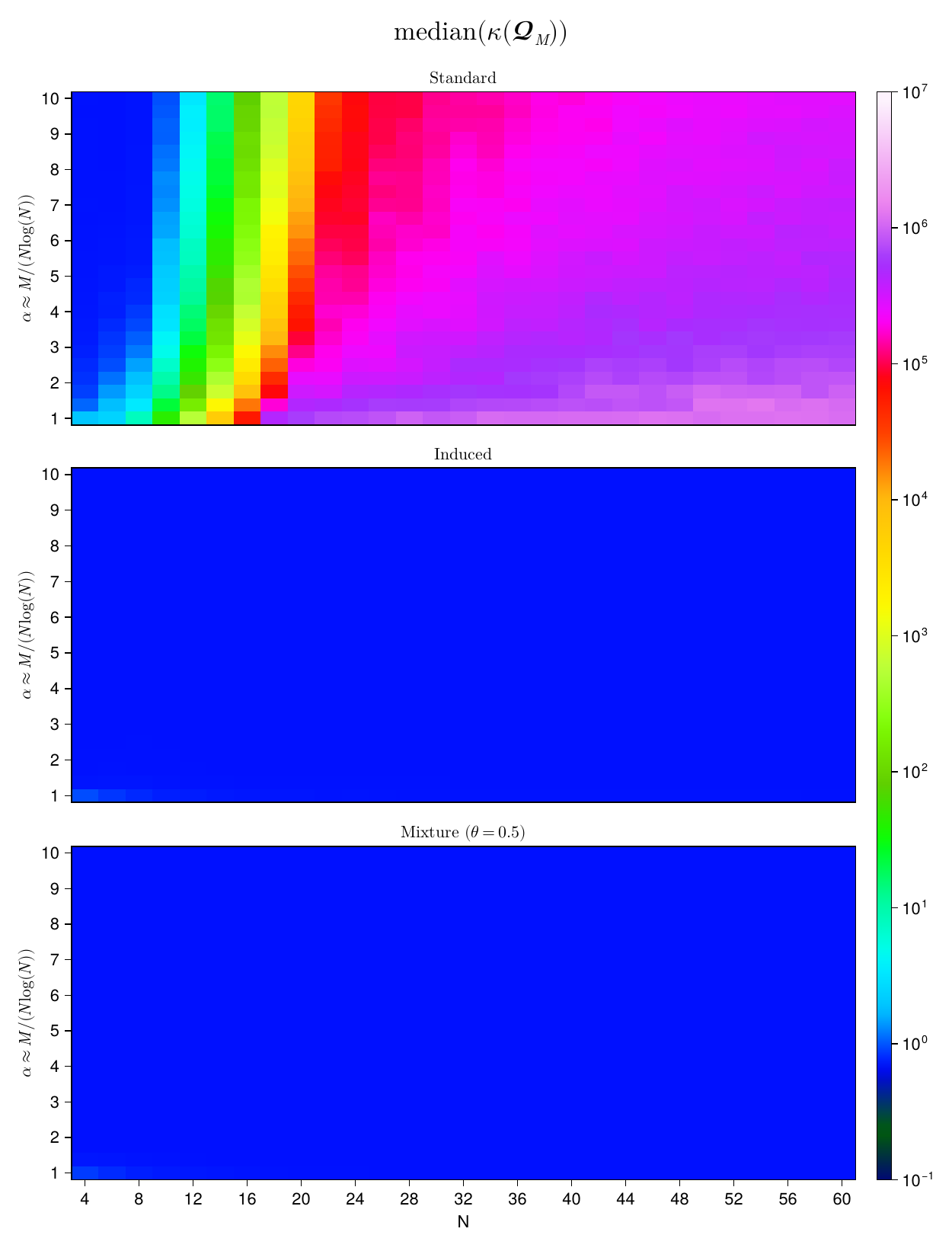}
    \includegraphics[width=0.32\textwidth]{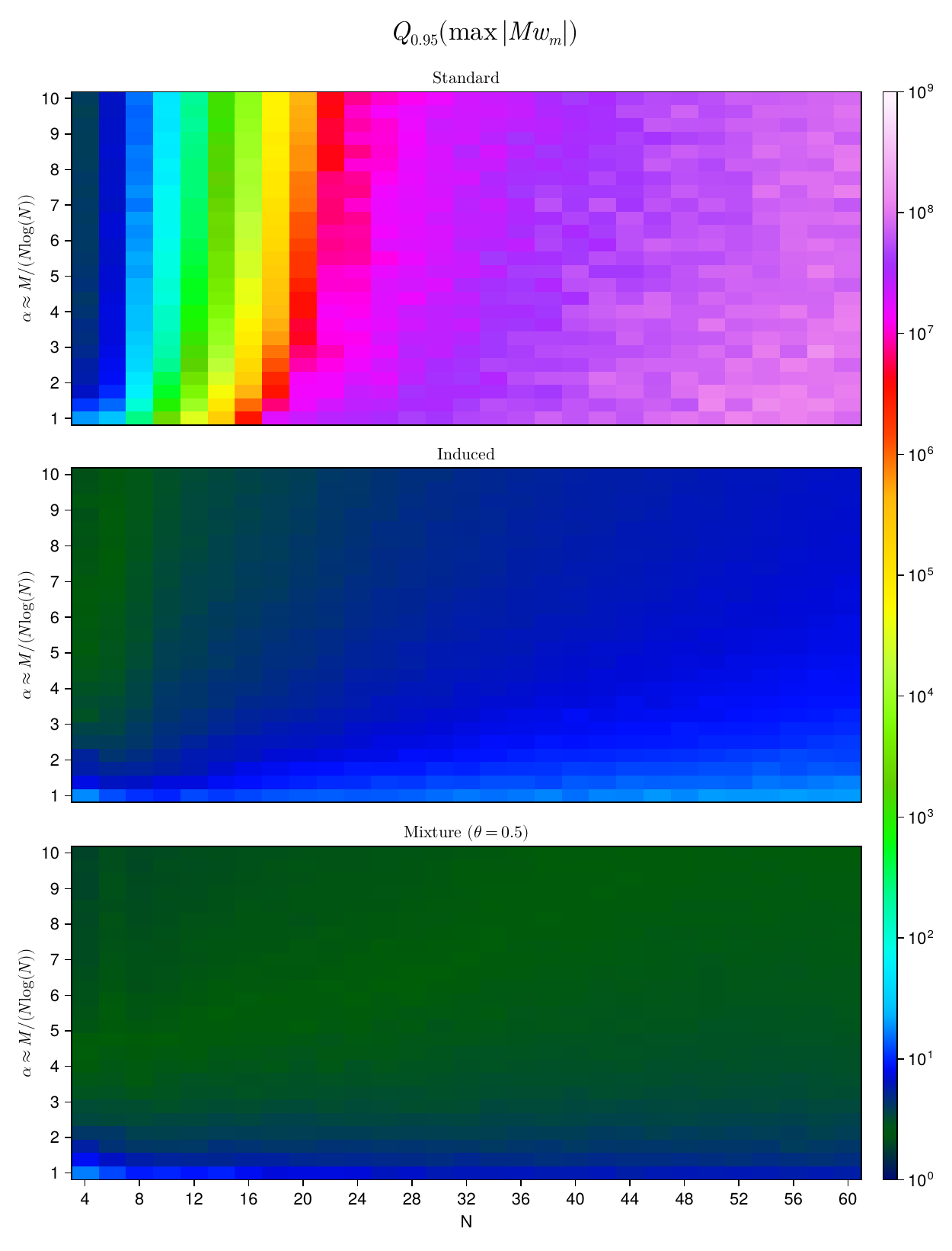}
    \caption{Left, empirical probability that $\|\mathbf{G}^{(M)} - \mathbf{I}\| \leq \frac{1}{2}$. Middle, empirical median condition number of the intermediate quadrature rule. Right, empirical 95th percentile of the maximum absolute weight rescaled by $M$. The rows correspond to standard sampling, induced sampling, and mixture sampling with $\theta=0.5$ from top to bottom respectively. Each square is computed from 1000 independent trials.}
    \label{fig:modified_hermite_heatmaps}
\end{figure}

\Cref{fig:modified_hermite_heatmaps} illustrates the numerical results for standard sampling, induced sampling, and mixture sampling with $\theta=0.5$. The left column shows that standard sampling rarely achieves Gramian concentration once \(N\geq8\). In agreement with the theory, induced sampling exhibits substantially stronger concentration on the scale \(M\asymp N\log N\). Mixture sampling with \(\theta=0.5\) incurs the predicted factor-of-two increase in the upper bound \(Q\leq N/\theta\), while retaining markedly better concentration than standard sampling. Over the range considered, its empirical concentration also exhibits some dependence on \(N\).

The middle column of \Cref{fig:modified_hermite_heatmaps} illustrates that, due to the difference in Gramian concentration, standard sampling results in significantly worse-conditioned intermediate quadrature rules than induced or mixture sampling. The conditioning of the induced- and mixture-sampled rules differs relatively little. The right column of \Cref{fig:modified_hermite_heatmaps} visualizes the 95th percentile of the maximum absolute weight rescaled by $M$. Clearly, due to poor conditioning, standard sampling has significantly larger weights in magnitude. Additionally, as expected by the theory, the mixture sampling reduces the magnitude of the largest weights.

Finally, \Cref{fig:modified_hermite_rules} shows representative strictly positive intermediate and pruned rules for \(N=10\) and \(M=500\). The intermediate weights obtained using induced and mixture sampling lie much closer to their corresponding limiting weights than do those obtained using standard sampling. The induced and mixture distributions place more mass in the tails, where \(k(x)\) is large. Relative to induced sampling, mixture sampling also reduces the largest limiting weights.

\begin{figure}[htbp]
    \centering
    \includegraphics[width=0.32\textwidth]{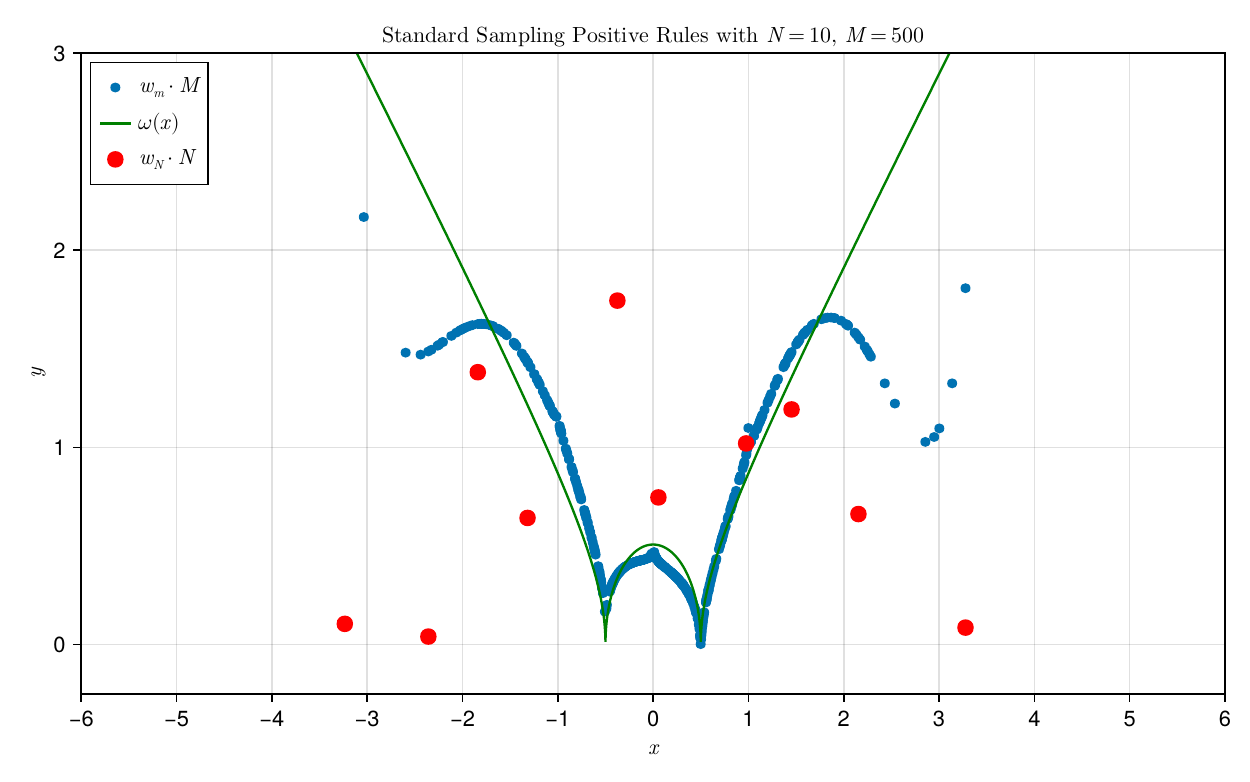}
    \includegraphics[width=0.32\textwidth]{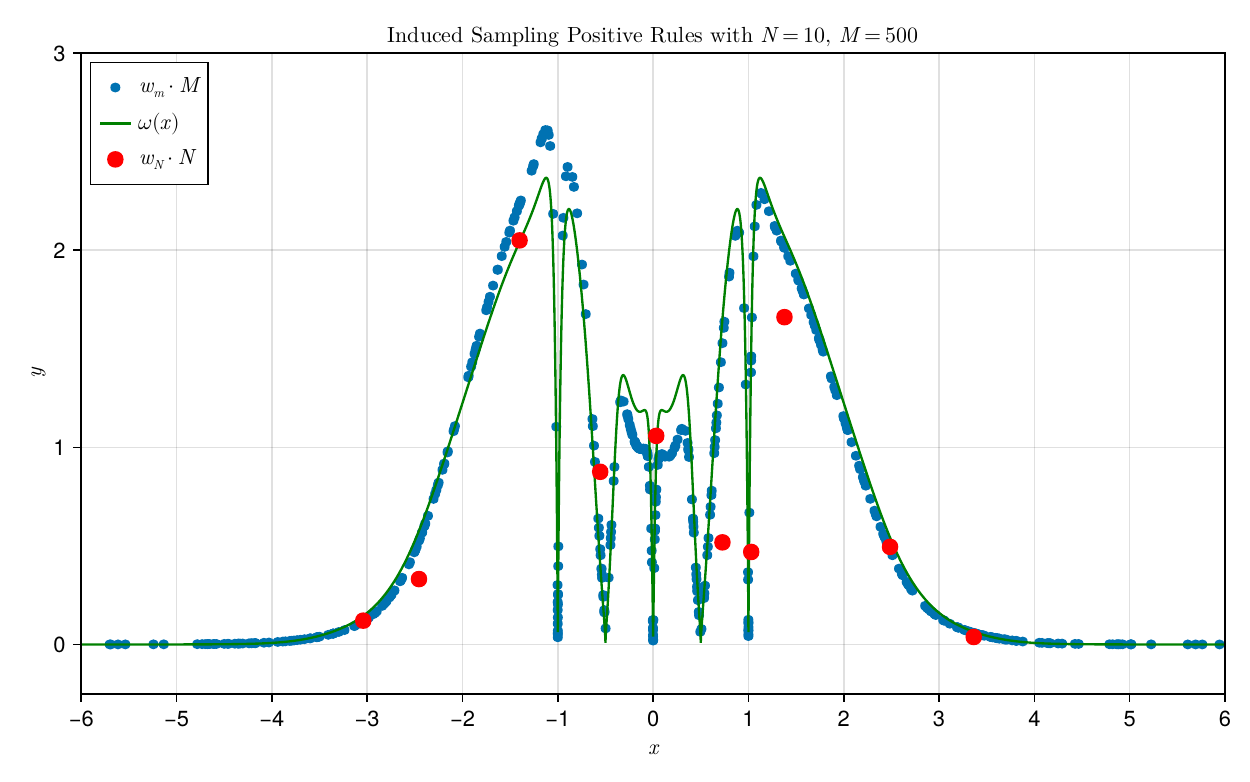}
    \includegraphics[width=0.32\textwidth]{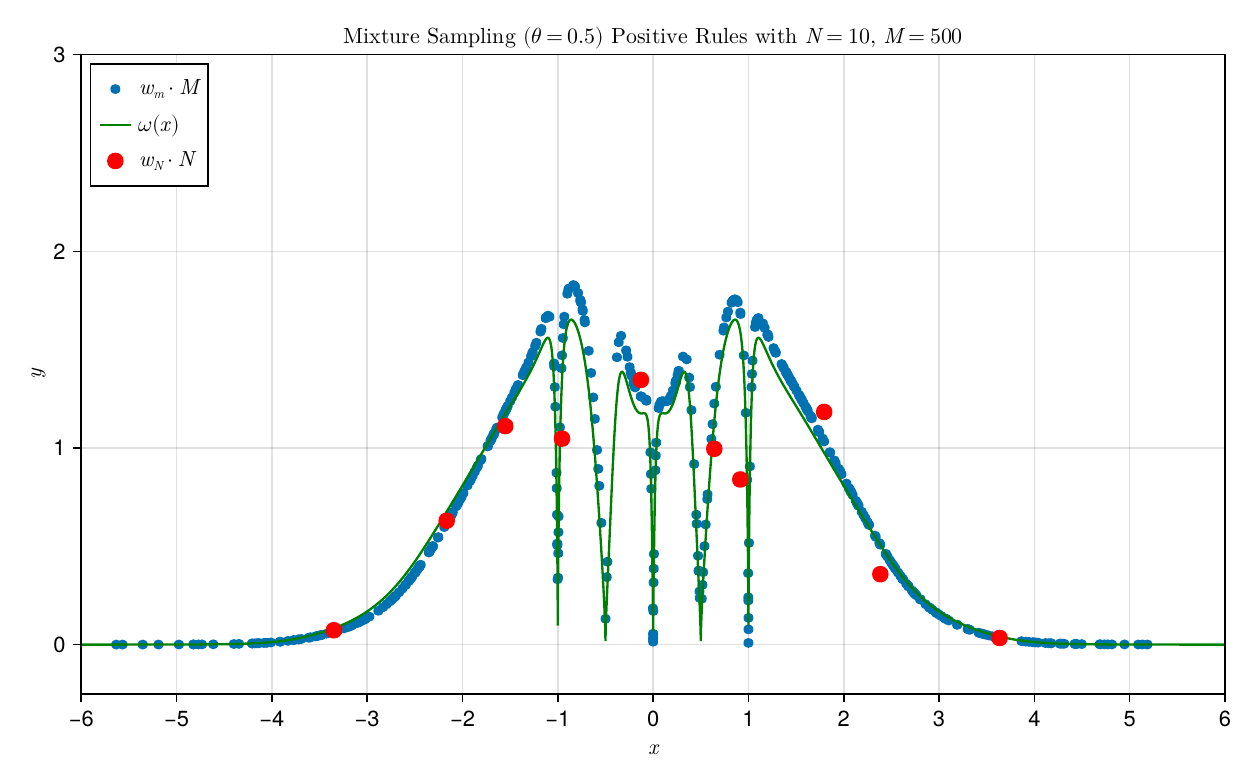}
    \caption{Examples of quadrature rules with $M=500$ and $N=10$ for (left) standard, (middle) induced, and (right) mixture sampling with $\theta=0.5$. The intermediate rules are strictly positive and are plotted alongside their limiting values. The strictly positive, $N$-point pruned rules are also plotted in red. Intermediate weights are scaled by \(M\) and pruned weights are scaled by \(N\).}
    \label{fig:modified_hermite_rules}
\end{figure}

Under induced and mixture sampling, the limiting weights approach zero near the zeros of the Riesz representer, near the singularities of the trial basis, and in the tails where \(k(x)\) is large. Under standard sampling, the limiting weight is simply \(L(x)\). This illustrates a difficulty with this approach for finding strictly positive rules: although induced and mixture sampling provide stronger Gramian concentration, they can also result in quadrature weights which concentrate very close to zero. 

We apply Carath\'eodory--Steinitz pruning to each positive intermediate rule. The procedure preserves exactness and nonnegativity and produces a nested rule supported on at most \(N\) nodes. The resulting pruned rules are plotted in red in \Cref{fig:modified_hermite_rules}.

\section{Disclosures}\label{sec:disclosures}

\subsection{Reproducibility Statement}
\label{ssec:reproducibility}

Software in Julia that reproduces the figures in \Cref{sec:numerics} is provided at \url{https://github.com/fbelik/ConstructiveTchakaloffRandomizedLeastSquares}. Seeds for pseudorandom number generators are fixed so that plots can be exactly reproduced.

\subsection{Data Availability Statement}
\label{ssec:data}
The data used to generate all plots in this submission are provided in the above repository. No external or extra data is required.

\subsection{Statement on the use of AI}
\label{ssec:ai-statement}

The authors used AI tools to proofread near-final drafts and identify typographical errors. They were also used as a repository cleanup tool for the provided software. The authors accept responsibility for all content and numerical results.

\subsection{Funding acknowledgements}
The authors acknowledge support from AFOSR award FA9550-23-1-0749 and NSF DMS-2136198.

\section{Conclusion}
\label{sec:conclusion}

We have analyzed a general randomized weighted least-squares construction for
exact and stable quadrature rules. Under small-ball relative admissibility
and reference-weight concentration conditions, we achieve, with high
probability, weighted stability and relative accuracy of the finite-sample
weights with respect to their limiting values. Our theory generalizes previously established uniform control on the Riesz representer $L$ and Christoffel function $k$, permitting unbounded values of $\sqrt{k}/L$. We have also introduced a sampling mixture strategy which achieves reference-weight concentration and preserves relative admissibility under suitable hypotheses. Under a nonnegative pair condition, relative concentration of the weights yields positive quadrature rules. Combining this with Carath\'eodory--Steinitz pruning yields interpolatory-size positive rules, giving a constructive generalized Tchakaloff result. We explore the specifics of our theory through a numerical example with unbounded values of $\sqrt{k}/L$, which reveals performance tradeoffs between different choices of samplers.

Future work may investigate extensions to Hilbert-valued integrands. 
Paired with currently allowed Banach-space domains $D$, this points to quadrature for vector-valued solution maps over parameter or function spaces. 
Several analytical questions nevertheless remain, such as
identification of space-dependent estimates for \(J\) under the minimal admissibility hypotheses. 
Another future direction could be to weaken the sub-exponential reference-weight concentration assumption. The examples in \Cref{ssec:assumptions-discussion} show that relative and absolute admissibility can hold even when the weighted reference magnitude has only polynomial tails, suggesting that useful high-probability stability estimates may remain available under weaker moment or tail assumptions.

\bibliographystyle{plainnat}
\bibliography{literature.bib}

@article{Cools1997,
  author =        {Cools, Ronald},
  journal =       {Acta Numerica},
  month =         jan,
  pages =         {1},
  publisher =     {Cambridge University Press (CUP)},
  title =         {Constructing cubature formulae: the science behind
                   the art},
  volume =        {6},
  year =          {1997},
  doi =           {10.1017/s0962492900002701},
  issn =          {1474-0508},
}

@Article{Cools1989,
  author    = {Cools, Ronald and Haegemans, Ann},
  title     = {On the construction of multi-dimensional embedded cubature formulae},
  journal   = {Numerische Mathematik},
  year      = {1989},
  volume    = {55},
  number    = {6},
  pages     = {735--745},
  month     = nov,
  issn      = {0945-3245},
  doi       = {10.1007/bf01389339},
  publisher = {Springer Science and Business Media {LLC}},
}

@book{Brass2011,
  author =        {Brass, Helmut and Petras, Knut},
  editor =        {Cohen, Ralph L. and Ellenberg, Jordan S. and
                   Singer, Michael A. and Sudakov, Benjamin},
  month =         oct,
  publisher =     {American Mathematical Society},
  series =        {Mathematical Surveys and Monographs},
  title =         {Quadrature Theory},
  volume =        {178},
  year =          {2011},
  doi =           {10.1090/surv/178},
}

@article{Golub1969,
  author =        {Golub, Gene H. and Welsch, John H.},
  journal =       {Mathematics of Computation},
  month =         may,
  number =        {106},
  pages =         {221--230},
  publisher =     {American Mathematical Society ({AMS})},
  title =         {Calculation of {Gauss} quadrature rules},
  volume =        {23},
  year =          {1969},
  doi =           {10.1090/s0025-5718-69-99647-1},
}

@article{Clenshaw1960,
  author =        {Clenshaw, C. W. and Curtis, A. R.},
  journal =       {Numerische Mathematik},
  month =         dec,
  number =        {1},
  pages =         {197--205},
  publisher =     {Springer Nature},
  title =         {A method for numerical integration on an automatic
                   computer},
  volume =        {2},
  year =          {1960},
  doi =           {10.1007/bf01386223},
}

@article{Novak1999,
  author =        {Novak, Erich and Ritter, Klaus},
  journal =       {Constructive Approximation},
  month =         oct,
  number =        {4},
  pages =         {499},
  publisher =     {Springer},
  title =         {Simple Cubature Formulas with High Polynomial
                   Exactness},
  volume =        {15},
  year =          {1999},
  doi =           {10.1007/s003659900119},
  issn =          {1432-0940},
}

@TechReport{Rabinowitz1986,
  author      = {Rabinowitz, P. and Kautsky, J. and Elhay, S. and Butcher, J.},
  title       = {On sequences of imbedded integration rules},
  institution = {The University of Adelaide},
  year        = {1986},
  type        = {Technical Report},
  number      = {86-02},
  month       = sep,
}

@article{Smolyak1963,
  author =        {Smolyak, S.},
  journal =       {Soviet Mathematics, Doklady},
  pages =         {240--243},
  title =         {Quadrature and interpolation formulas for tensor
                   products of certain classes of functions},
  volume =        {4},
  year =          {1963},
}

@article{Gerstner1998,
  author =        {Gerstner, Thomas and Griebel, Michael},
  journal =       {Numerical Algorithms},
  number =        {3/4},
  pages =         {209--232},
  publisher =     {Springer Nature},
  title =         {Numerical integration using sparse grids},
  volume =        {18},
  year =          {1998},
  doi =           {10.1023/a:1019129717644},
}

@article{Jakeman2017,
  author =        {Jakeman, John D. and Narayan, Akil},
  journal =       {Computer Methods in Applied Mechanics and
                   Engineering},
  month =         aug,
  pages =         {134--161},
  publisher =     {Elsevier {BV}},
  title =         {Generation and application of multivariate polynomial
                   quadrature rules},
  volume =        {338},
  year =          {2018},
  doi =           {10.1016/j.cma.2018.04.009},
}

@article{Keshavarzzadeh2018,
  author =        {Keshavarzzadeh, Vahid and Kirby, Robert M. and
                   Narayan, Akil},
  journal =       {ArXiV 1808.03707},
  title =         {Generation of Nested Quadrature Rules for Generic
                   Weight Functions via Numerical Optimization:
                   Application to Sparse Grids},
  year =          {2018},
}

@article{Keshavarzzadeh2018a,
  author =        {Keshavarzzadeh, Vahid and Kirby, Robert M. and
                   Narayan, Akil},
  journal =       {{SIAM} Journal on Scientific Computing},
  month =         jan,
  number =        {4},
  pages =         {A2033--A2061},
  publisher =     {Society for Industrial {\&} Applied Mathematics
                   ({SIAM})},
  title =         {Numerical Integration in Multiple Dimensions with
                   Designed Quadrature},
  volume =        {40},
  year =          {2018},
  doi =           {10.1137/17m1137875},
}

@article{Bos2018b,
  author =        {van den Bos, L. M. M. and Sanderse, B. and
                   Bierbooms, W. A. A. M. and van Bussel, G. J. W.},
  journal =       {ArXiV 1809.09842},
  title =         {Generating nested quadrature rules with positive
                   weights based on arbitrary sample sets},
  year =          {2018},
}

@article{Huybrechs2009,
  author =        {Huybrechs, Daan},
  journal =       {Journal of Computational and Applied Mathematics},
  month =         sep,
  number =        {2},
  pages =         {933--947},
  publisher =     {Elsevier BV},
  title =         {Stable high-order quadrature rules with equidistant
                   points},
  volume =        {231},
  year =          {2009},
  doi =           {10.1016/j.cam.2009.05.018},
  issn =          {0377-0427},
}

@article{Wilson1970,
  author =        {Wilson, M. Wayne},
  journal =       {Mathematics of Computation},
  number =        {110},
  pages =         {271--282},
  title =         {Discrete Least Squares and Quadrature Formulas},
  volume =        {24},
  year =          {1970},
}

@article{Bos2016b,
  author =        {van den Bos, L. M. M. and Koren, B. and
                   Dwight, R. P.},
  journal =       {Journal of Computational Physics},
  month =         mar,
  pages =         {418--445},
  publisher =     {Elsevier {BV}},
  title =         {Non-intrusive uncertainty quantification using
                   reduced cubature rules},
  volume =        {332},
  year =          {2017},
  doi =           {10.1016/j.jcp.2016.12.011},
}

@article{Wilson1969,
  author =        {Wilson, M. Wayne},
  journal =       {Mathematics of Computation},
  month =         may,
  number =        {106},
  pages =         {253--253},
  publisher =     {American Mathematical Society ({AMS})},
  title =         {A general algorithm for nonnegative quadrature
                   formulas},
  volume =        {23},
  year =          {1969},
  doi =           {10.1090/s0025-5718-1969-0242374-1},
}

@article{Bayer2006,
  author =        {Bayer, Christian and Teichmann, Josef},
  journal =       {Proceedings of the American mathematical society},
  number =        {10},
  pages =         {3035--3040},
  title =         {The proof of {Tchakaloff{\textquoteright}s} theorem},
  volume =        {134},
  year =          {2006},
}

@article{Tchakaloff1957,
  author =        {Tchakaloff, Vladimir},
  journal =       {Bulletin des Sciences Math{\'{e}}matiques},
  pages =         {123--134},
  title =         {Formules de cubatures m{\'{e}}caniques a coefficients
                   non n{\'{e}}gatifs},
  year =          {1957},
}

@Book{Jackson1982,
  title     = {Theory of Approximation (Colloquium Publications)},
  publisher = {American Mathematical Society},
  year      = {1982},
  author    = {Jackson, Dunham},
  isbn      = {978-0821810118},
}

@Article{Passow1970,
  author    = {Passow, Eli},
  title     = {Another proof of {Jackson's} theorem},
  journal   = {Journal of Approximation Theory},
  year      = {1970},
  volume    = {3},
  number    = {2},
  pages     = {146--148},
  month     = jun,
  doi       = {10.1016/0021-9045(70)90022-5},
  publisher = {Elsevier {BV}},
}

@article{Davis1967,
  author =        {Davis, Philip J.},
  journal =       {Mathematics of Computation},
  month =         oct,
  number =        {100},
  pages =         {578--582},
  publisher =     {American Mathematical Society ({AMS})},
  title =         {A Construction of Nonnegative Approximate
                   Quadratures},
  volume =        {21},
  year =          {1967},
  doi =           {10.1090/s0025-5718-1967-0222534-4},
}

@article{Ibrahimoglu2016,
  author =        {Ibrahimoglu, Bayram Ali},
  journal =       {Journal of Inequalities and Applications},
  month =         mar,
  number =        {1},
  pages =         {93},
  publisher =     {Springer Nature},
  title =         {Lebesgue functions and {Lebesgue} constants in
                   polynomial interpolation},
  volume =        {2016},
  year =          {2016},
  doi =           {10.1186/s13660-016-1030-3},
}

@book{Watson1980,
  author =        {Watson, G. A.},
  publisher =     {John Wiley \& Sons Ltd},
  title =         {Approximation Theory and Numerical Methods},
  year =          {1980},
  isbn =          {0471277061},
}

@article{cohen_stability_2013,
	title = {On the {Stability} and {Accuracy} of {Least} {Squares} {Approximations}},
	volume = {13},
	issn = {1615-3375, 1615-3383},
	doi = {10.1007/s10208-013-9142-3},
	number = {5},
	journal = {Foundations of Computational Mathematics},
	author = {Cohen, Albert and Davenport, Mark A. and Leviatan, Dany},
	month = oct,
	year = {2013},
	pages = {819--834},
}

@article{cohen_optimal_2017,
	title = {Optimal weighted least-squares methods},
	volume = {3},
	issn = {2426-8399},
	doi = {10.5802/smai-jcm.24},
	journal = {SMAI Journal of Computational Mathematics},
	author = {Cohen, Albert and Migliorati, Giovanni},
	year = {2017},
	note = {arxiv:1608.00512 [math.NA]},
	pages = {181--203},
}

@article{tropp_user-friendly_2012,
	title = {User-{Friendly} {Tail} {Bounds} for {Sums} of {Random} {Matrices}},
	volume = {12},
	issn = {1615-3375, 1615-3383},
	url = {http://link.springer.com/article/10.1007/s10208-011-9099-z},
	doi = {10.1007/s10208-011-9099-z},
	language = {en},
	number = {4},
	urldate = {2014-06-10},
	journal = {Foundations of Computational Mathematics},
	author = {Tropp, Joel A.},
	month = aug,
	year = {2012},
	pages = {389--434},
}

@misc{belik_efficient_2025,
    title = {Efficient and robust Carath\'{e}odory-Steinitz pruning of positive discrete measures},
    author = {B\v{e}l\'{i}k, Filip and Chan, Jesse and Narayan, Akil},
    year = {2025},
    eprint={2510.14916},
    archivePrefix={arXiv},
    primaryClass={math.NA},
    url={http://arxiv.org/abs/2510.14916}
}

@misc{schafer_2025,
      title={Beyond Tchakaloff Quadrature: Positive Functionals, Frames and Widths}, 
      author={Martin Schäfer and Tino Ullrich},
      year={2025},
      eprint={2511.15425},
      archivePrefix={arXiv},
      primaryClass={math.FA},
      url={https://arxiv.org/abs/2511.15425}, 
}

@article{migliorati2022stable,
  title={Stable high-order randomized cubature formulae in arbitrary dimension},
  author={Migliorati, Giovanni and Nobile, Fabio},
  journal={Journal of Approximation Theory},
  volume={275},
  pages={105706},
  year={2022},
  publisher={Elsevier}
}

@article{vershynin2019high,
  title={High-dimensional probability},
  author={Vershynin, Roman},
  journal={Cambridge Series in Statistical and Probabilistic Mathematics},
  volume={47},
  year={2019}
}

@article{lubinsky2007survey,
  title={A survey of weighted approximation for exponential weights},
  author={Lubinsky, Doron S},
  journal={arXiv preprint math/0701099},
  year={2007}
}

@book{dick_digital_2010,
  title = {Digital {{Nets}} and {{Sequences}}: {{Discrepancy Theory}} and {{Quasi-Monte Carlo Integration}}},
  shorttitle = {Digital {{Nets}} and {{Sequences}}},
  author = {Dick, Josef and Pillichshammer, Friedrich},
  year = 2010,
  edition = {First},
  publisher = {Cambridge University Press},
  address = {New York, NY},
  isbn = {978-0-521-19159-3}
}

@book{lemieux_monte_2009,
  title = {Monte {{Carlo}} and {{Quasi-Monte Carlo Sampling}}},
  author = {Lemieux, Christiane},
  year = 2009,
  series = {Springer {{Series}} in {{Statistics}}},
  publisher = {Springer New York},
  address = {New York, NY},
  doi = {10.1007/978-0-387-78165-5},
  isbn = {978-0-387-78164-8 978-0-387-78165-5}
}

@book{niederreiter_random_1992,
  title = {Random {{Number Generation}} and {{Quasi-Monte Carlo Methods}}},
  author = {Niederreiter, H.},
  year = 1992,
  series = {{{CBMS-NSF Regional Conference Series}} in {{Applied Mathematics}}},
  publisher = {{Society for Industrial and Applied Mathematics}},
  doi = {10.1137/1.9781611970081},
  isbn = {978-0-89871-295-7}
}

% Start appendices on separate page
\clearpage

\appendix

\section{Admissible subspaces}
\label{app:subspaces}

This appendix gives examples of tuples
\((\mu,\rho,V,\mathcal L)\) satisfying the relative admissibility condition
of \Cref{ssec:assumption}. Recall that
\[
    p_J
    =
    \rho\left(
        \cos\gamma\leq J^{-1/2}
    \right),
    \qquad
    \cos\gamma(x)
    =
    \frac{|L(x)|}
    {\|\mathcal L\|_{V^*}\sqrt{k(x)}},
\]
and that relative admissibility requires \(Q<\infty\) and
\[
    Jp_J\longrightarrow0.
\]

A necessary condition is
\[
    \rho\bigl(\{x\in D:L(x)=0,\ k(x)>0\}\bigr)=0,
\]
since \(\cos\gamma=0\) on this set. More generally, admissibility depends on
the \(\rho\)-mass of regions where \(L\) is small relative to \(\sqrt{k}\).
A convenient sufficient condition is
\[
    \frac{1}{\cos\gamma}
    =
    \frac{\|\mathcal L\|_{V^*}\sqrt{k}}{|L|}
    \in L^2_\rho(D);
\]
see \eqref{eq:relative-admissibility-sufficient}. Here, we consider three regimes:
\[
    \big(K<\infty,\quad L_{\inf}>0\big);
    \qquad\qquad
    \big(K<\infty,\quad L_{\inf}=0\big);
    \qquad\qquad
    \big(K=\infty,\quad L_{\inf}>0\big),
\]
where
\[
    K=\sup_{x\in D}k(x),
    \qquad
    L_{\inf}
    =
    \inf_{x\in D}
    \frac{|L(x)|}{\|\mathcal L\|_{V^*}}.
\]
The first case is immediate. In the second, we treat isolated algebraic
zeros of \(L\). In the third, we derive a sufficient Christoffel-tail
condition under induced sampling and verify it for spaces with isolated
singularities and for polynomial spaces under exponentially decaying
measures. This last regime includes integration on polynomial spaces over
unbounded domains, where \(L\equiv1\) when \(1\in V\).

\subsection{Bounded Christoffel function and a nonvanishing representer}
\label{ssec:finite-K-positive-Linf}

The first case is immediate: if \(k\) is uniformly bounded and \(L\) is
uniformly bounded away from zero, then \(\cos\gamma\) is uniformly separated from zero. This simplest setting admits sharper sample complexity bounds for Theorem \ref{thm:finite-sample-weight-approximation}.

\begin{lemma}
\label{lem:finite-K-positive-Linf}
Suppose that \(K<\infty\) and \(L_{\inf}>0\).
Then \((\mu,\rho,V,\mathcal L)\) is relatively admissible for every sampling measure \(\rho\) satisfying \(Q<\infty\). More precisely,
\[
    p_J=0
    \qquad\text{whenever}\qquad
    J>\frac{K}{L_{\inf}^2}.
\]
\end{lemma}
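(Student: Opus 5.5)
The plan is to obtain a uniform lower bound on \(\cos\gamma\) that holds \(\rho\)-almost surely. That immediately makes the tail probability \(p_J\) vanish for large \(J\). Relative admissibility then follows trivially, since \(Q<\infty\) is assumed.

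\textbf{Step 1: pointwise bound.} Fix a point \(x\) with \(k(x)>0\). From the definition \eqref{eq:alignment-angle},
\[
    \cos\gamma(x)=\frac{|L(x)|}{\|\mathcal L\|_{V^*}\sqrt{k(x)}}\geq\frac{L_{\inf}}{\sqrt{K}},
\]
using \(|L(x)|/\|\mathcal L\|_{V^*}\geq L_{\inf}\) and \(k(x)\leq K\). At points where \(k(x)=0\), the convention gives \(\cos\gamma(x)=1\). We also have \(L_{\inf}\leq\sqrt{K}\), because \(|L(x)|\leq\|L\|\sqrt{k(x)}\). Hence \(L_{\inf}/\sqrt K\leq1\), and the lower bound \(\cos\gamma\geq L_{\inf}/\sqrt K\) holds at every point.

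\textbf{Step 2: representatives.} The only subtlety is that \(K\) is an essential supremum, so \(k\leq K\) holds only \(\mu\)-a.e. The same goes for the infimum defining \(L_{\inf}\) if it is read essentially. Since \(\rho\ll\mu\), the bound of Step 1 still holds \(\rho\)-almost surely, and that is all \(p_J\) sees. I expect this measure-theoretic bookkeeping to be the only point requiring care; it is not a real obstacle.

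\textbf{Step 3: conclusion.} Suppose \(J>K/L_{\inf}^2\). Then \(J^{-1/2}<L_{\inf}/\sqrt K\leq\cos\gamma(X)\) almost surely for \(X\sim\rho\). Therefore
\[
    p_J=\Prob\bigl[\cos\gamma(X)\leq J^{-1/2}\bigr]=0,
\]
which gives \(Jp_J=0\) for all such \(J\), and in particular \(\lim_{J\to\infty}Jp_J=0\). Together with the hypothesis \(Q<\infty\), this is exactly relative admissibility in the sense of Definition~\ref{ass:relative-admissibility}.

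As a remark, one can then take \(J\) just above \(\max\{K/L_{\inf}^2,\,1,\,\sigma^2/c_0^2\}\) in \eqref{eq:J-treshold}. With this choice \eqref{eq:weight-window-choice} holds trivially because its left side is \(0\), which yields the sharper explicit sample complexity alluded to in the statement.
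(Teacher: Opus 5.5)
Your proof is correct and follows the paper's argument. The uniform bound \(1/\cos\gamma\le\sqrt K/L_{\inf}\) forces \(p_J=0\) once \(J>K/L_{\inf}^2\), so \(Jp_J\to0\), and together with \(Q<\infty\) this gives relative admissibility; your extra care with the \(k=0\) convention and the \(\rho\)-almost-sure reading of \(k\le K\) is a harmless refinement that the paper leaves implicit.
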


\begin{proof}
For every \(x\in D\),
\[
    \frac{1}{\cos\gamma(x)}
    \leq
    \frac{\sqrt K}{L_{\inf}}.
\]
Thus, \(p_J=0\) whenever \(J>K/L_{\inf}^2\), and consequently
\(Jp_J\to0\).
\end{proof}
\begin{remark}
In this case, one may, by \eqref{eq:J-treshold}, take
\[
    J>
    \max\left\{
        1,\,
        \frac{\sigma^2}{c_0^2},\,
        \frac{K}{L_{\inf}^2}
    \right\}.
\]
Hence, the sample size in \Cref{thm:finite-sample-weight-approximation}
satisfies
\[
    M
    =
    O\left(
        \frac{Q}{\sigma^2}
        \max\left\{
            1,\,
            \frac{\sigma^2}{c_0^2},\,
            \frac{K}{L_{\inf}^2}
        \right\}
        \log N
    \right).
\]
\end{remark}

\subsection{Bounded Christoffel function and isolated zeros}
\label{ssec:finite-K-zero-Linf}

We next allow \(L_{\inf}=0\), assuming that the zeros of \(L\) are
sufficiently isolated.

\begin{lemma}
\label{lem:finite-K-isolated-zero}
Suppose that \(K<\infty\), \(Q < \infty\), \(D\subset\mathbb R^d\), and 
that $L$ has an isolated zero near $\mathbf{c} \in \mathrm{int}(D)$ satisfying
\[
    \frac{|L(\x)|}{\|\mathcal L\|_{V^*}}
    \geq
    C\|\x-\mathbf{c}\|_2^\alpha
\]
for some \(C>0\) and \(\alpha>0\), and that \(\rho\) has a bounded
Lebesgue density near \(\mathbf{c}\). If
\[
    \alpha<\frac d2,
\]
then \((\mu,\rho,V,\mathcal L)\) is relatively admissible.
\end{lemma}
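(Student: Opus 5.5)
We plan to verify the sufficient condition $(\cos\gamma)^{-1}\in L^2_\rho(D)$ from \eqref{eq:relative-admissibility-sufficient}, since it gives $Jp_J\to0$ by dominated convergence. Together with the standing assumption $Q<\infty$, this is exactly relative admissibility. The integral splits into a small neighborhood of $\mathbf c$, where the lower bound on $|L|$ and the bounded density of $\rho$ are available, and the complement, where we argue that $\cos\gamma$ stays away from zero.

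\emph{Near $\mathbf c$.} Pick $r>0$ such that the ball $B_r(\mathbf c)$ lies in $D$, the lower bound $|L(\x)|/\|\mathcal L\|_{V^*}\ge C\|\x-\mathbf c\|_2^\alpha$ holds on it, and $\rho$ has density at most $\bar\rho$ there. Using $k\le K$, we get on $B_r(\mathbf c)$
\[
    (\cos\gamma(\x))^{-2}
    =
    \frac{\|\mathcal L\|_{V^*}^2 k(\x)}{|L(\x)|^2}
    \le
    \frac{K}{C^2}\|\x-\mathbf c\|_2^{-2\alpha}.
\]
Passing to polar coordinates gives
\[
    \int_{B_r(\mathbf c)}(\cos\gamma)^{-2}\,\dx\rho
    \le
    \frac{K\bar\rho}{C^2}\,|S^{d-1}|\int_0^r s^{d-1-2\alpha}\,\dx s,
\]
which is finite precisely because $2\alpha<d$. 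This is the place where the hypothesis $\alpha<d/2$ is used.

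\emph{Away from $\mathbf c$.} This is the main obstacle, because the statement only controls $L$ near $\mathbf c$. We read ``isolated zero'' in the sense of the discussion in \Cref{ssec:assumptions-discussion}, item 2: outside the neighborhood, $|L|$ is bounded below by some $L_r>0$ on the support of $\rho$ (at least $\rho$-essentially). Then $(\cos\gamma)^{-2}\le K/L_r^2$ there, and its $\rho$-integral is at most $K/L_r^2$ because $\rho$ is a probability measure.

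If instead $L$ has finitely many such zeros, we apply the local bound around each one and the same lower-bound argument on the remaining set. Should the intended reading turn out to be local only, we would state this extra nondegeneracy away from $\mathbf c$ explicitly as an assumption. Either way we obtain $(\cos\gamma)^{-1}\in L^2_\rho$, and the conclusion follows from \eqref{eq:relative-admissibility-sufficient}.
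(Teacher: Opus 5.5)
Your argument is correct. It shares the paper's skeleton: confine the difficulty to a ball around $\mathbf c$, combine $k\le K$ with the local lower bound on $|L|$, and use the bounded Lebesgue density of $\rho$. It closes differently, though.

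The paper never integrates $(\cos\gamma)^{-2}$. It shows directly that $\{\cos\gamma\le J^{-1/2}\}\subseteq B(\mathbf c,C'J^{-1/(2\alpha)})$ for large $J$, hence $p_J\lesssim J^{-d/(2\alpha)}$ and $Jp_J\lesssim J^{1-d/(2\alpha)}\to0$. You instead verify the integrability criterion \eqref{eq:relative-admissibility-sufficient} and let dominated convergence finish.

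Your route reuses a criterion already in the paper and is slightly quicker. As written, however, it only gives $Jp_J\to0$ qualitatively. The paper's tail bound supplies the explicit decay $r_J=O(J^{-\beta})$ with $\beta=d/(2\alpha)>1$, which the discussion in \Cref{ssec:assumptions-discussion} uses to choose $J$. Your polar computation does show $(\cos\gamma)^{-1}\in L^r_\rho$ for every $r<d/\alpha$. So Markov's inequality, or \Cref{lem:quant-sample-complexity}, recovers a rate, but only with exponent $r/2$ strictly below $d/(2\alpha)$.

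Finally, the hypothesis you flag, that $|L|$ is bounded below away from the neighborhood of $\mathbf c$, is not in the statement. The paper's proof invokes it silently as well (``Since $L$ is bounded away from zero outside a neighborhood of $\mathbf{c}$''). Stating it explicitly is therefore the right call, not a weakness of your proof.
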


\begin{proof}
Denote the bad set by
\[
    E_J
    \coloneqq
    \left\{
        \x\in D:
        \cos\gamma(\x)\leq J^{-1/2}
    \right\}.
\]
Since \(L\) is bounded away from zero outside a neighborhood of \(\mathbf{c}\),
the set \(E_J\) lies in that neighborhood for all sufficiently large \(J\).
Moreover, using \(k\leq K\) and the local lower bound on \(L\),
\[
    E_J
    \subseteq
    \left\{
        \x\in D:
        \frac{|L(\x)|}{\|\mathcal L\|_{V^*}}
        \leq
        \sqrt{\frac{K}{J}}
    \right\}
    \subseteq
    B\left(\mathbf{c},C'J^{-1/(2\alpha)}\right)
\]
for some \(C'>0\). Since \(\rho\) has a bounded Lebesgue density near
\(\mathbf{c}\),
\[
    p_J
    =
    \rho(E_J)
    \lesssim
    J^{-d/(2\alpha)}.
\]
Therefore,
\[
    Jp_J
    \lesssim
    J^{1-d/(2\alpha)}
    \longrightarrow0
\]
whenever \(\alpha<d/2\).
\end{proof}

Note that in dimensions $d\leq2$, this excludes the possibility of $L$ having simple roots. 

\begin{remark}
The same conclusion holds for finitely many isolated zeros
\(c_1,\ldots,c_R\). Indeed, the bad set is contained in a finite union of
balls of radii \(O(J^{-1/(2\alpha_j)})\), so
\[
    p_J
    \lesssim
    \sum_{j=1}^R J^{-d/(2\alpha_j)}.
\]
Thus, relative admissibility holds whenever
\(\alpha_j<d/2\) for every \(j\).
\end{remark}

\subsection{Unbounded Christoffel function and a nonvanishing representer}
\label{ssec:infinite-K-positive-Linf}

We now assume \(L_{\inf}>0\) and sample from the induced measure
\[
    d\rho_{\mathrm{ind}}
    =
    \frac{k}{N}\,d\mu.
\]
For \(J>0\), define the Christoffel upper-tail set
\[
    A_J
    \coloneqq
    \{x\in D:k(x)\geq J\}.
\]
The examples below verify the sufficient condition
\begin{equation}
\label{eq:christoffel-tail-admissibility}
    J\int_{A_J}k(x)\,d\mu(x)
    \longrightarrow0
    \qquad\text{as }J\to\infty.
\end{equation}

\begin{lemma}
\label{lem:induced-tail-admissibility}
Suppose that \(L_{\inf}>0\) and
\eqref{eq:christoffel-tail-admissibility} holds. Then
\((\mu,\rho_{\mathrm{ind}},V,\mathcal L)\) is relatively admissible.
\end{lemma}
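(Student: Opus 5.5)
Two conditions need checking: \(Q<\infty\), and \(Jp_J\to0\), where \(p_J\) is computed under \(\rho_{\mathrm{ind}}\). The first is immediate. By \eqref{eq:induced_measure}, \(k/\tau_{\mathrm{ind}}^2=N\) on \(\{k>0\}\), and both numerator and denominator vanish on \(\{k=0\}\), so \(Q=N<\infty\). Coverage \eqref{eq:sampling-coverage} holds trivially.

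For the tail, I would bound the bad event by a Christoffel upper-tail event. On \(\{k>0\}\),
\[
    \frac{1}{\cos\gamma(x)}=\frac{\|\mathcal L\|_{V^*}\sqrt{k(x)}}{|L(x)|}\le \frac{\sqrt{k(x)}}{L_{\inf}}.
\]
On \(\{k=0\}\) we have \(\cos\gamma=1\) by convention, and this set has \(\rho_{\mathrm{ind}}\)-measure zero anyway. Hence, for \(J\) large enough that \(J^{1/2}>1\),
\[
    \left\{\frac{1}{\cos\gamma(x)}\ge J^{1/2}\right\}\subseteq\left\{k(x)\ge L_{\inf}^2 J\right\}=A_{L_{\inf}^2J},
\]
up to a \(\rho_{\mathrm{ind}}\)-null set. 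It follows that
\[
    p_J\le \rho_{\mathrm{ind}}\bigl(A_{L_{\inf}^2J}\bigr)=\frac1N\int_{A_{L_{\inf}^2J}}k\,d\mu.
\]

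Next, set \(J'=L_{\inf}^2J\). Note that \(L_{\inf}\le |L(x)|/\|\mathcal L\|_{V^*}\le \sqrt{k(x)}\) pointwise, so \(L_{\inf}\) is a fixed positive constant. Then
\[
    Jp_J\le \frac{1}{NL_{\inf}^2}\,J'\int_{A_{J'}}k\,d\mu .
\]
As \(J\to\infty\) we have \(J'\to\infty\), and by \eqref{eq:christoffel-tail-admissibility} the right-hand side tends to \(0\). This gives \eqref{eq:relative-admissibility}, so the tuple is relatively admissible.

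I do not expect a real obstacle. The only delicate points are bookkeeping: the conventions on \(\{k=0\}\) and \(\{\tau^2=0\}\), which are \(\rho_{\mathrm{ind}}\)-null, and the fact that \(L_{\inf}\) is defined as an infimum over all of \(D\), which gives the pointwise lower bound on \(|L|\) directly. If one prefers to work only almost everywhere, the same inclusion holds \(\rho_{\mathrm{ind}}\)-a.e. and the argument is unchanged.
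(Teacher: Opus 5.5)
Your proof is correct and follows essentially the same route as the paper. Both arguments use $1/\cos\gamma \le \sqrt{k}/L_{\inf}$ to place the bad set inside $A_{L_{\inf}^2 J}$, then $\rho_{\mathrm{ind}}(A) = N^{-1}\int_A k\,d\mu$, then the rescaling $J' = L_{\inf}^2 J$; your explicit check that $Q = N < \infty$ and that $\{k=0\}$ is $\rho_{\mathrm{ind}}$-null is bookkeeping the paper leaves implicit.
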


\begin{proof}
Since
\[
    \cos\gamma(x)
    \geq
    \frac{L_{\inf}}{\sqrt{k(x)}},
\]
we have
\[
    \{\cos\gamma\leq J^{-1/2}\}
    \subseteq
    A_{L_{\inf}^2J}.
\]
Therefore,
\[
    Jp_J
    \leq
    \frac{J}{N}
    \int_{A_{L_{\inf}^2J}}k(x)\,d\mu(x).
\]
Setting \(J\gets L_{\inf}^2J\), the right-hand side tends to zero by
\eqref{eq:christoffel-tail-admissibility}.
\end{proof}

We next give examples satisfying the Christoffel-tail condition
\eqref{eq:christoffel-tail-admissibility}.

\begin{example}[An isolated singularity]
\label{ex:singularity}
Let \(\mu\) be Lebesgue measure on a compact domain
\(D\subset\mathbb R^d\), and suppose that the functions in \(V\) are bounded
away from a point \(\mathbf{c}\in D\), while \(V\) contains functions with 
a singularity at $\mathbf{c}$ of largest strength
\[
    \|\x-\mathbf{c}\|_2^{-\alpha},
\]
with $\alpha>0$. Then \(K=\infty\). Nevertheless, if \(\alpha<d/4\), the Christoffel-tail
condition \eqref{eq:christoffel-tail-admissibility} holds. Notice that this
is stronger than the condition \(\alpha<d/2\) required for the singular
function to belong to \(L^2_\mu(D)\).
\end{example}

\begin{proof}
After translating the singularity, we may assume that \(\mathbf{c}=\mathbf{0}\) lies in the
interior of \(D\). Let \(V\) be spanned by
\(\{\psi_n\}_{n=1}^N\), where
\[
  \lim_{\x \to \mathbf{0}} \psi_1(\x) \| \x\|_2^\alpha = c_0 \neq 0,
\]
and suppose that the remaining functions do not have stronger singularities
at the origin. Thus, \(\psi_1\) behaves like \(r^{-\alpha}\), where
\(r=\|\x\|_2\). The condition \(\alpha<d/2\) ensures that
\(\psi_1\in L^2_\mu(D)\).

For sufficiently small \(\epsilon>0\), suppose that
\[
  \max_{n=1,\ldots,N}
  \sup_{\x\in D\setminus B(\mathbf{0},\epsilon)}
  |\psi_n(\x)|
  \leq C.
\]
Since every orthonormal basis of \(V\) is obtained by a linear change of
basis, it follows that
\[
  k(\x) \asymp \|\x\|_2^{-2\alpha}
\]
near the origin, while \(k\) is bounded away from the origin. Consequently,
for all sufficiently large \(T\),
\[
    A_T\subset B(\mathbf{0},C T^{-1/(2\alpha)})
\]
for some constant \(C>0\). Using spherical coordinates,
\begin{align*}
  T \int_{A_T} k(\x)\dx{\mu}(\x)
  &\leq
  T \int_{B(\mathbf{0},C T^{-1/(2\alpha)})} k(\x)\dx{\x} \\
  &\lesssim
  T \int_0^{C T^{-1/(2\alpha)}} r^{d-1-2\alpha}\dx{r}\\
  &\lesssim
  T^{1-(d-2\alpha)/(2\alpha)}.
\end{align*}
The exponent is negative precisely when \(\alpha<d/4\), and hence
\[
    T\int_{A_T}k(\x)\dx{\mu}(\x)\longrightarrow0.
\]

The same argument applies to a singularity at any interior point
\(\bs c\in D\), as well as to any finite number of singularities of this
form.
\end{proof}

\begin{example}
\label{ex:freud}
Suppose $D\subset\R^d$ and that \(\mu\) has a density satisfying
\[
    d\mu(\x)\le C\exp(-\|\x\|_2^\alpha)\,d\x,
    \qquad C,\alpha>0,
\]
possibly only for \(\|\x\|_2\) sufficiently large.  This includes, for
example, Gaussian-type measures.  Let \(V\) be any
finite-dimensional polynomial subspace.  If \(N>1\), then typically \(K=\infty\).  Nevertheless, \((\mu,\rho_{\mathrm{ind}},V,\mathcal L)\) is relatively admissible.
\end{example}

\begin{proof}
Suppose $\mu$ is a measure with a density satisfying
\begin{align*}
  \dx{\mu}(\x) &\leq C \exp(-\|\x\|_2^\alpha), & C, \alpha &> 0,
\end{align*}
for $\|\x\|_2$ sufficiently large.  Let $\x$ have coordinate representation 
\begin{align*}
  \x = (x^{(1)}, \ldots, x^{(d)}) \in \R^d,
\end{align*}
and let $p$ be the maximum (finite) polynomial degree of the polynomials in $V$. Without loss of generality we assume $p > 0$ (since if $p = 0$, then $K$ is finite).
Then since any orthonormal basis for $V$ can be expressed as a sum of monomials, we have
\begin{align*}
  k(\x) \leq \tilde{C} \left\| \x \right\|_2^{2p},
\end{align*}
where $\tilde{C}$ is finite but depends on $V$ and $d$. Now given some $T$ sufficiently large, we define
\begin{align*}
  R = \left(T/\tilde{C}\right)^{1/(2p)},
\end{align*}
so that $A_T \subset \R^d \setminus B(\mathbf{0},R)$. Thus,
\begin{align*}
  T \int_{A_T} k(\x) \dx{\mu}(\x) &\lesssim T \int_{\R^d \setminus B(\mathbf{0},R)} \|\x\|_2^{2p} \exp(-\|\x\|_2^\alpha) \dx{\x} \propto T \int_R^\infty r^{2p+d-1} \exp(-r^\alpha) \dx{r} \\
  &\propto T \int_{R^\alpha}^\infty t^{(2p+d-1)/\alpha} \exp(-t) t^{(1-\alpha)/\alpha} \dx{t} = T \, \Gamma\left(\frac{2p+d-\alpha}{\alpha} + 1, R^\alpha\right),
\end{align*}
where $\Gamma(\cdot,\cdot)$ is the upper incomplete Gamma function, defined as
\begin{align*}
  \Gamma(s,x) &\coloneqq \int_x^\infty t^{s-1} \exp(-t) \dx{t}.
\end{align*}
Using the facts that $R^\alpha \to \infty$ as $T \to \infty$ and $\Gamma(s,x)/(x^{s-1} e^{-x}) \to 1$ as $x \to \infty$, we have
\begin{align*}
  \lim_{T \to \infty} T \int_{A_T} k(\x)\dx{\mu}(\x) &\lesssim \lim_{T \to \infty} T \, \Gamma\left(\frac{2p+d-\alpha}{\alpha} + 1, R^\alpha\right) = \lim_{T \to \infty} T \left(R^\alpha\right)^{(2p+d-\alpha)/\alpha} \exp(-R^\alpha)  \\
&= \lim_{T \to \infty} T^{1 + (2p+d-\alpha)/(2p)} \exp(-T^{\alpha/2p}) = 0,
\end{align*}
confirming that this $(\mu, \rho_{\mathrm{ind}},V,\mathcal L)$ is relatively admissible.
\end{proof}

\section{Technical Lemmas}
\label{app:lemmas}

\subsection{Proof of Lemma \ref{lem:quant-sample-complexity}: Quantitative sample complexity}
\label{app:quant-sample-complexity}
\quantSampleComplexity*
\begin{proof}
Set
\[
    A
    \coloneqq
    \frac{Q}{\sigma^2}\log\left(\frac{4N}{\xi}\right).
\]
By Markov's inequality,
\[
    p_J
    =
    \rho\left(
        (\cos\gamma)^{-1}\geq\sqrt J
    \right)
    \leq
    C_rJ^{-r/2},
\]
and hence
\[
    AJp_J
    \leq
    AC_rJ^{-(r-2)/2}.
\]
Choose
\[
    J
    =
    \left(\frac{14 AC_r}{\zeta}\right)^{2/(r-2)},
\]
with $\zeta > 0$ such that 
\[
    14 AJp_J
    \leq
    \zeta.
\]
Hence, choosing $\zeta = \frac{\xi}{\alpha}$ where $\alpha\geq2$ satisfies $\zeta \in \left(0, \log\left(\frac{2}{2-\xi}\right)\right)$ will satisfy the 
upperbound requirement on $A J p_J$ of \Cref{lemma:admissible-M-window}.
Decreasing $\zeta$ further, i.e. increasing $\alpha$, increases $J$ allowing us to satisfy the lowerbound 
requirements of $A J$ and $J$.

Consequently, the relative finite-sample estimate holds for some
\[
    M
    \asymp
    AJ.
\]
Substituting the chosen value of \(J\) gives
\[
    M
    =
    O\left(
        A\left(\frac{AC_r}{\xi}\right)^{2/(r-2)}
    \right)
    =
    O\left(
        \left(\frac{C_r}{\xi}\right)^{2/(r-2)}
        A^{r/(r-2)}
    \right),
\]
which proves the result.
\end{proof}

\subsection{Proof of Lemma \ref{lem:real-preserving-representer}: Real-preserving representer}
\label{app:real-preserving-representer}
\realPreservingRepresenter*
\begin{proof}
Write
\[
    L=a+ib,
    \qquad
    a=\frac{L+\overline L}{2},
    \qquad
    b=\frac{L-\overline L}{2i}.
\]
Since \(V\) is closed under conjugation, \(a,b\in V_{\mathbb R}\).  If
\(\mathcal L\) is real-preserving, then for every \(v\in V_{\mathbb R}\),
\[
    \mathcal L(v)
    =
    \langle L,v\rangle
    =
    \int_D a(x)v(x)\,d\mu(x)
    -
    i\int_D b(x)v(x)\,d\mu(x)
\]
is real.  Hence,
\[
    \int_D b(x)v(x)\,d\mu(x)=0,
    \qquad v\in V_{\mathbb R}.
\]
Taking \(v=b\) gives \(\|b\|^2=0\).  Thus, \(b=0\) in \(L^2_\mu(D)\), so
\(L\) is real-valued as an element of \(L^2_\mu(D)\).

Conversely, if \(L\) is real-valued and \(v\in V_{\mathbb R}\), then
\[
    \mathcal L(v)
    =
    \langle L,v\rangle
    =
    \int_D L(x)v(x)\,d\mu(x)
    \in \mathbb R .
\]
Therefore, \(\mathcal L\) is real-preserving.
\end{proof}

\subsection{Proof of Lemma \ref{lemma:mixed-sampling-relative-admissibility}: Mixed sampling relative admissibility}
\label{app:mixed-sampling-relative-admissibility}
\mixedSamplingRelativeAdmissibility*
\begin{proof}
Let
\[
    E_\epsilon
    \coloneqq
    \{x\in D:\cos\gamma(x)\leq\epsilon\}.
\]
Since \(L\in L^2_\mu(D)\) and
\(\mathsf W^{-1}\in L^2_\mu(D)\), the Cauchy--Schwarz inequality gives
\[
    \left\|\frac{L}{\mathsf W}\right\|_{L^1_\mu(D)}
    \leq
    \|L\|_{L^2_\mu(D)}
    \|\mathsf W^{-1}\|_{L^2_\mu(D)}
    <\infty,
\]
so the mixed measure is well-defined. Moreover,
\eqref{eq:mixed-sampling-density} gives
\(Q_\theta\leq N/\theta<\infty\). Write
\[
    \rho_\theta
    =
    \theta\rho_{\mathrm{ind}}
    +
    (1-\theta)\rho_{\mathcal L,\mathsf W},
    \quad\text{where}\quad
    \dx\rho_{\mathcal L,\mathsf W}(x)
    =
    \frac{|L(x)|/\mathsf W(x)}
    {\|L/\mathsf W\|_{L^1_\mu(D)}}\,\dx\mu(x).
\]
Using
\(
    |L(x)|
    =
    \|\mathcal L\|_{V^*}\sqrt{k(x)}\cos\gamma(x),
\)
we obtain
\begin{align*}
    \rho_{\mathcal L,\mathsf W}(E_\epsilon)
    &\leq
    \frac{
        \epsilon\|\mathcal L\|_{V^*}
    }{
        \|L/\mathsf W\|_{L^1_\mu(D)}
    }
    \int_{E_\epsilon}
        \frac{\sqrt{k(x)}}{\mathsf W(x)}
    \,\dx\mu(x) \\
    &\leq
    \frac{
        \epsilon\|\mathcal L\|_{V^*}
        \|\mathsf W^{-1}\|_{L^2_\mu(D)}
    }{
        \|L/\mathsf W\|_{L^1_\mu(D)}
    }
    \left(
        \int_{E_\epsilon}k(x)\,\dx\mu(x)
    \right)^{1/2} \\
    &=
    \frac{
        \epsilon\|\mathcal L\|_{V^*}
        \sqrt N\,
        \|\mathsf W^{-1}\|_{L^2_\mu(D)}
    }{
        \|L/\mathsf W\|_{L^1_\mu(D)}
    }
    \rho_{\mathrm{ind}}(E_\epsilon)^{1/2}.
\end{align*}
By relative admissibility,
\(
    \rho_{\mathrm{ind}}(E_\epsilon)
    =
    o(\epsilon^2)
    \) for \( \epsilon\downarrow0,
\)
and hence
\[
    \rho_{\mathcal L,\mathsf W}(E_\epsilon)
    \lesssim
    \epsilon\,
    \rho_{\mathrm{ind}}(E_\epsilon)^{1/2}
    =
    o(\epsilon^2).
\]
Consequently,
\[
    \rho_\theta(E_\epsilon)
    =
    \theta\rho_{\mathrm{ind}}(E_\epsilon)
    +
    (1-\theta)
    \rho_{\mathcal L,\mathsf W}(E_\epsilon)
    =
    o(\epsilon^2),
\]
which proves relative admissibility. Finally, by \eqref{eq:mixed-sampling-density},
\[
    \tau_\theta^2(x)
    \geq
    (1-\theta)
    \frac{|L(x)|/\mathsf W(x)}
    {\|L/\mathsf W\|_{L^1_\mu(D)}},
\]
and therefore
\[
    \mathsf K_{\mathcal L,\rho_\theta,\mathsf W}(x)
    =
    \frac{|L(x)|}
    {\tau_\theta^2(x)\mathsf W(x)}
    \leq
    \frac{
        \|L/\mathsf W\|_{L^1_\mu(D)}
    }{
        1-\theta
    }.
\]
Thus,
\(\mathsf K_{\mathcal L,\rho_\theta,\mathsf W}(X)\) is uniformly bounded,
and hence sub-exponential, proving reference-weight concentration.
\end{proof}

\subsection{Proof of Lemma \ref{lemma:real-ls-weights}: Real least-squares weights}
\label{app:Real-ls-weights}
\realLeastSquaresWeights*
\begin{proof}
Since \(V\) is closed under complex conjugation, the real subspace
\[
    V_{\mathbb R}
    =
    \{v\in V: v=\overline v\}
\]
has complex span \(V\).  Choose an orthonormal basis
\(\{\psi_n\}_{n=1}^N\) of \(V_{\mathbb R}\).  Since the functions
\(\psi_n\) are real-valued, the corresponding evaluation matrix
\[
    (\mathbf \Psi)_{m,n}=\psi_n(x_m)
\]
is real.  Also, \(\mathbf W\) is real diagonal.  Therefore,
\[
    \mathbf G_\psi^{(M)}
    =
    \trans{\mathbf \Psi}\mathbf W\mathbf \Psi
\]
is a real symmetric matrix.  Since \(\mathcal L\) is real-preserving, the
moment vector
\[
    \bs\eta=\trans{(\eta_1,\ldots,\eta_N)},
    \qquad
    \eta_n=\mathcal L(\psi_n),
\]
belongs to \(\mathbb R^N\).  Hence, the weights computed in this real basis
are
\[
    \trans{\widehat{\mathbf w}}
    =
    \trans{\bs\eta}
    \left(\mathbf G_\psi^{(M)}\right)^{-1}
    \trans{\mathbf \Psi}\mathbf W
    \in \mathbb R^{1\times M}.
\]

It remains only to check that these are the same weights as those in
\eqref{eq:w-vector}.  Let \(\{\varphi_n\}_{n=1}^N\) be the complex
orthonormal basis used in \eqref{eq:w-vector}, and let \(\mathbf V\) be its
evaluation matrix.  Since both \(\{\varphi_n\}\) and \(\{\psi_n\}\) are
orthonormal bases of \(V\), there is a unitary matrix
\(\mathbf A\in\mathbb C^{N\times N}\) such that
\[
    \mathbf \Psi=\mathbf V\mathbf A .
\]
The corresponding moment vectors satisfy
\[
    \bs\eta=\trans{\mathbf A}\bs\ell,
    \qquad
    \trans{\bs\eta}=\trans{\bs\ell}\mathbf A,
\]
with $\bs{\ell}$ as in \eqref{eq:moment-vector}, and the Gramians satisfy
\[
    \mathbf G_\psi^{(M)}
    =
    \mathbf \Psi^*\mathbf W\mathbf \Psi
    =
    \mathbf A^*\mathbf G^{(M)}\mathbf A .
\]
Therefore,
\[
\begin{aligned}
    \trans{\widehat{\mathbf w}}
    &=
    \trans{\bs\eta}
    \left(\mathbf G_\psi^{(M)}\right)^{-1}
    \mathbf \Psi^*\mathbf W  \\
    &=
    \trans{\bs\ell}\mathbf A
    \left(\mathbf A^*\mathbf G^{(M)}\mathbf A\right)^{-1}
    \mathbf A^*\mathbf V^*\mathbf W  \\
    &=
    \trans{\bs\ell}
    \left(\mathbf G^{(M)}\right)^{-1}
    \mathbf V^*\mathbf W
    =
    \trans{\mathbf w} .
\end{aligned}
\]
Thus, \(\mathbf w=\widehat{\mathbf w}\in\mathbb R^M\).
\end{proof}

\subsection{Proof of Lemma \ref{lemma:G-chernoff}: Empirical Gram Concentration}
\label{app:G-chernoff}
\gramianConcentration*
\begin{proof}
  The following proof is in essence from \cite{cohen_stability_2013,cohen_optimal_2017}. Write
    \begin{align}
        \label{eq:GM-summands}
        \mbf G^{(M)}
        =
        \sum_{m=1}^M \mbf X_m,
        \qquad
        \mbf X_m
        =
        \frac{1}{M}
        \frac{
            \overline{\bs\varphi(x_m)}
            \trans{\bs\varphi(x_m)}
        }{\tau^2(x_m)} .
    \end{align}
    The matrices \(\{\mbf X_m\}_{m=1}^M\) are independent, Hermitian, and
    positive semidefinite.  Moreover,
    \[
        \E[\mbf X_m]=\frac{1}{M}\mbf I
    \]
    and
    \[
        \|\mbf X_m\|_2
        =
        \frac{1}{M}
        \frac{\|\bs\varphi(x_m)\|_2^2}{\tau^2(x_m)}
        =
        \frac{1}{M}
        \frac{k(x_m)}{\tau^2(x_m)}
        \leq
        \frac{Q}{M}
    \]
    almost surely.  Applying the matrix Chernoff bound
    \cite[Theorem~1.1]{tropp_user-friendly_2012} gives
    \begin{align*}
        \Prob\left[
            \lambda_{\min}(\mbf G^{(M)})<1-\delta
        \right]
        &\leq
        N\exp\left(-\frac{M c_-}{Q}\right),\\
        \Prob\left[
            \lambda_{\max}(\mbf G^{(M)})>1+\delta
        \right]
        &\leq
        N\exp\left(-\frac{M c_+}{Q}\right),
    \end{align*}
    where
    \[
        c_-
        =
        \delta+(1-\delta)\log(1-\delta),
        \qquad
        c_+
        =
        -\delta+(1+\delta)\log(1+\delta).
    \]
    For \(\delta\in(0,1)\), \(c_+\leq c_-\) and
    \(c_+\ge \delta^2/3\).  Hence,
    \begin{align*}
        \Prob\left[
            \left\|\mbf G^{(M)}-\mbf I\right\|_2>\delta
        \right]
        &\leq
        2N\exp\left(-\frac{M c_+}{Q}\right)
        \leq
        2N\exp\left(-\frac{M\delta^2}{3Q}\right).
    \end{align*}
    The lower bound \eqref{eq:Ghat-sampling-criterion} makes the final
    expression at most \(\epsilon\).
\end{proof}

\subsection{Proof of Theorem \ref{thm:caratheodory}: Carathéodory's theorem}
\label{app:caratheodory}
\positiveCaratheodorySteinitzPruning*
\begin{proof}
With the notation above, exactness is equivalent to
\[
    \trans{\mathbf V}\mathbf w=\bs\ell.
\]
If some weight is zero, the corresponding node may be removed immediately.
We therefore assume that all active weights are strictly positive.

Since \(M>N\), there exists a nontrivial real null vector
\[
    \mathbf k=\trans{(k_1,\ldots,k_M)}\in\ker(\trans{\mathbf V}).
\]
Hence, for every \(\alpha\in\mathbb R\),
\begin{equation}
\label{eq:caratheodory_downdate}
    \trans{\mathbf V}(\mathbf w-\alpha\mathbf k)
    =
    \trans{\mathbf V}\mathbf w
    =
    \bs\ell.
\end{equation}
Thus perturbing the weights along \(\mathbf k\) preserves exactness.

Choose
\begin{equation}
\label{eq:mstar_alphastar}
    m_*=\argmin_{m\in[M]:\,k_m\neq0}
    \left|\frac{w_m}{k_m}\right|,
    \qquad
    \alpha_*=\frac{w_{m_*}}{k_{m_*}}.
\end{equation}
Then
\[
    (\mathbf w-\alpha_*\mathbf k)_{m_*}=0.
\]
We claim that all other entries remain nonnegative.  If \(k_{m_*}>0\), then
\(\alpha_*>0\).  For every \(m\) with \(k_m>0\), the minimality of
\(|w_{m_*}/k_{m_*}|\) gives
\[
    \alpha_*=\frac{w_{m_*}}{k_{m_*}}
    \le
    \frac{w_m}{k_m},
\]
and hence \(w_m-\alpha_*k_m\ge0\).  For every \(m\) with \(k_m<0\), one has
\[
    w_m-\alpha_*k_m
    =
    w_m+\alpha_*|k_m|
    >
    0.
\]
The case \(k_{m_*}<0\) is symmetric.  Therefore,
\[
    \mathbf w-\alpha_*\mathbf k\ge0.
\]

By \eqref{eq:caratheodory_downdate}, the perturbed rule remains exact on
\(V\), and by construction at least one active weight has been set to zero.
Removing the zero-weight node gives a nonnegative exact rule with one fewer
active node.  Repeating this procedure until at most \(N\) active weights
remain gives \eqref{eq:L-exact-compressed}.
\end{proof}

\subsection{Integral Representer}
\label{app:integral-representer}
\begin{lemma}
\label{lem:integral-representer}
If \(\mathcal L\) is the integral functional $\mathcal L(v) = \int v d \mu $,
\(\mu\) is a probability measure, and \(1\in V\), then
\[
    L\equiv 1,
    \qquad
    L_{\inf}=1.
\]
\end{lemma}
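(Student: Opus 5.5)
The plan is to identify $L$ directly from the defining property of the Riesz representer on $V$. By \eqref{eq:L-integral}, $L\in V$ is characterized as the unique element of $V$ with $\langle L,u\rangle=\mathcal L(u)$ for all $u\in V$. Since $1\in V$, it suffices to show that the constant function $1$ satisfies this identity. For $u\in V$ we have
\[
    \langle 1,u\rangle=\int_D \overline{1}\,u(x)\,\dx\mu(x)=\int_D u(x)\,\dx\mu(x)=\mathcal L(u).
\]
Here $\mu$ being a probability measure guarantees $1\in L^2_\mu(D)$ with $\|1\|=1$, and also that the integral functional is bounded on $V$, by Cauchy--Schwarz: $|\mathcal L(u)|\le\|1\|\,\|u\|$. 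Uniqueness of the Riesz representer in the finite-dimensional Hilbert space $V$ then gives $L=1$ in $L^2_\mu$. Because $1\in V$ is realized pointwise by the fixed representative convention of \Cref{sssec:preliminaries-V}, we may take $L\equiv1$ pointwise. This requires a short check that the representative $\sum_n \overline{\ell_n}\varphi_n$ of \eqref{eq:L-def} agrees with $1$ everywhere, not merely $\mu$-a.e.; if it agreed only a.e., we would simply replace it by the constant representative.

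Next, I would compute $\|\mathcal L\|_{V^*}=\|L\|=\|1\|=\mu(D)^{1/2}=1$ using \eqref{eq:L-integral}. Substituting into \eqref{eq:Linf} then gives
\[
    L_{\inf}=\inf_{x\in D}\frac{|L(x)|}{\|\mathcal L\|_{V^*}}=\inf_{x\in D}\frac{1}{1}=1.
\]

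The argument is essentially immediate. The only step requiring care is the pointwise-versus-almost-everywhere issue in identifying $L\equiv1$ on all of $D$, rather than merely $\mu$-a.e. This matters because $L_{\inf}$ is defined as a pointwise infimum. I would handle it by noting that $L$ and $1$ are both elements of the pointwise realization of $V$, and that their difference is a function in $V$ of zero $L^2_\mu$-norm. Under the paper's identification of $V$ with pointwise functions via a fixed basis, the coefficient vector of this difference vanishes, so the two functions coincide everywhere.
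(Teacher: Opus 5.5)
Your proof is correct, and it differs from the paper's mainly in being coordinate-free.

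The paper chooses the orthonormal basis so that $\varphi_1\equiv1$, which is legitimate since $\|1\|=1$. It computes $\ell_n=\int_D\varphi_n\,d\mu=\langle\varphi_1,\varphi_n\rangle=\delta_{1,n}$ and reads off $L=\sum_n\overline{\ell_n}\varphi_n=\varphi_1\equiv1$ and $\|\mathcal L\|_{V^*}=1$. Everywhere-equality is then automatic, because the representative of $\varphi_1$ is taken to be the constant. The argument does, however, implicitly use the remark after \eqref{eq:L-def} that $L$ does not depend on the chosen orthonormal basis.

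You instead check the Riesz identity $\langle 1,u\rangle=\mathcal L(u)$ directly and get $L=1$ in $L^2_\mu$ from uniqueness. You then upgrade this to equality on all of $D$: a function $\sum_n c_n\varphi_n$ with zero $L^2_\mu$-norm has $c=0$ by orthonormality, so it vanishes identically. This works for whatever basis was fixed. It also isolates the a.e.-versus-pointwise issue, which matters because $L_{\inf}$ is a pointwise infimum.

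Both arguments need the constant $1$ to belong to the pointwise realization of $V$. Your aside about ``replacing'' $L$ by the constant representative should therefore be read as re-fixing the basis representatives so that this holds, which is exactly what the paper's choice $\varphi_1\equiv1$ does. It should not be read as altering $L$ alone, since $L$ is defined as $\sum_n\overline{\ell_n}\varphi_n$.
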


\begin{proof}
Since \(1\in V\) and \(\mu\) is a probability measure, choose the
orthonormal basis so that \(\varphi_1\equiv1\).  Then,
\[
    \ell_n
    =
    \mathcal L(\varphi_n)
    =
    \int_D \varphi_n(x)\,d\mu(x)
    =
    \int_D \overline{\varphi_1(x)}\varphi_n(x)\,d\mu(x)
    =
    \delta_{1,n}.
\]
Hence,
\[
    L(x)
    =
    \sum_{n=1}^N \overline{\ell_n}\varphi_n(x)
    =
    1.
\]
The conclusion follows.
\end{proof}

\subsection{Weighted Lebesgue inequality}
\label{app:lebesgue}

\begin{lemma} 
\label{lemma:lebesgue} 
Suppose that
\[
V\subset \mathcal{B}^\infty_{\mathsf W}(D)
\qquad\text{and}\qquad
\mathcal L_{\mathsf W}\in \bigl(\mathcal{B}^\infty_{\mathsf W}(D)\bigr)^*,
\qquad
\mathcal L_{\mathsf W}|_V=\mathcal L,
\]
with
\[
\|\mathcal L_{\mathsf W}\|_{\infty,\mathsf W}^*
\coloneqq
\sup_{\substack{u\in \mathcal{B}^\infty_{\mathsf W}(D)\\ \|u\|_{\infty,\mathsf W}\leq1}}
|\mathcal L_{\mathsf W}(u)|.
\]
If \(\mathcal Q_M\) is exact on \(V\), then, for every \(u\in \mathcal{B}^\infty_{\mathsf W}(D)\), 
\[ 
|\mathcal Q_M(u)-\mathcal L_{\mathsf W}(u)| \leq \left( \kappa_{\mathsf W}(\mathcal Q_M) + \|\mathcal L_{\mathsf W}\|_{\infty,\mathsf W}^* \right) e_{V,\infty,\mathsf W}(u),
\] 
where 
\[ 
e_{V,\infty,\mathsf W}(u) = \inf_{v\in V} \|u-v\|_{\infty,\mathsf W}. 
\] 
\end{lemma}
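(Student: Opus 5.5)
The argument is the standard Lebesgue-type one: insert an arbitrary element of \(V\), use exactness on \(V\), and then apply the two operator norms. Fix \(u\in\mathcal{B}^\infty_{\mathsf W}(D)\) and an arbitrary \(v\in V\). By the hypothesis \(V\subset\mathcal{B}^\infty_{\mathsf W}(D)\), the difference \(u-v\) lies in \(\mathcal{B}^\infty_{\mathsf W}(D)\), so both \(\mathcal Q_M\) and \(\mathcal L_{\mathsf W}\) may be applied to it. Exactness gives \(\mathcal Q_M(v)=\mathcal L(v)\), and the extension property gives \(\mathcal L(v)=\mathcal L_{\mathsf W}(v)\). By linearity of both functionals,
\[
    \mathcal Q_M(u)-\mathcal L_{\mathsf W}(u)
    =
    \mathcal Q_M(u-v)-\mathcal L_{\mathsf W}(u-v).
\]

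Next I would bound each term by its operator norm. By the definition \eqref{eq:condition-number}, \(\kappa_{\mathsf W}(\mathcal Q_M)\) is the operator norm of \(\mathcal Q_M\) on \(\mathcal{B}^\infty_{\mathsf W}(D)\). Homogeneity then gives \(|\mathcal Q_M(w)|\leq\kappa_{\mathsf W}(\mathcal Q_M)\|w\|_{\infty,\mathsf W}\) for every \(w\) in the space, and likewise \(|\mathcal L_{\mathsf W}(w)|\leq\|\mathcal L_{\mathsf W}\|^*_{\infty,\mathsf W}\|w\|_{\infty,\mathsf W}\). If \(\|w\|_{\infty,\mathsf W}>0\), these follow by rescaling \(w\) to unit norm. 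If \(\|w\|_{\infty,\mathsf W}=0\), then \(w\equiv0\) pointwise, because \(\mathsf W>0\) everywhere, and both functionals vanish on the zero function. The triangle inequality with \(w=u-v\) then gives
\[
    |\mathcal Q_M(u)-\mathcal L_{\mathsf W}(u)|
    \leq
    \left(\kappa_{\mathsf W}(\mathcal Q_M)+\|\mathcal L_{\mathsf W}\|^*_{\infty,\mathsf W}\right)\|u-v\|_{\infty,\mathsf W}.
\]

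Finally, the left-hand side does not depend on \(v\), so taking the infimum over \(v\in V\) yields the stated bound with \(e_{V,\infty,\mathsf W}(u)\).

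The only point needing care is the zero-norm case in the homogeneity step. It is harmless here because \(\mathcal{B}^\infty_{\mathsf W}(D)\) consists of pointwise-defined functions rather than a.e. equivalence classes, and \(\mathsf W\) is strictly positive. The same observation guarantees that \(\mathcal Q_M\), which uses point evaluation, is well-defined on this space. No other step presents an obstacle. If \(\kappa_{\mathsf W}(\mathcal Q_M)\) happens to be infinite, the inequality holds trivially.
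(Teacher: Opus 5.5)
Your proposal is correct and follows the paper's proof: insert an arbitrary $v\in V$, use exactness together with $\mathcal L_{\mathsf W}|_V=\mathcal L$, bound both terms by their operator norms on $\mathcal{B}^\infty_{\mathsf W}(D)$, and take the infimum over $v$. Your extra care with homogeneity in the zero-norm case fills in a step the paper leaves implicit. The $\kappa_{\mathsf W}(\mathcal Q_M)=\infty$ remark is vacuous, since $\kappa_{\mathsf W}(\mathcal Q_M)\leq\sum_m |w_m|/\mathsf W(x_m)<\infty$ whenever $\mathsf W>0$.
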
 
\begin{proof} 
For any \(v\in V\), exactness and the triangle inequality give 
\[ 
\begin{aligned} 
    |\mathcal Q_M(u)-\mathcal L_{\mathsf W}(u)| &= |\mathcal Q_M(u-v)-\mathcal L_{\mathsf W}(u-v)| \\
    &\leq |\mathcal Q_M(u-v)| + |\mathcal L_{\mathsf W}(u-v)| \\
    &\leq \left( \kappa_{\mathsf W}(\mathcal Q_M) + \|\mathcal L_{\mathsf W}\|_{\infty,\mathsf W}^* \right) \|u-v\|_{\infty,\mathsf W}.
\end{aligned} 
\] 
Here we used the definition of \(\kappa_{\mathsf W}(\mathcal Q_M)\). Taking the infimum over \(v\in V\) proves the result. 
\end{proof}

\end{document}